\newtheorem{theorem}[equation]{Theorem}
\newtheorem{proposition}[equation]{Proposition}
\newtheorem{lemma}[equation]{Lemma}
\newtheorem{corollary}[equation]{Corollary}
\newtheorem{conjecture}[equation]{Conjecture}
\theoremstyle{definition}
\newtheorem{defn}[equation]{Definition}
\newtheorem{definition}[equation]{Definition}
\newtheorem{eg}[equation]{Example}
\newtheorem{rmk}[equation]{Remark}
\newtheorem{remark}[equation]{Remark}
\newcommand{\epair}{(\emptyset; \emptyset)}
\newcommand{\C}{\mathbb{C}}
\newcommand{\mc}{\mathcal}
\title{Circular Planar Electrical Networks II: Positivity Phenomena}
\author{Joshua Alman, Carl Lian, and Brandon Tran\footnote{Department of Mathematics, Massachusetts Institute of Technology. \href{mailto:jalman@mit.edu}{jalman@mit.edu},  \href{mailto:clian@math.mit.edu}{clian@math.mit.edu}, \href{mailto:btran115@mit.edu}{btran115@mit.edu}}}
\begin{document}

\maketitle

\begin{abstract} 

Curtis-Ingerman-Morrow characterize response matrices for circular planar electrical networks as symmetric square matrices with row sums zero and non-negative circular minors. In this paper, we study this positivity phenomenon more closely, from both algebraic and combinatorial perspectives. Extending work of Postnikov, we introduce electrical positroids, which are the sets of circular minors which can simultaneously be positive in a response matrix. We give a self-contained axiomatic description of these electrical positroids. In the second part of the paper, we discuss a naturally arising example of a Laurent phenomenon algebra, as studied by Lam-Pylyavskyy. We investigate the clusters in this algebra, building off of initial work by Kenyon-Wilson, using an analogue of weak separation, as was originally introduced by Leclerc-Zelevinsky.

\end{abstract}

\tableofcontents

\section{Introduction}

\numberwithin{equation}{section}

Circular planar electrical networks were studied by Curtis-Ingerman-Morrow \cite{curtis} and de Verdi\'{e}re-Gitler-Vertigan \cite{french}. Associated to any circular planar electrical network of order $n$ is its $n\times n$ \textit{response matrix}, and response matrices are characterized in \cite[Theorem 4]{curtis} as the symmetric matrices with row sums equal to zero and \textit{circular minors} non-negative. Furthermore, the circular minors which are strictly positive can be identified combinatorially using \cite[Lemma 4.2]{curtis}. This positivity condition is of particular interest to us, and the goal of this paper is to investigate the connections between electrical networks and other positivity phenomena in the literature. Until now, the combinatorial properties of these response matrices have remained largely unstudied, despite their inherently combinatorial descriptions.

A natural question that arises is: which sets of circular minors can be positive, while the others are zero? It is clear (for example, from the Grassmann-Pl\"{u}cker relations) that one cannot construct response matrices with arbitrary sets of positive circular minors. Postnikov \cite{sasha} studied a similar question in the \textit{totally nonnegative Grassmannian}, as follows: for $k\times n$ matrices $A$, with $k<n$ and all $k\times k$ minors nonnegative, which sets (in fact, matroids) of $k\times k$ minors can be the set of positive minors of $A$? These sets, called \textit{positroids} by Knutson-Lam-Speyer \cite{knutson}, were found in \cite{sasha} to index many interesting combinatorial objects. Two of these objects, plabic graphs and alternating strand diagrams, are highly similar to circular planar electrial networks and medial graphs, respectively, which we study in this paper. Our introduction of \textit{electrical positroids} is therefore a natural extension of the theory of positroids. We give a novel axiomatization of electrical positroids, motivated by the Grassmann-Pl\"{u}cker relations, and prove the following:

\newtheorem*{introcircposthm}{Theorem \ref{circposthm}}
\begin{introcircposthm}
A set $S$ of circular pairs is the set of positive circular minors of a response matrix if and only if $S$ is an electrical positroid.
\end{introcircposthm}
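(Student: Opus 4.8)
The plan is to prove the two implications separately. The forward implication — that the positive circular minors of a response matrix form an electrical positroid — should follow formally from quadratic relations among circular minors, while the reverse implication, producing a response matrix that realizes a prescribed electrical positroid, is where the real work lies.

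\textbf{Forward direction.} Let $M$ be a response matrix and set $S = \{(P;Q) : M(P;Q) > 0\}$; by \cite[Theorem 4]{curtis} every circular minor of $M$ is nonnegative, so $S$ is a well-defined set of circular pairs. Each electrical positroid axiom has the shape ``if certain circular pairs lie in $S$, then some further pair, or at least one pair from a specified list, lies in $S$,'' and each should be the sign-level shadow of an electrical analogue of a three-term Grassmann--Pl\"ucker relation, i.e.\ an identity
\[
M(P_0;Q_0)\,M(P_1;Q_1) \;=\; \sum_{i} M(P_i';Q_i')\,M(P_i'';Q_i'')
\]
in which all minors that appear are circular. Such an identity together with the nonnegativity of all circular minors of $M$ forces the left side to be positive exactly when some product on the right is positive, which is the content of the corresponding axiom. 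Concretely I would first assemble the needed quadratic identities — these are classical for symmetric matrices with zero row sums and should be exactly the relations motivating the axioms — and then check the axioms one at a time against the sign pattern of $M$.

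\textbf{Reverse direction.} Here I would induct on the order $n$, the cases $n \le 2$ being immediate. Given an electrical positroid $S$ on the cyclically ordered boundary $\{1,\dots,n\}$, restrict it to the circular pairs that do not involve the vertex $n$, obtaining a set $S'$. The first step is to verify that $S'$ is an electrical positroid on $\{1,\dots,n-1\}$: each axiom needed for $S'$ involves only pairs avoiding $n$ and so should be among, or deducible from, the axioms holding for $S$. By the inductive hypothesis $S'$ is the set of positive circular minors of a response matrix $M'$, realized by a circular planar electrical network $\Gamma'$. The second step is to reattach the vertex $n$: one builds from $\Gamma'$ a network $\Gamma$ on $\{1,\dots,n\}$, adjoining $n$ by some configuration of boundary edges and/or a boundary spike with conductances yet to be chosen, whose response matrix $M$ has set of positive circular minors exactly $S$. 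The pairs of $S$ that involve $n$ are tied, through the quadratic relations above, both to $S'$ and to one another, and the axioms governing those pairs should pin down an admissible combinatorial type for the attachment together with a nonempty open set of admissible conductances; the connection-set description of positive circular minors in \cite[Lemma 4.2]{curtis} is the dictionary translating ``$(P;Q) \in S$'' into ``$\Gamma$ carries the appropriate family of vertex-disjoint paths.'' One then checks that for a sufficiently generic choice no circular minor through $n$ that should vanish becomes positive and none that should be positive vanishes.

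\textbf{Main obstacle.} The crux is this reattachment step: showing that the electrical positroid axioms are exactly strong enough to guarantee a consistent and realizable way to hang the new boundary vertex onto $\Gamma'$ without creating spurious connections. I expect this to require a careful case analysis organized by the ``shape'' of the set of pairs of $S$ through $n$ — for example, first treating the maximal electrical positroid, realized by a well-connected network as in \cite{curtis}, and then showing that every other electrical positroid is obtained from a larger one by sending a single conductance to $0$ or $\infty$ (deleting or contracting one edge), each such move being the geometric counterpart of a codimension-one degeneration of the sign pattern allowed by the axioms. An alternative closer to Postnikov's approach would skip the induction and build a medial graph or network directly from the combinatorial data of $S$; that route replaces the reattachment analysis by the problem of making such a construction canonical, which I take to be comparably delicate.
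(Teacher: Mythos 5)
Your forward direction is essentially the paper's: the first six axioms are read off from the Grassmann--Pl\"ucker relations (\ref{P1}), (\ref{P2}) together with nonnegativity of all circular minors. One caveat: the Subset Axiom (\ref{axsub}) is \emph{not} the sign-level shadow of any such quadratic identity; in the paper it comes from the connection characterization, Theorem \ref{circularminorspositive}(b) (a positive minor means a family of vertex-disjoint paths, and dropping one path keeps the rest). So your blanket claim that every axiom follows from three-term relations needs that extra input, but this is a small repair.

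The reverse direction has a genuine gap. Two problems with the reattachment step. First, conductances cannot help you: by Theorem \ref{circularminorspositive}(b), which circular minors of the response matrix are positive depends only on the connection structure of the underlying graph, not on the (positive) conductance values, so ``a nonempty open set of admissible conductances'' and ``sufficiently generic choice'' do no work --- the whole problem is combinatorial. Second, and fatally, the attachment repertoire is too weak and the induction gives you no control over the interior of $\Gamma'$, which is exactly what the connections through $n$ depend on. Take $S$ to be the set of all circular pairs on $[n]$: then $S'$ is all circular pairs on $[n-1]$, and the inductive hypothesis may hand you a critical $\Gamma'$ with $\binom{n-1}{2}$ edges; attaching $n$ by boundary edges and a single spike adds a bounded number of edges, while any graph realizing all connections on $[n]$ has at least $\binom{n}{2}$ edges (it reduces to a critical representative of the top cell by edge removals). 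More generally, if $\Gamma$ realizes $S$ and you delete the boundary vertex $n$, the leftover interior structure is invisible to $S'$ (it is redundant for connections inside $[n-1]$) yet is precisely what supports the connections through $n$; the realization of $S'$ supplied by induction need not contain it, and no argument is offered that some equivalent representative admits the required attachment. So the ``reattachment'' is not a deferred case analysis --- it is the entire theorem, and the proposed tools cannot carry it. The paper instead fixes $n$ and runs a maximal-counterexample argument: a maximal non-realizable electrical positroid $S_0$ must fail some boundary-edge or boundary-spike property (Lemma \ref{allBEPBSP}); enlarging $S_0$ to an electrical positroid $S_1$ with that property makes it realizable by a graph with a boundary edge or spike (Lemma \ref{graphBEPBSP}), and then a long combinatorial analysis (primary circular pairs, Lemmas \ref{PCPcont}--\ref{findclos}, and Appendix \ref{BSPcase}) shows that deleting that edge or contracting that spike yields a graph with connection set exactly $S_0$. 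Your closing remark that every electrical positroid should arise from a larger one by a single deletion or contraction is the right instinct --- it is essentially the paper's route --- but proving it is where all the work lies, and your sketch does not engage with it.
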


Another point of interest is that of \textit{positivity tests} for response matrices. In \cite{totalpos}, Fomin-Zelevinsky describe various positivity tests for \textit{totally positive matrices}: given an $n\times n$ matrix, there exist sets of $n^{2}$ minors whose positivity implies the positivity of all minors. These sets of minors are described combinatorially by \textit{double wiring diagrams}. Fomin-Zelevinsky later introduced \textit{cluster algebras} in \cite{fz1}, in part, to study similar positivity phenomena. In particular, their double wiring diagrams fit naturally within the realm of cluster algebras as manifestations of certain cluster algebra mutations.

In a similar way, we describe sets of $\binom{n}{2}$ minors of an $n\times n$ matrix $M$ whose positivity implies the positivity of all circular minors, that is, that $M$ is a response matrix for a top-rank (in $EP_{n}$, as defined in \cite[\S 3]{paper1}) electrical network. Some such sets were first described by Kenyon-Wilson \cite[\S 4.5.3]{kenyon}. However, these sets do not form clusters in a cluster algebra. Instead, they form clusters in a \textit{Laurent phenomenon (LP) algebra}, a notion introduced by Lam-Pylyavskyy in \cite{lp}. This observation leads to the last of our main theorems:

\newtheorem*{introlpalg}{Theorem \ref{lpalg}}
\begin{introlpalg}
There exists an LP algebra $\mc{LM}_{n}$, isomorphic to the polynomial ring on $\binom{n}{2}$ generators, with an initial seed $\mc{D}_{n}$ of diametric circular minors. $\mc{D}_{n}$ is a positivity test for circular minors, and furthermore, all ``Pl\"{u}cker clusters'' in $\mc{LM}_{n}$, that is, clusters of circular minors, are positivity tests.
\end{introlpalg}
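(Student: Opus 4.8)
The plan is to treat the three assertions in turn: construct the seed, identify the resulting LP algebra with a polynomial ring, and verify the positivity-test statements. For the first part I would write down $\mathcal{D}_n$ explicitly, taking as cluster variables the $\binom{n}{2}$ diametric circular minors and, for each such minor $\Delta$, defining its exchange polynomial $F_\Delta$ from the appropriate (two- or three-term) electrical Grassmann--Pl\"{u}cker relation --- the same relations underlying the electrical positroid axioms behind Theorem~\ref{circposthm} --- solved so that $F_\Delta$ involves only the remaining diametric minors. One then checks that $\mathcal{D}_n$ is a valid LP seed in the sense of Lam--Pylyavskyy \cite{lp}: each $F_\Delta$ is irreducible and not divisible by any cluster variable, and the coprimality conditions needed for mutation hold. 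I expect this irreducibility/coprimality bookkeeping to be the first technical point, though it is made tractable by the very simple (essentially binomial) shape of the electrical Pl\"{u}cker relations.

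For the second part, recall that the space $X$ of $n\times n$ symmetric matrices with zero row sums is an affine space of dimension $\binom{n}{2}$ in its off-diagonal entries. By Kenyon--Wilson \cite{kenyon} the diametric minors furnish polynomial coordinates on $X$ (an explicit, subtraction-free parametrization of top-rank response matrices in terms of these minors), so in particular the diametric minors are algebraically independent and $\mathbb{C}[\mathcal{D}_n]=\mathbb{C}[X]$. The LP algebra $\mathcal{LM}_n$ is thus squeezed between $\mathbb{C}[\mathcal{D}_n]$ and its Laurent ring, and to pin it down as the polynomial ring it suffices to show every cluster variable is a genuine polynomial, not merely a Laurent polynomial, in $\mathcal{D}_n$. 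I would prove this by induction along the LP mutation graph, controlling the denominator of each new variable $x'=F/x$ via the Laurent phenomenon of \cite{lp} together with the explicit form of the exchange polynomials inherited from the electrical Pl\"{u}cker relations.

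For the positivity-test statements, $\mathcal{D}_n$ is a positivity test because the Kenyon--Wilson parametrization exhibits every circular minor as a subtraction-free rational expression in the diametric minors, so positivity of $\mathcal{D}_n$ forces positivity of all circular minors, i.e.\ $M$ is the response matrix of a top-rank electrical network. To extend this to an arbitrary Pl\"{u}cker cluster $C$, I would first use the analogue of weak separation developed in the rest of the paper (modeled on Leclerc--Zelevinsky and building on \cite{kenyon}) to show that the Pl\"{u}cker clusters of $\mathcal{LM}_n$ are exactly the maximal \emph{weakly separated} collections of circular pairs, and that any two are connected by mutations passing only through Pl\"{u}cker clusters. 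Then I would check the single-step statement: if $C$ is a positivity test and $C'=\mu_\Delta(C)$ is again a Pl\"{u}cker cluster, then $C'$ is a positivity test, because the exchange relation $\Delta\cdot\Delta' = F_\Delta$ has $F_\Delta$ a polynomial with nonnegative coefficients in the shared variables $C\cap C'$ --- this nonnegativity being precisely the sign content of the electrical Grassmann--Pl\"{u}cker relations --- so positivity of all of $C'$ forces $F_\Delta>0$, hence $\Delta = F_\Delta/\Delta'>0$, hence all of $C$ is positive and so are all circular minors. Induction along the connected mutation graph then gives the result for every Pl\"{u}cker cluster, and in particular shows every circular minor occurs in some such cluster.

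The main obstacle I anticipate is the combinatorial heart of the third part: proving that the Pl\"{u}cker clusters of $\mathcal{LM}_n$ coincide with the maximal weakly separated collections of circular pairs and, crucially, that this family is connected under mutation. This is the electrical counterpart of the nontrivial connectivity theorems for weakly separated collections in the Grassmannian, and it is what links the LP-algebra structure to the combinatorics of circular pairs; by contrast, the sign-positivity of the exchange polynomials and the LP-seed axioms, while also requiring proof, should follow fairly directly from the structure of the electrical Pl\"{u}cker relations already used for Theorem~\ref{circposthm}.
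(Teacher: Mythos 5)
Your first part (defining the seed from the electrical Grassmann--Pl\"{u}cker exchanges and checking the LP-seed conditions) matches the paper's construction of $\mc{LM}_n$, but the second part rests on a false identity. You claim $\mathbb{C}[\mc{D}_n]=\mathbb{C}[X]$, i.e.\ that the diametric minors generate the polynomial ring in the off-diagonal entries. They are algebraically independent, but they do not generate: already for $n=4$, $\mc{D}_4$ consists of $M_{12},M_{23},M_{13},M_{24}$ and the two diametric $2\times 2$ minors, so the degree-one part of $\mathbb{C}[\mc{D}_4]$ is spanned by those four entries and $M_{14}=\bigl(\det M(1,2;4,3)+M_{13}M_{24}\bigr)/M_{23}$ is only a rational, not polynomial, expression in $\mc{D}_4$. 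This also kills your proposed induction that ``every cluster variable is a polynomial in $\mc{D}_n$'': $M_{14}$ is itself a cluster variable (every $1\times1$ minor is solid and is reached by mutations of type (\ref{P1})). The theorem identifies $\mc{LM}_n$ with the polynomial ring in the \emph{entries} of $M$, and both containments need work the proposal omits: the paper shows every cluster variable is a polynomial in the entries via the LP analogue of the Fomin--Pylyavskyy coprimality criterion, where the delicate case is mutation at limiting minors, which produces non-minor cluster variables and requires the explicit computation of Proposition \ref{mutlimit}; and it shows every off-diagonal entry actually occurs as a cluster variable via the classification of solid seeds (Lemma \ref{allconsec}, Corollary \ref{symallconsec}) transported to $\mc{LM}_n$ through the double-cover cluster algebra $\mc{CM}_n$ and the symmetrization map (Lemma \ref{isom}) --- a device you would also need, since for $n\ge 5$ the exchange graph $U_n$ admits no quiver orientation.

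The third part has a second genuine gap: you route ``all Pl\"{u}cker clusters are positivity tests'' through the claims that Pl\"{u}cker clusters are exactly the maximal weakly separated collections and that they are connected by mutations passing only through Pl\"{u}cker clusters. Those are precisely Conjectures \ref{wsconj} and \ref{getall}, which the paper leaves open (verified only for $n\le 6$, with only weak forms such as Propositions \ref{pluckerpreservesws} and \ref{wsconsec} proved), so as written this step is not a proof. It is also unnecessary: a cluster of $\mc{LM}_n$ is by definition reachable from the initial seed by \emph{some} mutation sequence, not necessarily through clusters of minors, and because every exchange polynomial in $\mc{CM}_n$ has nonnegative coefficients, positivity propagates backwards along an arbitrary mutation path, so each diametric minor is a subtraction-free rational expression in the variables of any cluster whatsoever. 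Combined with the fact that $\mc{D}_n$ is a positivity test --- which the paper proves directly by the two-stage induction of Proposition \ref{initialseed}, whereas you outsource it to Kenyon--Wilson without verifying their sets coincide with $\mc{D}_n$ --- this yields Lemma \ref{LPpos} with no weak-separation or connectivity input at all. Your single-step positivity argument is sound, but conditioning it on an open connectivity conjecture makes the easiest part of the theorem depend on the hardest open problem in the paper.
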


In proving Theorem \ref{lpalg}, we find that $\mc{LM}_{n}$ is, in a sense, ``double-covered'' by a cluster algebra $\mc{CM}_{n}$ that behaves very much like $\mc{LM}_{n}$ when we restrict to certain types of mutations. Further investigation of the clusters leads to an analogue of weak separation, as studied by Oh-Speyer-Postnikov \cite{osp} and Scott \cite{scott}. Conjecturally, the ``Pl\"{u}cker clusters,'' of $\mc{LM}_{n}$ correspond exactly to the maximal pairwise weakly separated sets of circular pairs. Furthermore, we conjecture that these maximal pairwise weakly separated sets are related to each other by mutations corresponding to the Grasmann-Pl\"{u}cker relations. While we establish several weak forms of the conjecture, the general statement remains open.

The roadmap of the paper is as follows. We begin by briefly reviewing terminology and known results in \S\ref{defs}, referring the reader to \cite[\S2]{paper1} for a more detailed exposition. In \S\ref{circpos}, we motivate and introduce electrical positroids, and prove Theorem \ref{circposthm}. In \S\ref{lpsec}, using positivity tests as a springboard, we construct $\mc{LM}_{n}$ and prove Theorem \ref{lpalg}, and conclude by establishing weak forms of Conjecture \ref{wsconj}, which relates the clusters of $\mc{LM}_{n}$ to positivity tests and our new analogue of weak separation.

\numberwithin{equation}{section}

\section{Preliminaries: Circular Planar Electrical Networks and Response Matrices}\label{defs}

We recommend that the reader begin by reviewing the definitions and notational conventions from \cite[\S 2]{paper1}. For convenience, we re-state some of the more important definitions and results that we will use in this paper.

\begin{defn} A \textbf{circular planar graph} $\Gamma$ is a planar graph embedded in a disk $D$. $\Gamma$ is allowed to have self-loops and multiple edges, and has at least one vertex on the boundary of $D$ - such vertices are called \textbf{boundary vertices}. A \textbf{circular planar electrical network} (or more simply an \textbf{electrical network} is a circular planar graph $\Gamma$, together with a \textbf{conductance map} $\gamma:E(\Gamma)\rightarrow\mathbb{R}_{>0}$. \end{defn}

Associated to an electrial network is its \textbf{response matrix}, which is obtained by imposing voltages at the boundary vertices and measuring the resulting boundary currents. More details are given in \cite[\S 2.1]{paper1}.

\begin{defn}
Two electrical networks $(\Gamma_{1},\gamma_{1}),(\Gamma_{2},\gamma_{2})$ are \textbf{equivalent} if they have the same response matrix. The equivalence relation is denoted by $\sim$.
\end{defn}

Recall also the \textbf{local equivalences}, described in detail in \cite{curtis} and \cite[\S 2.1]{paper1}: self-loop and spike removal, series and parallel edge replacement, and $Y$-$\Delta$ transformations. These local equivalences are sufficient to generate the equivalence relation $\sim$:

\begin{theorem}[{\cite[Th\'{e}or\`{e}me 4]{french}}]\label{localeqenough}
Two electrical networks are equivalent if and only if they are related by a sequence of local equivalences.
\end{theorem}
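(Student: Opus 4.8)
The plan is to prove the two directions separately. The ``if'' direction is a direct computation: each local equivalence preserves the response matrix. Writing the response matrix as the Schur complement $L_{BB} - L_{BI}L_{II}^{-1}L_{IB}$ of the weighted Laplacian $L$ of $(\Gamma,\gamma)$ with respect to the interior vertices, one checks case by case --- self-loop removal, spike removal, series and parallel reductions, and the $Y$-$\Delta$ transformation --- that the modified network has the same Schur complement; equivalently, one verifies directly from Kirchhoff's current law that each such local modification is invisible from the boundary.

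For the ``only if'' direction, suppose $(\Gamma_1,\gamma_1)$ and $(\Gamma_2,\gamma_2)$ share the response matrix $M$; the goal is to reduce each, by local equivalences, to a canonical form depending only on $M$. I would first bring in the medial graph, whose edges organize into ``wires'', together with the complexity measure given by the number of crossings of wires --- equivalently the number of lenses (regions bounded by two arcs of wires) plus the number of self-crossings. Call $(\Gamma,\gamma)$ \emph{critical} when its medial graph is lens-free and no wire crosses itself. The first key lemma is that a non-critical network admits a local equivalence strictly decreasing this complexity: by examining an innermost lens or self-crossing one identifies which local move --- a series or parallel reduction, a $Y$-$\Delta$ transformation, or the removal of a pendant spike or loop --- eliminates that crossing without creating new ones. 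Iterating, every network is equivalent to a critical one.

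The second key lemma is that two critical networks with the same response matrix are isomorphic as networks. This has two ingredients: (i) the underlying critical graph is determined by the \emph{connection data} of $M$, namely the set of circular pairs $(P;Q)$ for which the graph admits a family of vertex-disjoint paths joining $P$ to $Q$, which by \cite[Lemma 4.2]{curtis} is exactly the set of circular pairs whose circular minor in $M$ is nonzero; and (ii) the recovery theorem that, on a fixed critical graph, the conductances are uniquely determined by the response matrix. Granting these, let $\Gamma_1'$ and $\Gamma_2'$ be critical networks obtained from $\Gamma_1$ and $\Gamma_2$ by local equivalences; both have response matrix $M$, hence are isomorphic as networks, and so $\Gamma_1 \sim \Gamma_1' = \Gamma_2' \sim \Gamma_2$ is a chain of local equivalences.

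The main obstacle is the first key lemma of the ``only if'' direction: the combinatorial case analysis showing that an innermost lens or self-crossing of the medial graph always corresponds to an applicable local equivalence, together with the argument that the chosen move never increases the complexity --- the delicate points being the interaction of lenses with the boundary of the disk and the treatment of boundary wires. By contrast, the connection/minor dictionary and the recovery theorem for critical networks I would invoke directly from \cite{curtis} rather than reprove.
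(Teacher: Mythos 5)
The paper itself offers no proof of this statement: it is imported wholesale as Th\'{e}or\`{e}me 4 of de Verdi\`{e}re--Gitler--Vertigan \cite{french}, so your proposal can only be measured against the standard argument in the literature, which it follows in outline (easy direction by checking that each local move preserves the Schur complement; hard direction by reducing to critical networks via lens/self-crossing removal in the medial graph).

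There is, however, a genuine flaw in your second key lemma: it is \emph{false} that two critical networks with the same response matrix are isomorphic as networks. The $Y$-$\Delta$ move itself is the counterexample: a star on three boundary vertices and a triangle on the same three boundary vertices are both critical (their medial graphs are lensless), have the same connection sets, and with matched conductances have the same response matrix, yet they are not isomorphic graphs. The correct statement, which is what \cite{french} and \cite{curtis} actually provide, is that two critical graphs with the same connection sets $\pi$ are related by a sequence of $Y$-$\Delta$ transformations (this is the content behind the paper's own use of \cite[Th\'{e}or\`{e}me 2]{french} and \cite[Theorem 1]{curtis} in the proof of Theorem \ref{equivalentconnections}); since $Y$-$\Delta$ moves act bijectively on conductances and preserve the response matrix, one transports $\Gamma_1'$ by such moves onto the underlying graph of $\Gamma_2'$, and only then does the recovery theorem (conductances on a fixed critical graph are determined by $M$) force the two networks to coincide. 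With ``isomorphic'' replaced by ``$Y$-$\Delta$ equivalent, then equal after applying recoverability,'' your chain $\Gamma_1\sim\Gamma_1'\sim\Gamma_2'\sim\Gamma_2$ goes through; as you write it, the final identification step does not. The other acknowledged obstacle (that an innermost lens can be emptied by motions and then removed by a series/parallel or spike/loop reduction without increasing complexity, including at the boundary) is indeed the delicate part of the reduction-to-critical step, and is exactly what \cite{french} and \cite[Lemma 6.3]{curtis} supply.
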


As in \cite{paper1}, we will often consider only the underlying circular planar graph of an electrical network. Accordingly, we use the following notion of equivalence of circular planar graphs:

\begin{defn}
Let $\Gamma_{1},\Gamma_{2}$ be circular planar graphs, each with the same number of boundary vertices. Then, $\Gamma_{1},\Gamma_{2}$ are \textbf{equivalent} (with the equivalence abusively denoted $\sim$) if there exist conductances $\gamma_{1},\gamma_{2}$ on $\Gamma_{1},\Gamma_{2}$, respectively such that $(\Gamma_{1},\gamma_{1}),(\Gamma_{2},\gamma_{2})$ are equivalent electrical networks. Alternatively, $\Gamma_{1}\sim\Gamma_{2}$ if $\Gamma_{1}$ and $\Gamma_{2}$ are related by a sequence of local equivalences.
\end{defn}

The central ingredient to this paper is the characterization of response matrices given in \cite{curtis}. In order to state this characterization, we recall the definitions of circular pairs and circular minors.

\begin{defn}\label{circpair}
Let $P=\{p_{1},p_{2},\ldots,p_{k}\}$ and $Q=\{q_{1},q_{2},\ldots,q_{k}\}$ be disjoint ordered subsets of the boundary vertices of an electrical network $(\Gamma,\gamma)$. We say that $(P;Q)$ is a \textbf{circular pair} if $p_{1},\ldots,p_{k},q_{k},\ldots,q_{1}$ are in clockwise order around the circle. We will refer to $k$ as the \textbf{size} of the circular pair.
\end{defn}

\begin{rmk}\label{flipPQ}
We will take $(P;Q)$ to be the same circular pair as $(\widetilde{Q};\widetilde{P})$, where $\widetilde{P}$ denotes the ordered set $P$ with its elements reversed. Almost all of our definitions and statements are compatible with this convention; most notably, by Theorem \ref{circularminorspositive}(a), because response matrices are positive, the circular minors $M(P;Q)$ and $M(\widetilde{Q};\widetilde{P})$ are the same. Whenever there is a question as to the effect of choosing either $(P;Q)$ or $(\widetilde{Q};\widetilde{P})$, we take extra care to point the possible ambiguity.
\end{rmk}

\begin{defn}\label{connection}
Let $(P;Q)$ and $(\Gamma,\gamma)$ be as in Definition \ref{circpair}. We say that there is a \textbf{connection} from $P$ to $Q$ in $\Gamma$ if there exists a collection of vertex-disjoint paths from $p_{i}$ to $q_{i}$ in $\Gamma$, and furthermore each path in the collection contains no boundary vertices other than its endpoints. We denote the set of circular pairs $(P;Q)$ for which $P$ is connected to $Q$ by $\pi(\Gamma)$.
\end{defn}

\begin{defn}
Let $(P;Q)$ and $(\Gamma,\gamma)$ be as in Definition \ref{circpair}, and let $M$ be the response matrix. We define the \textbf{circular minor} associated to $(P;Q)$ to be the determinant of the $k\times k$ matrix $M(P;Q)$ with $M(P;Q)_{i,j}=M_{p_{i},q_{j}}$.
\end{defn}

As in \cite{paper1}, we often refer to submatrices and their determinants both as minors, interchangeably.

We now have the language needed to state the characterization of response matrices:

\begin{theorem}[{\cite[Theorem 2.2.6]{curtis}}]\label{circularminorspositive}
Let $M$ be an $n\times n$ matrix. Then:
\begin{enumerate}
\item[(a)] $M$ is the response matrix for an electrical network $(\Gamma,\gamma)$ if and only if $M$ is symmetric with row and column sums equal zero, and each of the circular minors $M(P;Q)$ is non-negative. 
\item[(b)] If $M$ is the response matrix for an electrical network $(\Gamma,\gamma)$, the positive circular minors $M(P;Q)$ are exactly those for which there is a connection from $P$ to $Q$.
\end{enumerate}
\end{theorem}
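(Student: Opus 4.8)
\medskip
\noindent\emph{Proof strategy.}\quad The plan is to realize the response matrix as a Schur complement of the weighted Kirchhoff matrix. Write $L$ for the Laplacian of $(\Gamma,\gamma)$ and split its rows and columns into the boundary block $\partial$ and the interior block $I$; then the standard boundary value problem gives
\[ M = L_{\partial\partial} - L_{\partial I}\, L_{II}^{-1}\, L_{I\partial}, \]
with $L_{II}$ invertible because it is a grounded Laplacian of a connected weighted graph, hence positive definite (one may assume $\Gamma$ connected). The ``soft'' assertions of (a) follow at once: $L$ is symmetric and $L\mathbf{1}=0$, and both properties descend to the Schur complement, so $M$ is symmetric with row and column sums zero.

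For the positivity statements I would establish a matrix-tree expansion of the circular minors. Combining the generalized Schur/Jacobi identity $\det M(P;Q) = \det L(P\cup I;\,Q\cup I)/\det L_{II}$ with the all-minors matrix-tree theorem applied to $L$, one writes $\det M(P;Q)$ as a signed weighted sum over ``groves'': spanning forests of $\Gamma$ in which, for each $i$, the vertices $p_i$ and $q_i$ lie in a common tree, and each of the remaining trees contains exactly one of the unused boundary vertices, the weight of a grove being the product of its conductances. The essential point --- where the circular embedding enters --- is that every such configuration carries the \emph{same} sign: since $p_1,\dots,p_k,q_k,\dots,q_1$ occur in clockwise order on the circle, the matching realized by vertex-disjoint trees joining $P$ to $Q$ must be $p_i\leftrightarrow q_i$ (any other matching would force two trees to cross), so all the Lindstr\"{o}m--Gessel--Viennot signs agree. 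Hence $\det M(P;Q)\ge 0$, with strict inequality exactly when at least one grove exists. Extracting from a grove the $P$-to-$Q$ paths inside its transport trees shows that a grove exists if and only if there is a connection from $P$ to $Q$ in the sense of Definition \ref{connection}; this proves part (b) and the non-negativity in (a). The vanishing in the no-connection case also admits a direct proof: by the planar Menger theorem the absence of a connection produces a separating arc through fewer than $k$ vertices, forcing a linear dependence among the rows of $M(P;Q)$.

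The harder direction is the converse in (a): given a symmetric $M$ with vanishing row sums and all circular minors non-negative, build a network with response matrix $M$. I would argue by induction on $n$ through a ``peeling'' procedure modeled on the boundary-edge/boundary-spike recovery of \cite{curtis}: from the entries of $M$ and a well-chosen positive circular minor involving two consecutive boundary vertices, solve for the conductance of a boundary edge (or the data of a boundary spike, i.e.\ a $Y$-configuration at a boundary vertex), invert the corresponding local equivalence to obtain a matrix $M'$ of order $n-1$, verify that $M'$ is again symmetric with vanishing row sums and non-negative circular minors, apply the inductive hypothesis, and reattach; the cases $n\le 2$ are immediate. As a fallback I would instead run a dimension count: conductances on a fixed well-connected graph with $\binom{n}{2}$ edges should map diffeomorphically onto the top stratum where all maximal circular minors are positive; one then identifies the closure of this image with the full semialgebraic set cut out by the inequalities in (a), obtaining the lower strata by degenerating conductances to $0$ or $\infty$ and invoking Theorem \ref{localeqenough}.

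The main obstacle is precisely this realizability step. The bookkeeping of which local move to invert is delicate, and --- crucially --- checking that the peeled-off conductance is positive and that $M'$ retains \emph{all} of the circular-minor inequalities (not merely those visibly involved in the peeling) is exactly where the full strength of the hypothesis is used. The sign-coherence in the matrix-tree expansion is the other point that needs care, but it is a clean consequence of the clockwise ordering built into Definition \ref{circpair}.
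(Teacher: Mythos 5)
First, a point of context: the paper does not prove this theorem at all --- it is quoted from \cite{curtis} (their Theorem 4 / Theorem 2.2.6), so there is no in-paper argument to compare yours against; I am judging your sketch on its own merits. Your treatment of the forward implication of (a) and of part (b) is essentially the standard argument and is sound in outline: the Schur complement $M = L_{\partial\partial} - L_{\partial I}L_{II}^{-1}L_{I\partial}$ gives symmetry and zero row sums, the generalized Jacobi identity reduces $\det M(P;Q)$ to $\det L(P\cup I;Q\cup I)/\det L_{II}$, and the all-minors matrix-tree expansion plus planarity gives sign coherence. One detail you should make explicit: in that expansion the trees pair elements of $Q\cup U$ with elements of $P\cup U$ ($U$ the unused boundary vertices), so a priori the induced bijections are not matchings of $P$ with $Q$; but since each unused vertex lies in both index sets, its tree can contain no other boundary vertex, so the pairing restricts to a bijection $Q\to P$, and the circular order $p_1,\dots,p_k,q_k,\dots,q_1$ then forces the noncrossing pairing to be $p_i\leftrightarrow q_i$. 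With that observation your sign-coherence claim is correct, and extracting the paths gives the connection criterion of Definition \ref{connection}, exactly as in \cite{curtis} and Kenyon--Wilson.

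The genuine gap is the converse of (a), which you acknowledge as ``the main obstacle'' but do not actually prove: you name two strategies without executing either, and each conceals the real content of the theorem. The peeling plan cannot be ``induction on $n$'' as stated, because deleting a boundary edge or contracting a boundary spike does not reduce the number of boundary vertices; the actual induction (as in \cite{curtis}) runs on a complexity measure of a critical graph realizing the pattern of strictly positive minors, and before any peeling can begin one must extract, from the inequalities alone, the existence of a removable boundary edge or spike --- this is precisely the kind of combinatorial statement that Lemma \ref{graphBEPBSP} and the hard direction of Theorem \ref{circposthm} in this paper are about, and it does not come for free. Likewise, the verification that the reduced matrix $M'$ again satisfies \emph{all} circular-minor inequalities, which you flag as ``delicate,'' is the crux rather than a routine check. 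The fallback via a dimension count assumes exactly what must be proved: that the closure of the image of the top cell is the full semialgebraic set cut out by the inequalities and that every boundary point is attained by an honest degenerate network. Degenerating conductances to $0$ or $\infty$ requires a genuine compactness/limit argument (this is the content of the de Verdi\`ere--Gitler--Vertigan/CIM analysis), and Theorem \ref{localeqenough} is about equivalence moves between networks, not about limits of response matrices, so invoking it cannot close that step. As written, then, the realizability direction is a plan rather than a proof.
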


Let us also mention include two more tools which we will need in what follows.

\begin{theorem}\label{equivalentconnections}
The circular planar graphs $G_1$ and $G_2$ are equivalent if and only if $\pi(G_{1})=\pi(G_{2})$.
\end{theorem}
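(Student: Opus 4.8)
The plan is to prove the two directions of the biconditional separately, using Theorem \ref{localeqenough} (local equivalences generate $\sim$) for the forward direction and a more delicate argument for the converse. For the forward direction, suppose $G_1 \sim G_2$. By Theorem \ref{localeqenough}, it suffices to check that $\pi(\Gamma)$ is unchanged under each of the local equivalences: self-loop and spike removal, series and parallel edge replacement, and $Y$-$\Delta$ transformations. Each of these is a purely local modification, so for a fixed circular pair $(P;Q)$ one argues that a system of vertex-disjoint paths realizing the connection can be rerouted through the modified piece of the graph and vice versa. The only subtle cases are the $Y$-$\Delta$ move (where a path passing through the center vertex of the $Y$ must be matched with one of the three edges of the $\Delta$, and one must check no two of the disjoint paths want to use the same $\Delta$-edge) and spike/boundary-spike removal (where one must be careful about the boundary-vertex condition in Definition \ref{connection}). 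This is the kind of finite case check that is routine but needs care.

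For the converse, suppose $\pi(G_1) = \pi(G_2)$; we must show $G_1 \sim G_2$. The natural route is to go through response matrices and invoke Theorem \ref{circularminorspositive}. Pick any conductances $\gamma_1$ on $G_1$ and let $M_1$ be the resulting response matrix; I want to produce conductances $\gamma_2$ on $G_2$ with the same response matrix. The idea is that the set $\pi(G)$ determines the ``combinatorial type'' of the network, and within a fixed combinatorial type the response matrix can be prescribed: more precisely, one should show that the map from conductances on $G$ to response matrices has image exactly the set of matrices $M$ that are symmetric, have row sums zero, have $M(P;Q) \ge 0$ for all circular pairs, and have $M(P;Q) > 0$ precisely for $(P;Q) \in \pi(G)$ (the containment ``$\subseteq$'' being Theorem \ref{circularminorspositive}(b), and ``$\supseteq$'' requiring an argument that any such matrix is achievable on $G$). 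Given that, since $\pi(G_1) = \pi(G_2)$, the matrix $M_1$ lies in the image for $G_2$ as well, yielding $\gamma_2$ with response matrix $M_1$, hence $G_1 \sim G_2$.

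Alternatively, and perhaps more cleanly, one can reduce to the well-connected/critical case: by the theory in \cite{curtis} and \cite{french}, every circular planar graph $G$ is equivalent to a ``critical'' graph, and critical graphs are determined up to the moves by their connection sets $\pi(G)$ (this is essentially the content of the medial-graph classification). So the strategy would be: reduce $G_1$ and $G_2$ to critical graphs $G_1', G_2'$ via local equivalences; note $\pi(G_1') = \pi(G_1) = \pi(G_2) = \pi(G_2')$ using the forward direction already proved; and then cite or reprove that two critical graphs with the same connection set are related by moves (in fact isotopic, once one removes redundancy). Either way, the main obstacle is the surjectivity/realizability input — showing that $\pi(G)$ alone pins down the graph up to $\sim$, equivalently that no two ``genuinely different'' equivalence classes of graphs share a connection set. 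I expect to lean on the structure theory of critical graphs and medial graphs from \cite{curtis} for this, so the real work is assembling that machinery rather than a self-contained new argument; the forward direction, by contrast, is an elementary but slightly tedious verification over the list of local moves.
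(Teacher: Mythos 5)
Your proposal is correct, and your ``alternative'' route for the converse is exactly the paper's proof: the paper checks that local equivalences preserve $\pi$ for the forward direction, then reduces $G_1,G_2$ to critical representatives $G_1',G_2'$ (citing \cite[Th\'{e}or\`{e}me 2]{french}) and invokes the fact that critical graphs with the same connection set are equivalent (\cite[Theorem 1]{curtis}). Your first sketch for the converse, via realizability of any response matrix with positivity pattern $\pi(G)$ by conductances on $G$, would also work, but the surjectivity input it needs is essentially the same structure theory from \cite{curtis} (the parametrization of response matrices by conductances on a critical graph), so it buys nothing over the shorter critical-graph reduction you correctly identify as cleaner.
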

\begin{proof}
It is easily checked that local equivalences preserve $\pi$, so if $G_{1}\sim G_{2}$, then $\pi(G_{1})=\pi(G_{2})$. Conversely, if $\pi(G_{1})=\pi(G_{2})$, there exist critical (see \cite[\S 3]{paper1} graphs $G'_{1},G'_{2}$ with $G_{1}\sim G'_{1}$ and $G_{2}\sim G'_{2}$, by  \cite[Th\'{e}or\`{e}me 2]{french}. Then, $G'_{1}\sim G'_{2}$, so the claim follows from \cite[Theorem 1]{curtis}.
\end{proof}

\begin{defn}
Let $G$ be a circular planar graph, and let $e$ be an edge with endpoints $v,w$. The \textbf{deletion} of $e$ from $G$ is exactly as named; the edge $e$ is removed while leaving the rest of the vertices and edges of $G$ unchanged. If $v,w$ are not both boundary vertex of $G$, we may also perform a \textbf{contraction} of $e$, which identifies all points of $e$. If exactly one of $v,w$ is a boundary vertex, then the image of $e$ under the contraction is a boundary vertex. Note that edges connecting two boundary vertices cannot be contracted to either endpoint.
\end{defn}

\numberwithin{equation}{subsection}

\section{Electrical Positroids}\label{circpos}

By Theorem \ref{circularminorspositive}, $n\times n$ response matrices are characterized in the following way: a square matrix $M$ is the response matrix for an electrical network $(\Gamma,\gamma)$ if and only if $M$ is symmetric, its row and column sums are zero, and its circular minors $M(P;Q)$ are non-negative. Furthermore, $M(P;Q)$ is positive if and only if there is a connection from $P$ to $Q$ in $\Gamma$. The sets $S$ of circular pairs for which there exists a response matrix $M$ with $M(P;Q)$ is positive if and only if $(P;Q)\in S$, then, are thus our next objects of study.

The case of the totally nonnegative Grassmannian was studied in \cite{sasha}: for $k\times n$ (with $k<n$) matrices with non-negative maximal minors, the possible sets of positive maximal minors are called \textit{positroids}, and are a special class of matroids. Our objects will be called \textit{electrical positroids}, which we first construct axiomatically, then prove are exactly those sets $S$ of positive circular minors in response matrices.

\subsection{Grassmann-Pl\"{u}cker Relations and Electrical Positroid Axioms}

Here, we present the axioms for electrical positroids, which arise naturally from the Grassmann-Pl\"{u}cker Relations.

\begin{defn}
Let $M$ be a fixed matrix, whose rows and columns are indexed by some sets $I,J$. We write $\Delta^{i_{1}i_{2}\cdots i_{m},j_{1}j_{2}\cdots j_{n}}$ for the determinant of the matrix $M'$ formed by deleting the rows corresponding to $i_{1},i_{2},\ldots,i_{m}\in I$ and $j_{1},j_{2},\ldots,j_{n}\in J$, provided $M'$ is square.
\end{defn}

While the meaning $\Delta^{i_{1}i_{2}\cdots i_{m},j_{1}j_{2}\cdots j_{n}}$ depends on the underlying sets $I,J$, these sets will always be implicit.

\begin{proposition}\label{gp}
We have the following two \textbf{Grassmann-Pl\"{u}cker} relations.
\begin{enumerate}
\item[(a)] Let $M$ be an $n\times n$ matrix, with $a,b$ elements of its row set and $c,d$ elements of its column set. Furthermore, suppose that the row $a$ appears above  row $b$ and column $c$ appears to the left of column $d$. Then,
\begin{align}\label{P1} 
\Delta^{a,c}\Delta^{b,d}=\Delta^{a,d}\Delta^{b,c}+\Delta^{ab,cd}\Delta^{\emptyset,\emptyset}.
\end{align}
\item[(b)] Let $M$ be an $(n+1)\times n$ matrix, with $a,b,c$ elements of its row set (appearing in this order, from top to bottom), and let $d$ an element of its column set. Then,
\begin{align}\label{P2}
\Delta^{b,\emptyset}\Delta^{ac,d}=\Delta^{a,\emptyset}\Delta^{bc,d}+\Delta^{c,\emptyset}\Delta^{ab,d}.
\end{align}
\end{enumerate}
\end{proposition}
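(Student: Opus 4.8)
The plan is to recognize \eqref{P1} as the classical Desnanot--Jacobi (Dodgson condensation) identity and to deduce \eqref{P2} from it by a one-column bordering trick. Both identities are, conceptually, instances of the three-term Grassmann--Pl\"{u}cker relation $p_{Tij}p_{Tkl}-p_{Tik}p_{Tjl}+p_{Til}p_{Tjk}=0$ among maximal minors: \eqref{P1} is this relation for the $n\times 2n$ matrix $[\,I_n\mid M\,]$, whose maximal minors (via cofactor expansion along the identity columns) are precisely the minors of $M$ of every size, up to sign; and \eqref{P2} is the relation for the $n\times(n+2)$ matrix $[\,M^{T}\mid e_d\,]$. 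But the bordering argument below reaches \eqref{P2} with less machinery.

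For (a): both sides of \eqref{P1} are polynomials in the entries of $M$, so it suffices to treat the case $\det M\neq 0$ and conclude by density. For invertible $M$, apply Jacobi's theorem relating the minors of $\mathrm{adj}(M)$ to the complementary minors of $M$: the $2\times2$ submatrix of $\mathrm{adj}(M)$ on rows $\{c,d\}$ and columns $\{a,b\}$ has entries the cofactors $(-1)^{a+c}\Delta^{a,c}$, $(-1)^{b+c}\Delta^{b,c}$, $(-1)^{a+d}\Delta^{a,d}$, $(-1)^{b+d}\Delta^{b,d}$, so its determinant is $(-1)^{a+b+c+d}(\Delta^{a,c}\Delta^{b,d}-\Delta^{a,d}\Delta^{b,c})$, while Jacobi's identity equates it with $\det M$ times $(-1)^{a+b+c+d}\Delta^{ab,cd}$; the sign factors cancel and \eqref{P1} drops out. (Alternatively one cites Desnanot--Jacobi directly, or extracts it from the Pl\"{u}cker relation for $[\,I_n\mid M\,]$.)

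For (b): form the $(n+1)\times(n+1)$ matrix $\widehat M=[\,M\mid e_b\,]$ obtained by adjoining as a new rightmost column the standard basis vector supported in row $b$; call this column $e$, and write $\widehat\Delta$ for minors of $\widehat M$. Apply \eqref{P1} to $\widehat M$ with rows $a$ (above) and $c$ (below) and columns $d$ (left) and $e$ (right): $\widehat\Delta^{a,d}\widehat\Delta^{c,e}=\widehat\Delta^{a,e}\widehat\Delta^{c,d}+\widehat\Delta^{ac,de}\widehat\Delta^{\emptyset,\emptyset}$. Now $\widehat\Delta^{c,e}=\Delta^{c,\emptyset}$ and $\widehat\Delta^{a,e}=\Delta^{a,\emptyset}$ (the column $e$ has been deleted), $\widehat\Delta^{ac,de}=\Delta^{ac,d}$, and each of $\widehat\Delta^{\emptyset,\emptyset}$, $\widehat\Delta^{a,d}$, $\widehat\Delta^{c,d}$ still contains a copy of the column $e=e_b$ (deleting row $a$ or $c$ does not touch the nonzero entry, which sits in row $b$), so a single cofactor expansion along that column gives $\widehat\Delta^{\emptyset,\emptyset}=\pm\Delta^{b,\emptyset}$, $\widehat\Delta^{a,d}=\pm\Delta^{ab,d}$, $\widehat\Delta^{c,d}=\pm\Delta^{bc,d}$. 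Substituting and rearranging yields \eqref{P2}.

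The only genuine work is pinning down signs: the parity in Jacobi's identity for (a), and the three $\pm$'s in the cofactor expansions for (b), which a priori depend on the positions of $a,b,c,d$. I would settle these by verifying the identity once in a normalized instance --- say $n=2$ with $a,b,c,d$ in their natural order, where \eqref{P1} and \eqref{P2} are one-line hand computations --- and then reducing the general case to it, using that \eqref{P1} and \eqref{P2} are each invariant, up to a single overall sign that cancels between the two sides, under independently permuting the rows and the columns of $M$; or one simply tracks the relevant parities such as $(-1)^{(\text{position of }b)+n}$ through each expansion. Everything else is formal.
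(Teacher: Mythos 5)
Your proposal is correct, and it is worth noting that the paper itself gives no proof of Proposition \ref{gp}: the two relations are stated as classical facts (Desnanot--Jacobi condensation and a three-term Pl\"{u}cker relation), so your argument is a self-contained substitute rather than a variant of an argument in the text. Part (a) via Jacobi's theorem on minors of the adjugate, plus a density/polynomial-identity argument to dispense with invertibility, is sound, and the parity $(-1)^{a+b+c+d}$ does appear identically on both sides and cancels, as you claim; the hypothesis that $a$ lies above $b$ and $c$ left of $d$ is exactly what lets you read the index pairs in increasing order so that no extra sign enters. For part (b), the deferred signs do work out, and one can settle them directly rather than by your normalization fallback: if $\beta$ denotes the position of row $b$ in the $(n+1)\times n$ matrix $M$, cofactor expansion along the appended column $e_b$ gives $\widehat\Delta^{\emptyset,\emptyset}=(-1)^{\beta+n+1}\Delta^{b,\emptyset}$ and $\widehat\Delta^{a,d}=(-1)^{\beta+n+1}\Delta^{ab,d}$ (deleting row $a$, which lies above $b$, shifts the nonzero entry up by one), while $\widehat\Delta^{c,d}=(-1)^{\beta+n}\Delta^{bc,d}$ (row $c$ lies below $b$, so no shift); substituting into \eqref{P1} for $\widehat M$ and multiplying through by $(-1)^{\beta+n+1}$ yields $\Delta^{c,\emptyset}\Delta^{ab,d}=-\Delta^{a,\emptyset}\Delta^{bc,d}+\Delta^{b,\emptyset}\Delta^{ac,d}$, which is exactly \eqref{P2} after rearranging --- the relative sign between the two expansions is precisely what produces the plus sign. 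The one place to be careful is your suggested ``normalize and permute'' shortcut: under a permutation moving $a,b,c,d$ into standard positions, the six minors in \eqref{P1} do not all change by a single common sign, so one would still have to check that the three \emph{products} acquire equal signs; since your direct parity-tracking alternative (carried out above) is complete, this caveat is inessential.
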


While the Grasmann-Pl\"{u}cker relations are purely algebraic in formulation, they encode combinatorial information concerning the connections of circular pairs in a circular planar graph $\Gamma$. As a simple example, consider four boundary vertices $a,b,d,c$ in clockwise order of an electrical network $(\Gamma,\gamma)$, and let $\pi=\pi(\Gamma)$. If $M$ is the response matrix of $(\Gamma,\gamma)$, then $M'=M(\{a,b\},\{c,d\})$ is the circular minor associated to the circular pair $(a,b;c,d)$; thus, $M'$ has non-negative determinant. Furthermore, the entries of $M'$ are $1\times1$ circular minors of $M$, so they, too, must be non-negative.

Now, suppose that the left hand side of (\ref{P1}) is positive, that is, $\Delta^{a,c}\Delta^{b,d}>0$. Equivalently, there are connections between $b$ and $d$ and between $a$ and $c$ in $\Gamma$. Then, at least one of the two terms on the right hand side must be strictly positive; combinatorially, this means that either there are connections between $b$ and $c$ and between $a$ and $d$, or there is a connection between $\{a,b\}$ and $\{c,d\}$. One can derive similar combinatorial rules by assuming one of the terms on the right hand side is positive, and deducing that the left hand side must be positive as well.

The first six of the electrical positroid axioms given in Definition \ref{posaxioms} summarize all of the information that can be extracted in this way from the Grassmann-Pl\"{u}cker relations.

\begin{defn}
If $a\in P$, write $P-a$ for the ordered set formed by removing $a$ from $P$.
\end{defn}

\begin{defn}\label{posaxioms}
A set $S$ of circular pairs is an \textbf{electrical positroid} if it satisfies the following eight axioms:
\begin{enumerate}
\item For ordered sets $P=\{a_1,a_2,\ldots,a_N\}$ and $Q=\{b_1,b_2,\ldots,b_N\}$, with $a_{1},\ldots,a_{N},b_{N},\ldots,b_{1}$ in clockwise order (that is, $(P;Q)$ is a circular pair), consider any $a=a_i$, $b=a_j$, $c=b_k$, $d=b_\ell$ with $i<j$ and $k<\ell$. Then:
\begin{enumerate}
\item \label{axside} If $(P-a; Q-c),(P-b; Q-d)\in S$, then either $(P-a;Q-d),(P-b;Q-c)\in S$ or $(P-a-b;Q-c-d),(P;Q)\in S$.
\item \label{axcross} If $(P-a;Q-d),(P-b;Q-c)\in S$, then $(P-a; Q-c),(P-b; Q-d)\in S$.
\item \label{axsep} If $(P-a-b;Q-c-d),(P;Q)\in S$, then $(P-a; Q-c),(P-b; Q-d)\in S$.
\end{enumerate}
\end{enumerate}
\begin{enumerate}
\setcounter{enumi}{1}
\item For $P=\{a_1,a_2,\ldots,a_{N+1}\}$ and $Q=\{b_1,b_2,\ldots,b_N\}$, with $a_{1},a_{2},\ldots,a_{N+1},b_{N},\ldots,b_{1}$ in clockwise order, consider any $a=a_i$, $b=a_j$, $c=a_k$, $d=b_\ell$ with $i<j<k$. Then:
\begin{enumerate}
\item \label{axmid} If $(P-b;Q),(P-a-c;Q-d)\in S$, then either $(P-a;Q),(P-b-c;Q-d)\in S$ or $(P-c;Q),(P-a-b;Q-d)\in S$.
\item \label{axleft} If $(P-a;Q),(P-b-c;Q-d)\in S$, then $(P-b;Q),(P-a-c;Q-d)\in S$.
\item \label{axright} If $(P-c;Q),(P-a-c;Q-d)\in S$, then $(P-b;Q),(P-a-c;Q-d)\in S$.
\end{enumerate}
\end{enumerate}
Finally:
\begin{enumerate}
\setcounter{enumi}{2}
\item \label{axsub} (\textbf{Subset axiom}) For $P=\{a_1,a_2,\ldots,a_n\}$ and $Q=\{b_1,b_2,\ldots,b_n\}$ with $(P;Q)$ a circular pair, if $(P;Q)\in S$, then $(P-a_i;Q-b_i)\in S$.
\item \label{axempty} $\epair\in S$.
\end{enumerate}
\end{defn}

\begin{theorem}\label{circposthm}
A set $S$ of circular pairs is an electrical positroid if and only if there exists a response matrix whose positive circular minors are exactly those corresponding to $S$.
\end{theorem}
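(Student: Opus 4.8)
The theorem is an equivalence whose two directions are quite unequal in difficulty: the forward direction translates the Grassmann--Pl\"ucker relations of Proposition~\ref{gp} directly into statements about connections, while the reverse direction requires constructing a circular planar graph out of the axioms, and is where essentially all of the work lies. For the forward direction, suppose $M$ is a response matrix and let $S$ be the set of circular pairs $(P;Q)$ with $M(P;Q)>0$; by Theorem~\ref{circularminorspositive}(a) every circular minor of $M$ is non-negative. To verify Axioms~\ref{axside}, \ref{axcross}, \ref{axsep}, fix a circular pair $(P;Q)=(a_1\cdots a_N;\,b_1\cdots b_N)$ together with indices $i<j$ and $k<\ell$, set $a=a_i$, $b=a_j$, $c=b_k$, $d=b_\ell$, and apply relation \eqref{P1} to the $N\times N$ matrix $M(P;Q)$ with $a,b$ as its two rows and $c,d$ as its two columns. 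Each of the six determinants appearing in \eqref{P1} equals one of the circular minors $M(P-a;Q-c)$, $M(P-b;Q-d)$, $M(P-a;Q-d)$, $M(P-b;Q-c)$, $M(P-a-b;Q-c-d)$, $M(P;Q)$ — deleting entries from an ordered circular pair again yields an ordered circular pair, so all of these are genuine circular minors, hence non-negative. Then \eqref{P1} gives all three axioms at once: positivity of its left-hand side forces one of the two products on the right, and hence both that product's factors, to be positive (Axiom~\ref{axside}); positivity of either product on the right forces the left-hand side, and hence both its factors, to be positive (Axioms~\ref{axcross} and~\ref{axsep}). Axioms~\ref{axmid}, \ref{axleft}, \ref{axright} follow in exactly the same way from relation \eqref{P2} applied to the $(N+1)\times N$ matrix $M(P;Q)$ with rows indexed by $P$ and columns by $Q$. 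For the subset Axiom~\ref{axsub} one instead uses Theorem~\ref{circularminorspositive}(b): a connection realizing $(P;Q)$ is a family of vertex-disjoint paths from $p_m$ to $q_m$ meeting the boundary only at their endpoints, and deleting the $i$-th of these exhibits a connection realizing $(P-a_i;Q-b_i)$. Finally Axiom~\ref{axempty} holds since $M\epair=1>0$. Thus $S$ is an electrical positroid.

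Conversely, let $S$ be an electrical positroid on boundary vertices $1,\dots,n$. By Theorem~\ref{circularminorspositive}(b), for \emph{any} circular planar graph $\Gamma$ and any conductance on it the positive circular minors of the response matrix are exactly those of $\pi(\Gamma)$; it therefore suffices to produce a circular planar graph $\Gamma$ with $\pi(\Gamma)=S$. The plan is induction on $n$, with $n\le 2$ immediate. Set $S'=\{(P;Q)\in S:\ n\notin P\cup Q\}$; restricting each of the eight axioms to circular pairs that avoid $n$ shows that $S'$ is an electrical positroid on $1,\dots,n-1$, so by the inductive hypothesis there is a graph $\Gamma'$ with $\pi(\Gamma')=S'$. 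It then remains to reinsert the boundary vertex $n$ on the arc of the disk from which it was deleted and to attach it to $\Gamma'$ — as an isolated vertex, a boundary spike, or a boundary edge to one of its neighbours $n-1$ or $1$, possibly after first replacing $\Gamma'$ with an equivalent graph — so that the resulting graph $\Gamma$ satisfies $\pi(\Gamma)=S$. Which attachment is correct is dictated by the set $\{(P;Q)\in S:\ n\in P\cup Q\}$, whose possible shapes are tightly constrained by the subset Axiom~\ref{axsub} and by Axioms~\ref{axleft} and~\ref{axright}. An alternative, paralleling Postnikov's passage \cite{sasha} from a positroid to a plabic graph, is to read off from $S$ a medial graph in the disk and take $\Gamma$ to be the associated circular planar graph, with Theorem~\ref{equivalentconnections} ensuring the output is well defined up to equivalence.

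\textbf{The main obstacle} is entirely on the sufficiency side. One must show that the part of $S$ mentioning the new vertex is \emph{consistent}, i.e.\ compatible with exactly one way of attaching $n$ up to equivalence, and then verify that the attached graph has connection set \emph{equal} to $S$, not merely comparable with it. The two halves of this verification pull against each other and use the axioms differently: the ``closure'' Axioms~\ref{axcross}, \ref{axsep}, \ref{axleft}, \ref{axright} are what force the connections demanded by $S$ to be present in $\Gamma$, whereas the ``exchange'' Axioms~\ref{axside}, \ref{axmid}, together with the subset Axiom~\ref{axsub}, are what rule out connections not demanded by $S$. A subsidiary point to watch is that the inductive step remains valid after $\Gamma'$ is replaced by an equivalent graph before $n$ is attached; this is legitimate by Theorem~\ref{equivalentconnections}, but should be spelled out.
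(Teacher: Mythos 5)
Your forward direction is fine and is essentially the paper's argument (Grassmann--Pl\"ucker relations applied to submatrices, Theorem~\ref{circularminorspositive} for the Subset Axiom, empty determinant equal to $1$). The problem is the converse, which is where the entire content of the theorem lies, and there your proposal has a genuine gap: the inductive step is only announced, not proved, and the attachment scheme you propose is provably too weak. You claim that vertex $n$ can always be reattached to (some graph equivalent to) $\Gamma'$ as an isolated vertex, a boundary spike, or a boundary edge to $n-1$ or $1$. By Lemma~\ref{graphBEPBSP}, a realization of $S$ is equivalent to a graph with a boundary spike at $n$ only if $S$ has the $n$-BSP, and to one with boundary edge $(n-1,n)$ or $(n,1)$ only if $S$ has the corresponding BEP. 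These properties can all fail while $n$ still occurs in $S$: take $n=6$ and $S=\pi(G)$ for the graph $G$ with exactly the two edges $\{6,2\}$ and $\{6,4\}$, so $S=\{\epair,(6;2),(6;4)\}$. Here $(2;6),(6;4)\in S$ but $(2;4)\notin S$, so the $6$-BSP fails; $(5;6),(6;1)\notin S$, so both adjacent BEPs at $6$ fail; and $6$ is not isolated. Hence no graph equivalent to a realization of $S$ has $6$ isolated, carrying a spike, or joined by a boundary edge to a circular neighbour (a single edge from $6$ to any boundary vertex can be subdivided into a spike, so degree one at $6$ is ruled out altogether): every realization must give the reinserted vertex at least two incident edges, which your list of moves never produces. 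So the induction as stated cannot close, and even in the cases where a one-edge attachment exists, the crucial claims --- that $S$ restricted to pairs through $n$ determines the attachment and that the resulting graph satisfies $\pi(\Gamma)=S$ exactly --- are exactly the ``main obstacle'' you flag and defer rather than prove.

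For comparison, the paper does not induct on $n$ at all. It takes a maximal counterexample $S_0$, shows via Lemma~\ref{allBEPBSP} that $S_0$ must fail some BEP or BSP, enlarges $S_0$ to an electrical positroid $S_1$ having that property, realizes $S_1$ by a graph $G_1$ with a boundary edge or spike (Lemma~\ref{graphBEPBSP}, using maximality of $S_0$), and then deletes the edge or contracts the spike, proving through the machinery of incomplete/complete and primary circular pairs (Lemmas~\ref{crossing}--\ref{findclos} and Appendix~\ref{BSPcase}) that the resulting graph has connection set exactly $S_0$. Some argument of comparable depth --- in your setup, a case analysis of how the pairs through $n$ can look and a verified multi-edge attachment procedure --- would be needed to make your route work; as written, the proposal establishes only the easy direction.
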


Given a response matrix $M$, it is straightforward to check that the set $S$ of circular pairs corresponding to the positive circular minors of $M$ satisfies the first six axioms, by Proposition \ref{gp}. $S$ also satisfies the Subset Axiom, by Theorem \ref{circularminorspositive}. Finally, adopting the convention that the empty determinant is equal to 1, we have the last axiom. To prove Theorem \ref{circposthm}, we thus need to show that any electrical positroid $S$ may be realized as the set of positive circular minors of a response matrix, or equivalently the set of connections in a circular planar graph.

\subsection{Proof of Theorem \ref{circposthm}}\label{posproof}

We now prove Theorem \ref{circposthm}. First, recall the important convention that $(P;Q)=(\widetilde{Q};\widetilde{P})$. We leave it to the reader to check, whenever appropriate, that all of the definitions and statements we make in this section are compatible with this convention.

Fix a boundary circle with $n$ boundary vertices, which we label $1,2,\ldots,n$ in clockwise order. In this section, all labels are considered modulo $n$. We have shown, via the Grassmann-Pl\"{u}cker Relations, that the set of circular pairs corresponding to the positive circular minors of a response matrix is an electrical positroid. We now prove that, for all electrical positroids $S$, there exists a critical graph $G$ for which $\pi(G)=S$, which will establish Theorem \ref{circposthm}. The idea of the argument is as follows.

Assume, for sake of contradiction, that there exists some electrical positroid $S$ for which there does not exist such a critical graph $G$ with $\pi(G)=S$. Then, let $S_{0}$ have maximal size among all such electrical positroids. Note that $S_{0}$ does not contain all circular pairs $(P;Q)$, because otherwise $S_{0}=\pi(G_{\text{max}})$, where $G_{\text{max}}$ denotes a critical representative of the top-rank element of $EP_{n}$ (see \cite[\S 3]{paper1}).

We will then add circular pairs to $S_{0}$ according to the boundary edge and boundary spike properties (cf. \cite[\S 4]{curtis}), discussed below, to form an electrical positroid $S_{1}$. By the maximality of $S_{0}$, $S_{1}=\pi(G_{1})$ for some critical graph $G_{1}$. We will then delete a boundary edge or contract a boundary spike in $G_{1}$ to obtain a graph $G_{0}$, and show that $\pi(G_{0})=S_{0}$.

We begin by defining two properties of circular pairs, the $(i,i+1)$-boundary edge property and the $i$-boundary spike property. Let us first adopt a notational convention.

\begin{defn}
Given a circular pair $(P;Q)$, let $(P+x;Q+y)$ denote the unique circular pair (if it exists) with $P+x=P\cup\{x\}$ and $Q+y=Q\cup\{y\}$ as sets. In the ordered sets $P+x,Q+y$, $x,y$ are inserted in the appropriate positions so that $(P+x;Q+y)$ is indeed a circular pair.
\end{defn}

Given arbitrary $P,Q,x,y$, $(P+x;Q+y)$ may not be a circular pair. However, whenever we make reference to a pair of this form without commenting on its existence, we assert implicitly that it is, in fact, a circular pair.

\begin{defn}
We say that a set $S$ of circular pairs has the \textbf{$(i,i+1)$-BEP (boundary edge property)} if, for all circular pairs $(P;Q)\in S$, if $(P+i;Q+(i+1))$ is a circular pair, then $(P+i;Q+(i+1))\in S$.
\end{defn}

\begin{rmk}
According to Remark \ref{flipPQ}, if $S$ has the $(i,i+1)$-BEP, then if $(P;Q)\in S$ and $(P+(i+1);Q+i)$ is a circular pair, then $(P+(i+1);Q+i)\in S$.
\end{rmk}

\defn{We say that a set $S$ of circular pairs has the \textbf{$i$-BSP (boundary spike property)} if, for any circular pairs $(P;Q)\in S$ and $x,y$ such that $(P+x;Q+i),(P+i;Q+y)\in S$, we have $(P+x;Q+y)\in S$.}

\begin{lemma}\label{graphBEPBSP}
Recall the definitions of boundary edges and boundary spikes from \cite[\S 4]{curtis}. Let $G$ be a circular planar graph, and write $S=\pi(G)$.
\begin{enumerate}
\item[(a)] There exists $H\sim G$ with a boundary edge $(i,i+1)$ if and only if $S$ has the $(i,i+1)$-BEP.
\item[(b)] There exists $H\sim G$ has a boundary spike at $i$ if and only if $S$ has the $i$-BSP.
\end{enumerate}
\end{lemma}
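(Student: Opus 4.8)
\textbf{Proof proposal for Lemma \ref{graphBEPBSP}.}

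The plan is to prove both directions of each equivalence by translating the combinatorics of boundary edges and boundary spikes into statements about connections. For the forward direction of (a), suppose $H \sim G$ has a boundary edge $e$ joining $i$ and $i+1$. Given $(P;Q) \in S = \pi(H)$ for which $(P+i; Q+(i+1))$ is a circular pair, I need to produce a connection in $H$ from $P+i$ to $Q+(i+1)$. Since $(P;Q)$ is a circular pair with $i \notin P$, $i+1 \notin Q$, and the insertion positions place $i$ at the clockwise-end of $P$ and $i+1$ at the clockwise-end of $Q$, the existing vertex-disjoint paths from $P$ to $Q$ can be augmented by the single edge $e$ from $i$ to $i+1$; because $e$ is a boundary edge, this path uses no interior vertices and in particular is disjoint from the others (one must check $e$ is not already used, which follows since its endpoints $i, i+1$ are not among $P \cup Q$). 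The only subtlety is matching up the orderings so that the new path connects the $i$-th source to the $i$-th target, which is exactly why $i$ and $i+1$ are inserted at corresponding positions; here Remark \ref{flipPQ} and the reversal convention $(P;Q) = (\widetilde Q; \widetilde P)$ handle the case where $i+1$ would naturally precede $i$.

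For the converse direction of (a), assume $S = \pi(G)$ has the $(i,i+1)$-BEP. The natural strategy is to build the set $S' = S \cup \{(P+i; Q+(i+1)) : (P;Q) \in S\}$ and argue that $S'$ is realized as $\pi(H)$ for some $H$ obtained from a critical representative of $G$ by adjoining a boundary edge between $i$ and $i+1$; then $H \sim G$ is false in general, so instead I should argue directly. I would take a critical graph $G' \sim G$ (which exists by \cite[Th\'eor\`eme 2]{french}), form $H$ by adding a boundary edge between $i$ and $i+1$, and show $\pi(H) = S$ using Theorem \ref{equivalentconnections}: a connection in $H$ either avoids the new edge (so it exists in $G'$, hence lies in $S$) or uses it, in which case removing the new edge and its two endpoints from the connection exhibits a smaller circular pair $(P;Q) \in S$ with $(P+i;Q+(i+1))$ the given pair — and the BEP says this pair is already in $S$. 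Thus $\pi(H) = S$, but $H$ need not be critical; I resolve this by noting $\pi$ is all that matters and invoking Theorem \ref{equivalentconnections} to get $H \sim G$, or by reducing $H$ to a critical graph with the same $\pi$, which will still have a boundary edge at $(i,i+1)$ after possibly performing local equivalences — and I must check that boundary-edge-ness is preserved, or rather, that it suffices to have \emph{some} $H \sim G$ with the boundary edge, which is exactly the statement.

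Part (b) runs in parallel: a boundary spike at $i$ is a pendant interior vertex $v$ attached to boundary vertex $i$ by a single edge (so contracting it identifies $v$ with $i$, or rather the spike lets $i$ "reach into the interior"). Forward: if $H \sim G$ has a boundary spike at $i$ and $(P+x;Q+i), (P+i;Q+y) \in \pi(H)$, I splice the path through the spike vertex — the connection realizing $(P+x;Q+i)$ routes from $x$ to $i$, i.e. terminates at $i$; detaching it at $i$ and rerouting through the spike to meet the path from $(P+i;Q+y)$ that emanates from $i$ yields a connection from $P+x$ to $Q+y$, giving $(P+x;Q+y) \in S$. Converse: given the $i$-BSP, adjoin a boundary spike at $i$ to a critical $G' \sim G$ and verify $\pi$ is unchanged, again splitting into connections that use the spike vertex versus those that do not; the BSP is precisely the closure condition needed to absorb the former. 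The main obstacle I anticipate is the converse directions — specifically, verifying carefully that adding the boundary edge or spike to a critical representative does not create \emph{new} connections beyond those forced by the BEP/BSP, which requires a clean case analysis on how a path in the augmented graph interacts with the added feature, together with bookkeeping to match orderings of the source and target tuples under the $(P;Q) = (\widetilde Q;\widetilde P)$ convention. A secondary technical point is ensuring the resulting graph can be taken critical (or that criticality is not needed, only the existence of \emph{some} equivalent $H$ with the feature), which I would address by appealing to \cite[Th\'eor\`eme 2]{french} and Theorem \ref{equivalentconnections} one more time.
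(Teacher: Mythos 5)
Your part (a) is essentially the paper's own argument: the forward direction by adjoining the single-edge path $i\to i+1$ (and your observation that $i$ and $i+1$ are automatically inserted at matching positions is correct), and the converse by adding the edge along the boundary, checking via the BEP that $\pi$ is unchanged, and invoking Theorem \ref{equivalentconnections}. The paper adds a second step (a medial-graph/lens argument using motions) whose purpose is to realize the boundary edge after local equivalences, e.g.\ in a critical representative -- which is what is actually exploited later when the edge is deleted; for the literal statement your shortcut ``$H=G'$ already has the boundary edge'' is defensible, but be aware you are proving slightly less than what the paper's proof provides.

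Part (b), however, has genuine gaps. First, your definition of a boundary spike is wrong: a boundary spike at $i$ is an edge from $i$ to an \emph{interior} vertex $v$ which is the \emph{only} edge incident to the boundary vertex $i$; it is not ``a pendant interior vertex attached to $i$.'' With your reading, (b) is simply false (take $i$ adjacent to $x$ and $y$ directly, plus a dead-end vertex hanging off $i$: then $(x;i),(i;y)\in\pi$ but $(x;y)\notin\pi$), and your converse construction ``adjoin a boundary spike at $i$'' does nothing -- the correct operation is the un-contraction, i.e.\ introduce a new interior vertex carrying all edges formerly at $i$ and join it to $i$ by the spike, after which the BSP is exactly what absorbs the new connections through that vertex. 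Second, even with the correct definition, the forward direction is not settled by your splice. The path of the connection for $(P+x;Q+i)$ ending at $i$ need not start at $x$, the portion borrowed from the connection for $(P+i;Q+y)$ need not be vertex-disjoint from the retained paths, and, more seriously, the index matchings of the three circular pairs need not be compatible: e.g.\ with boundary order $y,p_1,x,p_2,i,q_2,q_1$ the pair $(P+x;Q+i)$ pairs $p_2$ with $i$, the pair $(P+i;Q+y)$ pairs $p_2$ with $q_1$, while the target $(P+x;Q+y)$ requires a path from $x$ to $q_1$, which occurs in neither connection -- so no argument that keeps ``the other paths'' of one connection and splices a single path can succeed. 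A correct proof requires a genuinely global exchange/surgery on the union of the two path systems using planarity (or an appeal to the known behavior of connections under contracting a spike), and this is the step your proposal is missing.
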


\begin{proof}
We prove (a); the proof of (b) is similar. Without loss of generality, we may assume that $i=1$. It easy to check that if $G$ has a boundary edge, then $S$ must have the corresponding BEP. Conversely, suppose that $S$ has the $(1,2)$-BEP. Then, let $G'$ be the graph obtained by adding an edge $(1,2)$ in $G$ such that the added edge does not cut through any faces of $G$. Clearly, $\pi(G')-S$ consists only of circular pairs $(P;Q)$ such that $(P+1;Q+2)\in S$ or $(P+2;Q+1)\in S$. However, $S$ contains all such circular pairs, so in fact $\pi(G')=\pi(G)$. Then, by Theorem \ref{equivalentconnections}, $G'\sim G$.

It is left to check that $G\sim H$, for some circular planar graph $H$ with the boundary edge $(1,2)$. Let $a,b$ be the two medial boundary vertices between $1$ and $2$ in $\mc{M}(G)$. Note that adding the edge $(1,2)$ to $G$ corresponds to introducing an additional crossing in $\mc{M}(G)$ between the (distinct) wires with endpoints $a$ and $b$. Introducing this new crossing yields an equivalent medial graph, so it must have created it a lens. From here, it is easily seen, after applying \cite[Lemma 6.3]{curtis}, that motions may be applied in $\mc{M}(G')$ so that this lens corresponds to parallel edges between the boundary vertices $1$ and $2$ in some $H\sim G'\sim G$. The desired conclusion follows.
\end{proof}

\begin{lemma}\label{allBEPBSP}
If $S$ has all $n$ BEPs and all $n$ BSPs, then $S$ contains all circular pairs.
\end{lemma}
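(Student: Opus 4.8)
The plan is to argue by induction on the number of boundary vertices $n$, showing that a set $S$ of circular pairs with all $n$ BEPs and all $n$ BSPs must contain every circular pair. The base cases $n=1,2$ are immediate: there are very few circular pairs to check, and Axiom \ref{axempty} together with a single BEP or BSP forces all of them. For the inductive step, the key observation is that the BEPs and BSPs let us ``reduce'' the problem to a disk with fewer boundary vertices. Concretely, fix a vertex, say $n$. Using the $(n-1,n)$-BEP and the $(n,1)$-BEP, one shows that membership of circular pairs involving $n$ is controlled by membership of circular pairs not involving $n$: if $(P;Q)$ is a circular pair with $n\notin P\cup Q$, then $(P;Q)\in S$ should imply $(P+n;Q+(n-1))\in S$ and $(P+1;Q+n)\in S$, and conversely, via the BSP at $n$, pairs with $n$ in both $P$ and $Q$ are generated from smaller ones. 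The idea is that $S$ restricted to circular pairs on the boundary vertices $\{1,\dots,n-1\}$ still satisfies all $n-1$ BEPs and all $n-1$ BSPs for that smaller circle (here the BEP $(i,i+1)$ for $i<n-1$ is inherited directly, and the ``wrap-around'' BEP $(n-1,1)$ of the smaller circle is obtained by composing the $(n-1,n)$- and $(n,1)$-BEPs of the larger one, using the BSP at $n$ to eliminate $n$). By induction, $S$ contains all circular pairs on $\{1,\dots,n-1\}$, and then the BEPs/BSPs at $n$ propagate this to all circular pairs involving $n$.

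In carrying this out, the first concrete step is to make precise the claim that $\epair\in S$ plus one BEP generates all ``small'' circular pairs: starting from $\epair$, repeatedly applying the $(i,i+1)$-BEP builds up pairs $(i;i+1)$, then $(i,i+2;\ldots)$, and so on, walking around the circle. The second step is the restriction-to-a-subcircle argument described above; this requires checking that the composite of two adjacent BEPs, after applying the BSP to cancel the newly introduced shared vertex $n$, genuinely yields the wrap-around BEP on the smaller circle. The third step is the propagation back up: given that all circular pairs on $\{1,\dots,n-1\}$ lie in $S$, use the two BEPs at $n$ and the BSP at $n$ to conclude that every circular pair $(P;Q)$ with $n \in P$, or $n \in Q$, or $n$ in both, also lies in $S$. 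One should organize the cases by where $n$ sits: if $n \in P$ only, peel it off by a BEP applied to a pair on the smaller circle; if $n\in Q$ only, symmetric; if $n$ appears in both $P$ and $Q$ (which can only happen using the convention $(P;Q)=(\widetilde Q;\widetilde P)$ — in fact a pair cannot literally have $n$ in both ordered sets since $P,Q$ are disjoint, so this case does not arise and only the two one-sided cases need treatment, possibly together with the BSP to handle pairs where both neighbors of a ``spike'' at $n$ are involved).

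The main obstacle I anticipate is the second step: verifying that the restriction of $S$ to a subcircle inherits \emph{all} of the boundary properties, and in particular that the wrap-around property is correctly generated rather than merely plausibly generated. The subtlety is that deleting vertex $n$ is not a purely formal operation on circular pairs — one must track how the cyclic order changes and confirm that the combinatorial identities among BEP/BSP applications line up, using the conventions of Definition \ref{circpair} and Remark \ref{flipPQ} about $(P;Q)$ versus $(\widetilde Q;\widetilde P)$. A clean alternative that sidesteps some of this bookkeeping is to avoid induction on $n$ entirely and instead argue directly: show that from $\epair$, the $n$ BEPs alone let one build up every circular pair $(P;Q)$ in which $P$ and $Q$ are each intervals (contiguous arcs) of boundary vertices, by inserting vertices one at a time along the boundary in the correct order; then use the BSPs to ``spread'' from interval pairs to arbitrary circular pairs, since any circular pair can be obtained from an interval pair by a sequence of moves that each either extend a set or insert a spike. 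Either route reduces the lemma to a finite, checkable combinatorial generation statement; the hard part is purely in organizing the induction/generation cleanly, not in any deep structural input.
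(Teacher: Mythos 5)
Your step 2 is actually fine: the restriction of $S$ to the subcircle $\{1,\ldots,n-1\}$ does inherit all of the smaller circle's BEPs and BSPs, the wrap-around BEP coming from composing the $(n-1,n)$- and $(n,1)$-BEPs through the $n$-BSP. The genuine gap is in step 3, the propagation back to pairs involving $n$. The only tools that ever \emph{create} a circular pair containing $n$ are the two BEPs at $n$ and BSPs at vertices $j\neq n$; the $n$-BSP runs the wrong way (its conclusion $(P+x;Q+y)$ never contains $n$, so it removes spikes rather than inserting them). Moreover, a BEP can attach $n$ only to one of its circle-neighbours $1$ or $n-1$, and only as the \emph{extreme} connection of the resulting pair: if $x\in P$ and $y\in Q$ are adjacent on the circle with $y$ immediately clockwise of $x$, the ordering in Definition \ref{circpair} forces $x=p_k$, $y=q_k$. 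Hence for a circular pair $(P;Q)$ with $n=p_i$ and $1<i<k$, whose connection $(n,q_i)$ has other connections nested on both of its sides, adjoining the connection $(n,1)$ or $(n,n-1)$ to $(P-n;Q-q_i)$ does not even produce a circular pair, so the connection at $n$ cannot be ``peeled off by a BEP applied to a pair on the smaller circle.'' The case you single out as delicate (both neighbours of a spike involved) is not the problematic one; interior connections at $n$ are, and your sketch gives no mechanism for them.

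Repairing step 3 forces you to build $(P;Q)$ by adding an \emph{extreme} connection last and walking its free endpoint along the empty arc with BSPs at the intermediate vertices, by induction on the size of the pair --- but that is precisely the paper's proof: the subpair $(a_2,\ldots,a_k;b_2,\ldots,b_k)$ lies in $S$ by induction on size, the BEPs supply $(b_1+1,a_2,\ldots,a_k;b_1,b_2,\ldots,b_k)$ and the adjacent-connection pairs $(b_1+j+1,a_2,\ldots;b_1+j,b_2,\ldots)$, and the BSPs at $b_1+1,b_1+2,\ldots$ walk the new endpoint from $b_1+1$ to $a_1$. Once you have that argument, the induction on $n$ is superfluous scaffolding. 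Your fallback route stumbles at its first step for the same reason: from $\epair$ the BEPs alone generate only pairs all of whose connections join circle-adjacent vertices, so already the size-one pairs $(i;j)$ with $j\neq i\pm 1$ --- let alone general interval pairs --- require the BSP walk; with that walk included you are again reproducing the paper's induction rather than giving an alternative to it.
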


\begin{proof}
We proceed by induction on the size of $(P;Q)$ that $(P;Q)\in S$ for all circular pairs $(P;Q)$. First, suppose that $|P|=1$. First, $(i;i+1)\in S$ for all $i$, because it has all BEPs and $\epair\in S$. Then, because $S$ has the $i$-BSP, and $(i-1;i),(i;i+1)\in S$ we obtain $(i-1;i+1)\in S$. Continuing in this way gives that $S$ contains all circular pairs $(P;Q)$ with $|P|=1$. 

Now, suppose that $S$ contains all circular pairs of size $k-1$. Let $(a_1,\ldots a_k;b_1,\ldots b_k)$ be a circular pair of size $k$. By assumption, $(a_2,\ldots a_k; b_2,\ldots b_k)\in S$. Because $S$ has all BEPs, $(b_1+1, a_2,\ldots a_k; b_1, b_2,\ldots b_k)\in S$ and $(b_1+2,a_{2}\ldots a_k; b_1+1,b_2,\ldots b_k)\in S$, so by the $(b_{1}+1)$-BSP, $(b_1+2,a_2,\ldots a_k; b_1, b_2,\ldots b_k)\in S$. Continuing in this way gives $(a_1,\ldots a_k; b_1,\ldots b_k)\in S$, so we have the desired claim.
\end{proof}

In particular, Lemma \ref{allBEPBSP} tells us that there exists an $i$ such that $S_{0}$, as defined in the beginning of this section, either does not have the $(i,i+1)$-BEP for some $i$, or does not have the $i$-BSP for some $i$. We first assume that $S_{0}$ does not have all BEPs; without loss of generality, suppose that $S_{0}$ does not have the $(n,1)$-BEP.

We will now add circular pairs to $S_{0}$ to obtain an electrical positroid $S_{1}$ that does have the $(n,1)$-BEP. Specifically, we add to $S_{0}$ every circular pair $(P+1;Q+n)$, where $(P;Q)\in S_{0}$ has $1<a_{1}<b_{1}<n$ (here $P=\{a_{1},\ldots,a_{k}\},Q=\{b_{1},\ldots,B_{k}\}$), to obtain $S_{1}$. According to Remark \ref{flipPQ}, this construction also puts any $(P+n;Q+1)\in S_{1}$, where $(P;Q)\in S_{0}$ and $1<b_{k}<a_{k}<n$.

\begin{lemma}
$S'$ is an electrical positroid, and has the $(n,1)$-BEP.
\end{lemma}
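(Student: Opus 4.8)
The plan is to verify the eight electrical positroid axioms for $S_1$ directly, exploiting the fact that $S_1$ is built from $S_0$ in a very controlled way: $S_1 = S_0 \cup \{(P+1;Q+n) : (P;Q)\in S_0,\ 1<a_1<b_1<n\}$ (together with the mirror copies forced by the $(P;Q)=(\widetilde Q;\widetilde P)$ convention). The key bookkeeping observation is that every new element of $S_1\setminus S_0$ contains both $1$ (in its $P$-part) and $n$ (in its $Q$-part), and deleting $1$ and $n$ respectively returns an element of $S_0$; conversely, the new elements are exactly the ``$S_0$-certifiable'' extensions. So a useful lemma to isolate first is: a circular pair $(P';Q')$ lies in $S_1$ if and only if either $(P';Q')\in S_0$, or $1\in P'$, $n\in Q'$, and $(P'-1;Q'-n)\in S_0$ with the appropriate interleaving condition $1<\min(P'-1)<\min(Q'-n)<n$. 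The $(n,1)$-BEP for $S_1$ is then essentially immediate from this characterization once one checks the edge cases where the interleaving hypothesis $1<a_1<b_1<n$ fails (namely when $1$ or $n$ already lies in $P\cup Q$, or when the smallest indices are on the ``wrong side''); in those degenerate cases one must argue using the Subset axiom and the already-present elements of $S_0$ that $(P+1;Q+n)$ is forced, or that it fails to be a circular pair and so imposes no obligation.

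For the bulk of the work—checking axioms \ref{axside}--\ref{axright}, the Subset axiom, and $\epair\in S_1$—I would split into cases according to how many of the circular pairs appearing in each axiom instance lie in $S_0$ versus $S_1\setminus S_0$. The axiom $\epair\in S_1$ is trivial since $S_0\subseteq S_1$. For each Grassmann--Pl\"ucker-type axiom, the hypothesis involves two (or more) pairs from $S_1$; if all of them are in $S_0$, the conclusion holds because $S_0$ is already an electrical positroid and the conclusion pairs, being smaller or obtained by the same index manipulations, are then either in $S_0$ directly or produced by the construction. If at least one hypothesis pair is a new pair $(P+1;Q+n)$, one ``strips off'' the distinguished vertices $1$ and $n$: the index $a=a_i$, $b=a_j$, $c=b_k$, $d=b_\ell$ playing roles in the axiom either coincide with $1$ or $n$ or they don't, and in each subcase the axiom instance descends to an axiom instance for $S_0$ among the stripped pairs, whose conclusion (valid since $S_0$ is an electrical positroid) pulls back through the construction to give the desired conclusion for $S_1$. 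The Subset axiom for a new pair $(P+1;Q+n)$ follows because deleting any matched pair $(a_i;b_i)$ other than $(1;n)$ gives $((P-a_i)+1;(Q-b_i)+n)$, which is new or in $S_0$ by the Subset axiom for $S_0$ applied to $(P;Q)$; deleting $(1;n)$ itself returns $(P;Q)\in S_0$.

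The main obstacle I anticipate is the case analysis around the interleaving/position hypothesis ``$1<a_1<b_1<n$'' in the definition of the construction, together with the $(\widetilde Q;\widetilde P)$ identification. One has to be careful that the set of new pairs is symmetric under this flip (the remark after the construction asserts this, but it must be used consistently), and that when an axiom instance mixes a new pair with an $S_0$ pair whose distinguished indices lie near $1$ or $n$, the ``stripped'' instance is genuinely a legal axiom instance for $S_0$—i.e., the clockwise-order and strict-inequality conditions ($i<j$, $k<\ell$, or $i<j<k$) are preserved after removing $1$ and/or $n$. A secondary subtlety is that removing $1$ from $P+1$ or $n$ from $Q+n$ can change which vertex is the ``first'' one, so the positional hypotheses in the definition of $S_1$ must be rechecked for each descended pair; I expect this to be the one spot where the argument is genuinely fiddly rather than formal, and where the convention $(P;Q)=(\widetilde Q;\widetilde P)$ does real work in reducing the number of subcases. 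Everything else should reduce mechanically to the fact that $S_0$ satisfies the axioms.
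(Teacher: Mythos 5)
Your outline is sound and is essentially the argument the paper has in mind: the paper itself omits the proof of this lemma entirely (``The proof is straightforward, so it is omitted''), and the natural verification is exactly what you describe — characterize $S_1\setminus S_0$ as the pairs $(P+1;Q+n)$ with stripped pair in $S_0$ (plus the $(\widetilde{Q};\widetilde{P})$ mirrors), check the $(n,1)$-BEP directly from that description, and verify each Grassmann--Pl\"ucker-type axiom by stripping $1$ and $n$ (matched as first entries of $P$ and $Q$, so the Subset axiom applies) to descend to an axiom instance in $S_0$ and then pulling the conclusion back through the construction. Your flagged subtleties (rechecking the interleaving condition $1<a_1<b_1<n$ for descended and pulled-back pairs, and consistent use of the flip convention) are indeed where the only real care is needed, and none of those checks fails, so the proposal is correct in approach even though the case analysis is sketched rather than executed.
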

\begin{proof}
The proof is straightforward, so it is omitted.
\end{proof}

By assumption, $S_{0}$ is the maximal electrical positroid for which any circular planar graph $G$ has $\pi(G)\neq S_{0}$. Thus, there exists a graph $G_{1}$ be a graph such that $\pi(G_{1})=S_{1}$, and $G_{1}$ may be taken to have a boundary edge $(n,1)$ by Lemma \ref{graphBEPBSP}. Then, let $G_{0}$ be the result of deleting the boundary edge $(n,1)$. To obtain a contradiction, it is enough to prove that $S_{0}=\pi(G_{0})$.

We now present a series of technical lemmas.

\begin{defn}\label{completedef}
Consider a circular pair $(P;Q)\in S_{0}$ for which $1,n\notin P\cup Q$. We will assume, for the rest of this section, that $(P+1;Q+n)$ is a circular pair. $(P;Q)$ is said to be is \textbf{incomplete} if $(P+1;Q+n)\notin S_{0}$, and \textbf{complete} if $(P+1; Q+n)\in S_{0}$.
\end{defn}

\begin{lemma}\label{crossing}
Let $(P;Q)=(a_{1},\ldots,a_{k};b_{1},\ldots,b_{k})\in S_{0}$ be an incomplete circular pair, such that $(P+1;Q+n)$ is a circular pair (and is not in $S_{0}$). Furthermore, assume that $(P;Q)$ is \textbf{minimal}, that is, $(P-a_{k};Q-b_{k})$ is complete. Then, for all $0\le i\le k-1$, $(a_i;b_{i+1}),(a_{i+1};b_i)\in S_{0}$. 
\end{lemma}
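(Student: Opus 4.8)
The statement to prove is Lemma \ref{crossing}: given a minimal incomplete circular pair $(P;Q) = (a_1,\dots,a_k;b_1,\dots,b_k) \in S_0$ with $(P+1;Q+n)$ a circular pair not in $S_0$, we must show $(a_i;b_{i+1}),(a_{i+1};b_i) \in S_0$ for all $0 \le i \le k-1$. Here I read $a_0 = 1$ and $b_{k+1} = n$ (these are the "virtual" endpoints being adjoined), so the claim packages together the statements $(1;b_1) \in S_0$, $(a_k;n) \in S_0$, and $(a_i;b_{i+1}),(a_{i+1};b_i) \in S_0$ for the interior indices. The natural engine here is Axiom 1, the Grassmann-Pl\"ucker axioms \ref{axside}--\ref{axsep}: these relate, for a circular pair $(\mathcal P;\mathcal Q)$ with chosen $a,b$ on the $P$-side and $c,d$ on the $Q$-side, the four "one-deletion" pairs and the pair-deletion together with $(\mathcal P;\mathcal Q)$ itself.

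First I would set up the right ambient circular pair to feed into Axiom 1. The idea is to take $\mathcal P = P + 1$ (inserting $1$ at the front) and $\mathcal Q = Q + n$ (inserting $n$ at the end), which is a circular pair by hypothesis, and then apply axiom \ref{axsep} or \ref{axside} with the distinguished elements $a = 1$, $b = a_{i+1}$ (or an appropriate pair straddling the index) and $c = b_{i+1}$, $d = n$. Minimality says $(P - a_k; Q - b_k)$ is complete, i.e. $(P - a_k + 1; Q - b_k + n) \in S_0$; this is the one "large" element of $S_0$ we get to use, and combined with the Subset Axiom \ref{axsub} it hands us many smaller pairs of $S_0$ that still contain the $1$ and the $n$. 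Meanwhile incompleteness says $(P+1;Q+n) \notin S_0$. I would run an induction on $i$ (or a downward induction on $k-i$): using axiom \ref{axside} with the appropriate choice of $a,b,c,d$, the hypothesis $(P+1;Q+n) \notin S_0$ forces the "pair-deletion plus full pair" disjunct to fail, so the "crossed" disjunct $(P-a;Q-d),(P-b;Q-c)$-type pairs must lie in $S_0$; peeling off one index at a time and using \ref{axsep}/\ref{axcross} to translate between the crossed and uncrossed configurations yields exactly $(a_i;b_{i+1}),(a_{i+1};b_i) \in S_0$. The boundary cases $i=0$ and $i=k-1$ (producing $(1;b_1)$ and $(a_k;n)$) should come out of the same argument with $a=1$ or $d=n$ as the distinguished element, using that $(P-a_k;Q-b_k)$ being complete directly supplies $(P-a_k+1;Q-b_k+n)\in S_0$, from which $(a_k;n)$ follows by iterated application of the Subset Axiom.

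The main obstacle, I expect, is the bookkeeping of cyclic order: one must verify at each step that the pairs written down are genuinely circular pairs (the insertion positions of $1$ and $n$ must be consistent with $1 < a_1 < b_1 < n$ and with the ambient cyclic order $1,2,\dots,n$), and that the roles of $a,b$ versus $c,d$ in Axiom 1 are assigned correctly — in particular that $1$ and $n$ end up on the correct sides after possibly invoking the convention $(P;Q) = (\widetilde Q; \widetilde P)$ from Remark \ref{flipPQ}. A secondary subtlety is making the induction hypothesis strong enough: to kill the "complete" disjunct of axiom \ref{axside} at stage $i$ one wants not just $(P+1;Q+n)\notin S_0$ but that certain intermediate pairs-with-$1$-and-$n$-adjoined are also outside $S_0$, which should follow from incompleteness together with the Subset Axiom run in the contrapositive (if a large pair is absent, and a candidate super-pair would force it in via \ref{axsub}, that candidate is absent too). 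Once the cyclic-order checks are pinned down, each individual deduction is a one-line application of one of the axioms \ref{axside}, \ref{axcross}, \ref{axsep}, and \ref{axsub}.
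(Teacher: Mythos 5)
Your overall instinct — apply Axiom \ref{axside} to the augmented pair $(\mathcal{P};\mathcal{Q})=(P+1;Q+n)=(1,a_1,\dots,a_k;\,n,b_1,\dots,b_k)$ and use incompleteness to kill the ``pair-deletion plus full pair'' disjunct — is the right one, but your per-index induction does not get off the ground. To invoke Axiom \ref{axside} at stage $i$ with $a=1$, $b=a_{i+1}$, $c=n$, $d=b_{i+1}$, you need \emph{both} $(\mathcal{P}-1;\mathcal{Q}-n)=(P;Q)\in S_0$ and $(\mathcal{P}-a_{i+1};\mathcal{Q}-b_{i+1})\in S_0$; the latter is simply not known for $i+1<k$ (minimality supplies it only for the last connection). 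Your proposed patch — that the intermediate pairs with $1,n$ adjoined are \emph{outside} $S_0$ ``by the Subset Axiom run in the contrapositive'' — is backwards: the contrapositive of Axiom \ref{axsub} passes from an absent sub-pair to an absent super-pair, while the intermediate augmented pairs are sub-pairs of the absent pair $(P+1;Q+n)$, so nothing follows; in fact minimality together with Axiom \ref{axsub} shows that every prefix pair $(1,a_1,\dots,a_j;\,n,b_1,\dots,b_j)$ with $j\le k-1$ lies \emph{in} $S_0$. There is also a misreading of the statement: the convention is $a_0=1$, $b_0=n$ (the $0$th connection of the augmented pair), so the boundary conclusions are $(1;b_1)$ and $(a_1;n)$, not $(a_k;n)$.

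No induction is needed; the paper's ``immediate'' proof is a single application of Axiom \ref{axside} to $(\mathcal{P};\mathcal{Q})$ with $a=1$, $b=a_k$, $c=n$, $d=b_k$. The hypothesis holds because $(\mathcal{P}-1;\mathcal{Q}-n)=(P;Q)\in S_0$ and $(\mathcal{P}-a_k;\mathcal{Q}-b_k)=\bigl((P-a_k)+1;(Q-b_k)+n\bigr)\in S_0$ by minimality. The second disjunct would put $(P+1;Q+n)\in S_0$, contradicting incompleteness, so the first disjunct holds: $(a_1,\dots,a_k;\,n,b_1,\dots,b_{k-1})\in S_0$ and $(1,a_1,\dots,a_{k-1};\,b_1,\dots,b_k)\in S_0$. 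The connections of these two pairs are precisely $(a_{i+1},b_i)$ and $(a_i,b_{i+1})$ for $0\le i\le k-1$, so repeated use of the Subset Axiom extracts all the claimed size-one pairs at once.
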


\begin{proof}
Immediate from Axiom \ref{axside} of Definition \ref{posaxioms}.
\end{proof}

\begin{lemma}\label{trip}
Let $(a,b,c;d,e,f)$ be a circular pair. Then, if $(a;d),(a;f),(b;e),(c;d),(c;f)\in S_{0}$, then $(a;d),(b;e),(b;f),(c;e)\in S_{0}$.
\end{lemma}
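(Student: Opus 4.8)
The plan is to derive Lemma~\ref{trip} purely combinatorially from the electrical positroid axioms, using the hypothesis as a starting point and chaining together the Grassmann-Pl\"{u}cker-type deductions encoded in Axioms~1 and~2. The setup is a size-$3$ circular pair $(a,b,c;d,e,f)$, so all the relevant sub-pairs live on the six boundary vertices $a,b,c,f,e,d$ in clockwise order, and I want to conclude $(b;f)$ and $(c;e)$ are in $S_0$ (the other two, $(a;d)$ and $(b;e)$, are already hypotheses). First I would look for an instance of Axiom~\ref{axside} or its converse Axiom~\ref{axsep}: take $P=\{b,c\}$, $Q=\{e,f\}$ (or some size-$2$ pair among the given vertices), and try to use $(P-x;Q-z),(P-y;Q-w)\in S_0$ — drawn from the five given $1\times 1$ pairs — to force either the ``crossed'' pair or the pair-of-size-$2$-together into $S_0$. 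For instance, from $(c;f),(c;d)\in S_0$ and the structure of the axioms I expect to be able to produce a $2\times 2$ pair like $(b,c;e,f)$ or $(a,c;d,f)$, and then peel off a sub-pair via the Subset Axiom~\ref{axsub} to extract the desired $1\times 1$ pairs.

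More concretely, the key steps I anticipate are: (1) combine the hypotheses in pairs via Axiom~\ref{axcross} (the converse direction: ``crossed'' implies ``uncrossed'') — note that $(a;f),(c;d)\in S_0$ are a ``crossed'' configuration for the pair with rows $\{a,c\}$ and columns $\{d,f\}$, so Axiom~\ref{axcross} gives $(a;d),(c;f)\in S_0$, consistent with but not yet new; (2) apply Axiom~\ref{axside} to a carefully chosen size-$2$ pair to get a disjunction, and then rule out the bad branch or show both branches lead to the conclusion; (3) use Axiom~2 (the $(N+1)\times N$ relations) if the row and column sets I need are of unequal size — this is likely where the asymmetry between the input set $\{(a;d),(a;f),(b;e),(c;d),(c;f)\}$ and the output set $\{(a;d),(b;e),(b;f),(c;e)\}$ gets resolved, since moving from column $d$ or $f$ to column $e$ is the ``middle column'' move handled by axioms~\ref{axmid}/\ref{axleft}/\ref{axright}; (4) finish by the Subset Axiom to drop down to size $1$ where needed. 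I would organize the argument as a short chain of ``$\in S_0$'' deductions, each annotated with the axiom used and the choice of $P,Q,a,b,c,d$.

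The main obstacle I expect is bookkeeping the orientation/position data correctly: the axioms are stated for specific orderings (row $a$ above row $b$, column $c$ left of column $d$, and the clockwise condition $a_1,\dots,a_N,b_N,\dots,b_1$), and with six vertices $a,b,c,f,e,d$ in clockwise order one must be careful about which of $\{(P+x;Q+y)\}$ is actually a legitimate circular pair and which vertex plays the role of $a_i$ vs. $b_j$. In particular the convention $(P;Q)=(\widetilde{Q};\widetilde{P})$ means some of the "five given pairs" may need to be rewritten before an axiom applies. The other potential snag is that a single application of the axioms may only yield a disjunction (Axiom~\ref{axside} or~\ref{axmid}), and I may need to run the argument twice, using the first conclusion to feed the second, or to argue by a minimality/contradiction device as in the surrounding proof; but given that Lemma~\ref{trip} is used as a ``technical lemma'' feeding the main induction, I expect the intended proof is a direct two-or-three-line axiom chase, and the real work is just selecting the right instances. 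I would also double-check the degenerate possibility that some of the named vertices coincide with $1$ or $n$ is excluded by context (it is, since these are the free boundary vertices in the incomplete-pair setup).
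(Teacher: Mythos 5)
There is a genuine gap: your write-up is a plan rather than a proof, and the one concrete application of Axiom~\ref{axcross} you do carry out (rows $\{a,c\}$, columns $\{d,f\}$) is exactly the instance that yields nothing new, after which you pivot to a speculative route through Axiom~\ref{axside}, the second family of axioms (\ref{axmid}/\ref{axleft}/\ref{axright}), and the Subset Axiom~\ref{axsub} without ever completing it. In particular, Axioms~\ref{axside} and~\ref{axmid} only give disjunctions, and you never show how to eliminate the unwanted branch; nor do you exhibit the chain of deductions producing $(b;f)$ and $(c;e)$. So as it stands, the conclusion is not established.

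The missing idea is simply the right choice of $2\times 2$ circular pairs: the hypotheses already contain the \emph{crossed} connections of two such pairs. For the circular pair $(a,b;e,f)$ (note $a,b,f,e$ are in clockwise order), the crossing connections are $(a;f)$ and $(b;e)$, both given, so Axiom~\ref{axcross} immediately yields $(a;e),(b;f)\in S_{0}$. For the circular pair $(b,c;d,e)$ (with $b,c,e,d$ clockwise), the crossing connections are $(b;e)$ and $(c;d)$, again both given, so Axiom~\ref{axcross} yields $(b;d),(c;e)\in S_{0}$. Together with the hypotheses $(a;d),(b;e)\in S_{0}$ this is exactly the conclusion of Lemma~\ref{trip}; this two-instance application of Axiom~\ref{axcross} is precisely the paper's ``immediate'' proof, and no disjunctive axiom, no relation of the second type, and no use of the Subset Axiom is needed.
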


\begin{proof}
Immediate from Axiom \ref{axcross}.
\end{proof}

\begin{lemma}\label{walk}
If $(a_1,\ldots a_n; b_1, \ldots b_n)\in S_{0}$, $(a_{n+1};b_{n+1})\in S_{0}$, and $a_{n},a_{n+1},b_{n+1},b_{n}$ appear in clockwise order, then $(a_1,\ldots,a_{n-1},a_{n+1}; b_1, \ldots,b_{n-1}, b_{n+1})\in S_{0}$.
\end{lemma}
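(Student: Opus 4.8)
The statement of Lemma~\ref{walk} is a ``swap one vertex'' assertion: given a large connection $(a_1,\dots,a_n;b_1,\dots,b_n)\in S_0$ and a small one $(a_{n+1};b_{n+1})\in S_0$ whose four relevant boundary vertices are interleaved with the last pair $a_n,b_n$ in clockwise order, we may replace $a_n,b_n$ by $a_{n+1},b_{n+1}$ and stay in $S_0$. The natural strategy is induction on $n$, with the base case $n=1$ being exactly the $2\times 2$ Grassmann--Pl\"ucker content: apply Axiom~\ref{axcross} (or Lemma~\ref{trip} in the degenerate two-row case) to $(a_1;b_1),(a_2;b_2)\in S_0$ together with the clockwise ordering $a_1,a_2,b_2,b_1$ to conclude $(a_1;b_2)\in S_0$. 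Actually even this requires care, since Axiom~\ref{axcross} as stated produces membership of two pairs from membership of two pairs, and one must check the indices line up; I would unwind the axiom with $P=\{a_1,a_2\}$, $Q=\{b_1,b_2\}$, $a=a_1$, $b=a_2$, $c=b_2$, $d=b_1$.

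For the inductive step, the idea is to ``peel off'' $a_n,b_n$ using one of the axioms so that the swap happens in a setting of one lower size. Concretely, suppose the claim holds for connections of size $n-1$. Given $(a_1,\dots,a_n;b_1,\dots,b_n)\in S_0$, by the Subset Axiom~\ref{axsub} we get $(a_1,\dots,a_{n-1};b_1,\dots,b_{n-1})\in S_0$, and we also have $(a_{n+1};b_{n+1})\in S_0$. But the induction hypothesis would swap the wrong vertex. Instead I expect the right move is: apply Axiom~\ref{axside} to the quadruple of indices $(a_{n-1},a_n;b_n,b_{n-1})$ — wait, one needs the ``side'' configuration. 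The cleaner approach is to use Axiom~\ref{axside} or~\ref{axsep} to the four vertices $a_n, a_{n+1}$ (playing the roles of the two ``$P$-side'' indices) and $b_{n+1}, b_n$ (the two ``$Q$-side'' indices), sitting inside the circular pair $(a_1,\dots,a_n,a_{n+1}; b_1,\dots,b_n,b_{n+1})$ — if that is a circular pair. Since $a_n,a_{n+1},b_{n+1},b_n$ are clockwise, inserting $a_{n+1}$ right after $a_n$ and $b_{n+1}$ right before $b_n$ does yield a circular pair of size $n+1$. Then Axiom~\ref{axsep} applied with $P=\{a_1,\dots,a_n,a_{n+1}\}$, $a=a_n$, $b=a_{n+1}$, $Q=\{b_1,\dots,b_n,b_{n+1}\}$, $c=b_{n+1}$, $d=b_n$ says: if $(P-a-b;Q-c-d)$ and $(P;Q)$ are in $S_0$, then $(P-a;Q-c),(P-b;Q-d)\in S_0$; here $(P-a-b;Q-c-d)=(a_1,\dots,a_{n-1};b_1,\dots,b_{n-1})$, which we have, and $(P-b;Q-d)=(a_1,\dots,a_n;b_1,\dots,b_{n-1},b_{n+1})$, which is progress but not yet the target. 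This suggests a two-step argument: first swap $b_n\mapsto b_{n+1}$ (or its reverse), then swap $a_n\mapsto a_{n+1}$, each step being an application of Axiom~\ref{axside}/\ref{axsep}/\ref{axcross} in the appropriate degenerate form, possibly invoking Lemmas~\ref{crossing} and~\ref{trip} to supply the auxiliary $1\times 1$ connections that the axioms require as hypotheses.

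So the key steps, in order, are: (1) establish the base case $n=1$ directly from Axiom~\ref{axcross}; (2) for the inductive step, recognize $(a_1,\dots,a_n,a_{n+1};b_1,\dots,b_n,b_{n+1})$ as a circular pair and identify which axiom among~\ref{axside},~\ref{axsep},~\ref{axcross} takes the hypotheses $(a_1,\dots,a_n;b_1,\dots,b_n)\in S_0$ and $(a_{n+1};b_{n+1})\in S_0$ (plus, if needed, $1\times 1$ connections obtained from Lemma~\ref{crossing}) to the conclusion; (3) possibly iterate: first replace $b_n$ by $b_{n+1}$, then $a_n$ by $a_{n+1}$, reducing each to the previously handled cases or to the base case; (4) verify all clockwise-ordering and circular-pair conditions at each insertion, using the convention $(P;Q)=(\widetilde Q;\widetilde P)$ where orientation forces it.

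\textbf{Main obstacle.} The delicate part will be step (2)/(3): the electrical positroid axioms are one-directional implications with rigid index hypotheses, so to chain them I will need to produce the intermediate connections (the $1\times 1$ pairs $(a_i;b_{i+1}),(a_{i+1};b_i)$ and their analogues) that appear as hypotheses — this is precisely what Lemmas~\ref{crossing} and~\ref{trip} are set up to supply, and I anticipate the bookkeeping of applying them along the ``walk'' from $a_n$ to $a_{n+1}$ is where the real work lies. A secondary subtlety is handling the convention $(P;Q)=(\widetilde Q;\widetilde P)$ consistently: depending on whether $a_{n+1},b_{n+1}$ fall on the $P$-side or $Q$-side arc between $a_n$ and $b_n$, one may need to flip one of the pairs to put it into the standard form demanded by the axioms, and I will want to check that the swap conclusion is stable under this flip.
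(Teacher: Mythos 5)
There is a genuine gap: the concrete axiom applications you propose either run in the wrong direction or rest on memberships that are not available, and the fallback plan goes through an intermediate pair that need not lie in $S_0$. First, the base case: for $n=1$ the conclusion of Lemma~\ref{walk} is $(a_2;b_2)\in S_0$, which is literally a hypothesis, so nothing is to be proved; the statement you try to derive instead, $(a_1;b_2)\in S_0$, does not follow and is false in general (take $G$ to be two disjoint edges $a_1b_1$ and $a_2b_2$; then $\pi(G)$ is an electrical positroid containing $(a_1;b_1)$ and $(a_2;b_2)$ but not $(a_1;b_2)$). Moreover Axiom~\ref{axcross} goes the other way: its hypotheses are the crossed pairs $(a_1;b_2),(a_2;b_1)$ and its conclusions the nested ones. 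Second, in the inductive step, your use of Axiom~\ref{axsep} requires $(a_1,\ldots,a_{n+1};b_1,\ldots,b_{n+1})\in S_0$ as a hypothesis, which is not among the lemma's hypotheses and can fail (already for $n=1$: a single interior vertex joined to all four boundary vertices gives all $1\times1$ connections but no $2\times 2$ connection). Third, the proposed two-step swap (first $b_n\mapsto b_{n+1}$, then $a_n\mapsto a_{n+1}$) passes through $(a_1,\ldots,a_n;b_1,\ldots,b_{n-1},b_{n+1})$, which also need not be in $S_0$: realize the $n$-connection by disjoint paths and add a separate edge $a_{n+1}b_{n+1}$ nested in the region between the path $a_n\to b_n$ and the boundary arc; then $a_n$ and $b_{n+1}$ lie in different components, so the intermediate pair is absent while all hypotheses of the lemma hold.

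The missing idea, which is exactly how the paper argues, is a case analysis on the two crossing singleton connections: either $(a_n;b_{n+1})$ and $(a_{n+1};b_n)$ both lie in $S_0$, in which case the claim follows from the type-2 axioms (Axioms~\ref{axleft} and~\ref{axright}) together with induction on $n$, or they do not, in which case the claim follows from Axiom~\ref{axside} and induction on $n$. This case split is what resolves the disjunction built into Axiom~\ref{axside} (note that applying Axiom~\ref{axside} to the size-two pair $(a_n,a_{n+1};b_n,b_{n+1})$, whose hypotheses $(a_{n+1};b_{n+1})$ and $(a_n;b_n)$ you do have via Axiom~\ref{axsub}, is precisely what forces one of the two cases); your plan, by contrast, never resolves that disjunction and explicitly leaves open which axiom carries the inductive step, so as written it does not constitute a proof.
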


\begin{proof}
If $(a_n;b_{n+1})\in S$ and $(a_{n+1};b_n)\in S$, the claim follows from Axiom \ref{axleft}, Axiom \ref{axright} and induction on $n$. Otherwise, it follows from Axiom \ref{axside} and induction on $n$.
\end{proof}

\begin{lemma}\label{min}
Let $(P;Q)=(a_{1},\ldots,a_{k};b_{1},\ldots,b_{k})\in S_{0}$ be a complete circular pair. Then, $(P-a_{i};Q-b_{i})$ is complete for all $i=1,2,\ldots,k$.
\end{lemma}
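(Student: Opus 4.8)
The plan is to prove the contrapositive in a localized form: if some $(P-a_i;Q-b_i)$ is incomplete, then $(P;Q)$ is incomplete. By the Subset Axiom it suffices to handle the case where $(P-a_i;Q-b_i)$ fails to extend, and by induction on $k$ (stripping one index at a time) it suffices to treat $i=1$, i.e.\ to show: if $(P;Q)=(a_1,\dots,a_k;b_1,\dots,b_k)\in S_0$ is complete, then $(a_2,\dots,a_k;b_2,\dots,b_k)$ is complete. So assume for contradiction that $(P;Q)$ is complete but $(a_2,\dots,a_k;b_2,\dots,b_k)$ is incomplete. Passing to a \emph{minimal} incomplete pair inside $(a_2,\dots,a_k;b_2,\dots,b_k)$ (delete indices from the ``large end,'' i.e.\ from the $a_k,b_k$ side, until the pair becomes incomplete but deleting its last index makes it complete again), I may assume $(a_2,\dots,a_k;b_2,\dots,b_k)$ is itself minimal incomplete, while $(a_1,\dots,a_k;b_1,\dots,b_k)$ is complete — note $a_1$ sits outside (clockwise before) $a_2$, so adding $a_1,b_1$ is the ``opposite end'' from where we minimize.

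The core of the argument is then a two-step propagation using Lemma \ref{crossing} together with Lemma \ref{walk}. Since $(a_2,\dots,a_k;b_2,\dots,b_k)$ is minimal incomplete, Lemma \ref{crossing} gives all the ``adjacent'' singleton connections $(a_i;b_{i+1}),(a_{i+1};b_i)\in S_0$ for the relevant range of $i$ (indexing $a_1=1$-side data as in that lemma), and these are exactly the hypotheses needed to feed Lemma \ref{trip} / the crossing axiom repeatedly. The idea is: completeness of $(a_1,\dots,a_k;b_1,\dots,b_k)$ means $(1,a_1,\dots,a_k;n,b_1,\dots,b_k)\in S_0$; I want to ``slide'' the pair $(1;n)$ inward past $a_1,b_1$ using Lemma \ref{walk} (with the roles of the ambient pair being $(a_2,\dots,a_k;b_2,\dots,b_k)$ and the extra element being handled through the crossing connections supplied by Lemma \ref{crossing}), which would yield $(1,a_2,\dots,a_k;n,b_2,\dots,b_k)\in S_0$ — precisely the statement that $(a_2,\dots,a_k;b_2,\dots,b_k)$ is complete, contradicting incompleteness.

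Concretely I would: (1) reduce to $i=1$ and to a minimal incomplete sub-pair as above; (2) invoke Lemma \ref{crossing} to record the full list of singleton connections $(a_i;b_{i+1}),(a_{i+1};b_i)$; (3) apply Axiom \ref{axcross} (via Lemma \ref{trip}) to upgrade the completeness datum of $(a_1,\dots,a_k;b_1,\dots,b_k)$ at the innermost spot; (4) run Lemma \ref{walk} inductively to transport the ``$(1;n)$'' pair all the way to the front, obtaining $(1,a_2,\dots,a_k;n,b_2,\dots,b_k)\in S_0$; (5) conclude the contradiction. The main obstacle I anticipate is the bookkeeping in step (4): verifying at each step that the clockwise-order hypothesis of Lemma \ref{walk} holds after removing one index and inserting $1$ and $n$, and that the connection being ``walked'' is available either directly from Lemma \ref{crossing} or from a prior application of the crossing axiom — in other words, setting up the induction on $k$ so that the hypotheses of Lemmas \ref{walk} and \ref{trip} are genuinely met rather than just morally true. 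If the straight ``walk'' does not close up because one of the needed $(a_i;b_{i+1})$ connections is missing, the fallback is the alternative branch of Lemma \ref{walk} (which uses Axiom \ref{axside} instead), so the argument should still go through, but the case split is where the care is needed.
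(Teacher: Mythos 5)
Your proposal takes a genuinely different route from the paper, but as written it does not close. The paper's proof is short and direct: it works with the extended pair and applies Axiom \ref{axright} twice to produce the extension of $(P-a_i;Q-b_i)$; no contradiction, no minimality, and none of Lemmas \ref{crossing}, \ref{trip}, \ref{walk} are involved. Indeed, the whole point is that the desired membership is close at hand, since the connections of the extended pair $(P+1;Q+n)=(1,a_1,\ldots,a_k;n,b_1,\ldots,b_k)$ are exactly $(1,n),(a_1,b_1),\ldots,(a_k,b_k)$, so the extension of $(P-a_i;Q-b_i)$ is obtained from the extension of $(P;Q)$ by deleting the single matched connection $(a_i,b_i)$ --- a deletion the Subset Axiom (and, in the paper's formulation, two uses of Axiom \ref{axright}) handles directly. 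Your contrapositive-plus-minimality apparatus is not only heavier than necessary; it has concrete gaps.

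First, the reduction ``by induction on $k$ it suffices to treat $i=1$'' is invalid: repeatedly deleting the first (or last) connection of $(P;Q)$ only ever produces the contiguous pairs $(a_j,\ldots,a_k;b_j,\ldots,b_k)$ (or prefixes), never $(P-a_i;Q-b_i)$ for $1<i<k$, and the inductive hypothesis for size $k-1$ cannot yield completeness of a size-$(k-1)$ pair that still contains $a_1,b_1$; so the interior cases are simply not covered. Second, the passage to a minimal incomplete pair assumes an instance of the very lemma being proved: after truncating $(a_2,\ldots,a_k;b_2,\ldots,b_k)$ to a minimal incomplete $(a_2,\ldots,a_j;b_2,\ldots,b_j)$, your contradiction needs the corresponding pair $(a_1,\ldots,a_j;b_1,\ldots,b_j)$ to be complete, which is completeness-preservation under deleting trailing connections --- exactly what is at issue --- and you do not supply it. Third, the core ``slide $(1;n)$ inward'' step misuses Lemma \ref{walk}: that lemma exchanges the \emph{last} connection of a pair for a new connection of the \emph{same size}; it does not delete an interior connection of the larger extended pair, and the inductive ``transport'' you describe (fed by Lemma \ref{crossing}) is precisely the unverified bookkeeping you flag yourself. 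Each of these gaps disappears once you work directly with the extended pair, as the paper does.
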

\begin{proof}
Applying Axiom \ref{axright} with $a_1,a_i,a_k,b_1$ to $(P;Q-b_k)$ gives $(P-a_i;Q-b_k)\in S$. Then another application of Axiom \ref{axright}, to $(Q;P-a_i)$ with $b_1,b_i,b_k,a_i$ gives the desired result.
\end{proof}

\begin{lemma}\label{rtol}
Let $(P,a,b,c,Q;R,d,e,f,T)$ be a circular pair, where $P,Q,R,T$ are sequences of boundary vertices. Suppose that
\begin{align*}
(a;d),(a;e),(b;d),(b;e),(b;f),(c;e),(c;f)&\in S, \\
(P,a,b;R,d,e)&\in S,\text{ and}\\
(P,a,c,Q;R,d,f,T)&\in S
\end{align*}
Then, $(P,a,b,Q;R,d,e,T)\in S$.
\end{lemma}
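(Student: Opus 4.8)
The plan is to prove Lemma \ref{rtol} by a "walking" argument that transforms the large complete pair $(P,a,c,Q;R,d,f,T)$ into $(P,a,b,Q;R,d,e,T)$ one index-swap at a time, using the small-pair hypotheses together with Lemmas \ref{trip} and \ref{walk} (equivalently, Axioms \ref{axcross}, \ref{axside}, \ref{axleft}, \ref{axright}) to feed each step. The key observation is that the hypotheses on the $1\times 1$ pairs — namely $(a;d),(a;e),(b;d),(b;e),(b;f),(c;e),(c;f)\in S$ — are exactly the inputs one needs to run Lemma \ref{trip} on the triple $(a,b,c;d,e,f)$, yielding also $(b;f),(c;e)$ and, combined with the inputs, the full set of $1\times 1$ connections among $\{a,b,c\}$ and $\{d,e,f\}$ that will be required when we splice $b,e$ in place of $c,f$.

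First I would set up the induction: by Lemma \ref{walk} applied to $(P,a,c,Q;R,d,f,T)$ and the single pair $(b;e)$ (noting $c,b,e,f$ — or the appropriate four vertices — appear in clockwise order since $(P,a,b,c,Q;R,d,e,f,T)$ is a circular pair), I can try to replace the segment ending in $c,\dots,f$ by one ending in $b,\dots,e$. The subtlety is that Lemma \ref{walk} as stated removes the last element of an ordered pair and appends a new one; here $c$ and $f$ sit in the interior of the ordered sets, flanked by $Q$ and $T$. So the real work is a careful interior version of the walk: I would first use the convention $(P;Q)=(\widetilde Q;\widetilde P)$ and the Subset Axiom to reduce to a configuration where the vertices to be swapped are at the boundary of the relevant index sets, then apply Lemma \ref{walk} (or directly Axioms \ref{axleft}/\ref{axright}/\ref{axside}) to slide $c\to b$ on the $P$-side and $f\to e$ on the $Q$-side, using $(P,a,b;R,d,e)\in S$ as the "already-known" smaller complete pair that anchors the left end.

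The main obstacle I anticipate is bookkeeping the clockwise-order constraints and making sure that at each intermediate step the pair we write down is genuinely a circular pair and that the axiom being invoked has its hypotheses satisfied — in particular, that after swapping $c$ for $b$ we still have access to the $1\times 1$ facts $(b;f),(c;e)$ (supplied by Lemma \ref{trip}) needed to close the induction on the other coordinate. A secondary obstacle is that $Q$ and $T$ may be nonempty, so the swap is not literally "at the end"; I expect to handle this either by peeling off $Q$ and $T$ via the Subset Axiom, performing the swap on the truncated pair, and then re-growing $Q,T$ using the BEP/BSP-free machinery of Lemma \ref{walk} in reverse — or, more cleanly, by proving a strengthened interior form of Lemma \ref{walk} first and then applying it twice (once for $c\rightsquigarrow b$, once for $f\rightsquigarrow e$). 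Modulo these arrangements, each individual step is an immediate application of one of the first six electrical positroid axioms, so once the inductive scaffolding is in place the verification is routine.
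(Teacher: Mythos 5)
Your primary route has a genuine gap. The plan is to peel $Q$ and $T$ off the big hypothesis pair via the Subset Axiom, perform the swap on the truncated pair, and then ``re-grow'' $Q,T$ afterwards. But nothing in the axiom system lets you re-grow: the Subset Axiom only passes from larger pairs in $S$ to smaller ones, and Lemma \ref{walk} does not enlarge a pair at all --- it \emph{replaces} the last connection of an $n$-connection pair by a new one, keeping the size fixed. More fundamentally, membership of small circular pairs in $S$ never implies membership of a larger pair: every electrical positroid axiom that outputs a pair with $k$ connections has, among its inputs, a pair with at least $k$ connections (this is exactly why, e.g., $(a;d),(b;e)\in S$ does not force $(a,b;d,e)\in S$). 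So once you truncate down to $(P,a,b;R,d,e)$ --- which is already a hypothesis, so the truncation gains nothing --- there is no mechanism to re-attach $Q;T$, and the argument stalls. A smaller issue: your invocation of Lemma \ref{trip} misstates its hypotheses; it needs $(a;f)$ and $(c;d)$, which you are not given, and its conclusions $(b;f),(c;e)$ are already hypotheses of Lemma \ref{rtol}, so it contributes nothing here.

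The repair is essentially your secondary suggestion (the ``interior'' swap), but it must be carried out by applying the axioms directly to the \emph{full-length} pairs so that $Q$ and $T$ never leave the picture; this is what the paper does. Concretely: first extend $(b;f)\in S$ along $P$ and $R$ by induction, using Axiom \ref{axleft}, to get $(P,b;R,f)\in S$ and then $(P,a,b;R,d,f)\in S$. Next, writing $q,t$ for the first elements of $Q,T$, apply Axiom \ref{axleft} to $(P,a,b,c,q,Q';R,d,f,t,T')$ with the four vertices $b,c,q,t$; the inputs are the hypothesis $(P,a,c,Q;R,d,f,T)\in S$ together with the pair just built, and the output is $(P,a,b,Q;R,d,f,T)\in S$ --- the swap $c\rightsquigarrow b$ performed with $Q,T$ still attached. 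Finally, one application of Axiom \ref{axright} (to the flipped pair, using the hypothesis $(P,a,b;R,d,e)\in S$) performs the swap $f\rightsquigarrow e$ and yields $(P,a,b,Q;R,d,e,T)\in S$. So the single-swap philosophy is sound, but the swaps must be executed as axiom applications on the large pairs themselves, not by truncating and rebuilding.
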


\begin{proof}
First, write $P=P'\cup\{p\}, R=R'\cup\{r)\}$, where $p$ and $r$ are the last elements of the ordered sets $P,R$, respectively. Then, if $(P',b;R,f)\in S$, an application of Axiom \ref{axleft} on $(f,e,p,P';b,r,R')$ with $f,e,r,p$ yields $(P,b;R,f)\in S$. Similarly, we find by induction that $(b;f)\in S\Rightarrow (P,b;R,f)\in S$. Then, we have $(P,a,b;R,d,f)\in S$ by Axiom \ref{axleft} applied to $(f,e,d,R;b,a,P)$ with $f,e,d,a$. Similarly, write $Q=\{q\}\cup Q', T=\{t\}\cup T'$, where $q,t$ are the first elements of $Q,T$, respectively. By Axiom \ref{axleft} applied to $(P,a,b,c,q,Q';R,d,f,t,T')$ with $b,c,q,t$, we see that $(P,a,b,Q;R,d,f,T)\in S$. The lemma then follows from Axiom \ref{axright} applied to $(T,f,e,d,R;Q,b,a,P)$ with $T,f,e,Q$.
\end{proof}

\begin{lemma}\label{ltol}
Let $P,Q,R,T$ be sequences of indices, and let
$(1,P,a,b,c,Q;n,R,d,e,f,T)$
be a circular pair. Suppose 
\begin{align*}
(a;d),(a;e),(b;d),(b;e),(b;f),(c;e),(c;f)&\in S,\\
(1,P,a,b,Q;n,R,d,e,T)&\in S, \text{ and}\\
(P,a,c,Q;R,d,f,T)\in S. 
\end{align*}
Then $(1,P,a,c,Q;n,R,d,f,T)\in S$.
\end{lemma}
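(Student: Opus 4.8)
The plan is to prove Lemma \ref{ltol} by a reduction to Lemma \ref{rtol}, exploiting the fact that both lemmas describe a "transport" of a circular pair from one configuration to another across the same local data $(a;d),(a;e),(b;d),(b;e),(b;f),(c;e),(c;f)\in S$. The essential difference is that in Lemma \ref{rtol} the vertices $P,R$ lie between $a$ (resp. $d$) and the remaining sequences, whereas in Lemma \ref{ltol} there is a distinguished boundary vertex $1$ (resp. $n$) sitting outside the sequence $P$ (resp. $R$). So the first step is to set up the combinatorial bookkeeping: write $P = \{p\}\cup P'$ and $R = \{r\}\cup R'$ with $p,r$ the first elements, and track carefully which circular pairs the convention $(P;Q)=(\widetilde Q;\widetilde P)$ allows us to rewrite.

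The core of the argument will be an induction that "peels off" the vertex $1$ (and $n$) and pushes it through the sequence $P$, one index at a time, using Axioms \ref{axleft} and \ref{axright} (Lemma \ref{walk} is the packaged form of exactly this kind of sliding move). Concretely, starting from the hypothesis $(1,P,a,b,Q;n,R,d,e,T)\in S$, I would apply Lemma \ref{rtol} to the sub-configuration obtained by regarding $(1,P)$ as the "$P$" of that lemma and $(n,R)$ as its "$R$"; the hypotheses $(a;d),\ldots,(c;f)\in S$ are exactly the local data required, the pair $(1,P,a,b,Q;n,R,d,e,T)\in S$ plays the role of $(P,a,b,Q;R,d,e,T)$, and $(P,a,c,Q;R,d,f,T)\in S$ plays the role of $(P,a,c,Q;R,d,f,T)$ — except that the indices $1,n$ are attached to the first and not present in the second. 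This mismatch is the thing to repair: I would first use the BEP-type reasoning (or directly Axiom \ref{axleft} applied to a configuration ending in $1,n$ and the small pair $(c;f)$, as in the opening moves of the proof of Lemma \ref{rtol}) to upgrade $(P,a,c,Q;R,d,f,T)\in S$ to $(1,P,a,c,Q;n,R,d,f,T)\in S$, and then apply Lemma \ref{rtol} verbatim with the enlarged sequences $1,P$ and $n,R$.

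Once both inputs to Lemma \ref{rtol} carry the prefix $1,n$, that lemma delivers precisely $(1,P,a,b,Q;n,R,d,e,T)\in S$ — but that is a hypothesis, not the conclusion; so I have the roles of $b$ and $c$ swapped relative to what I actually want. The right framing is therefore: apply Lemma \ref{rtol} with $(1,P,a,b,Q;n,R,d,e,T)\in S$ in the role of the "$(P,a,b,Q;R,d,e,T)\in S$" hypothesis and the upgraded $(1,P,a,c,Q;n,R,d,f,T)\in S$ in the role of "$(P,a,c,Q;R,d,f,T)\in S$", reading off $(1,P,a,c,Q;n,R,d,f,T)\in S$ — wait, that is already what we fed in. The honest resolution is that Lemma \ref{ltol} is the mirror image of Lemma \ref{rtol}: in \ref{rtol} one goes from an $e$-pair on the left plus an $f$-pair on the right to an $e$-pair on the right, and here one goes the reverse direction, from an $e$-pair (the $(1,\ldots,e,\ldots)$ hypothesis) plus an $f$-pair (the $(P,\ldots,f,\ldots)$ hypothesis) to an $f$-pair with the $1,n$ prefix. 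So the correct plan is to run the proof of Lemma \ref{rtol} with $e$ and $f$ interchanged and with $b$ and $c$ interchanged throughout, checking that the hypothesis list $(a;d),(a;e),(b;d),(b;e),(b;f),(c;e),(c;f)$ is symmetric enough under this swap (it is not literally symmetric, so this is where care is needed), and filling any gap with an extra invocation of Axiom \ref{axleft}/\ref{axright} or Lemma \ref{walk}.

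The main obstacle, then, is not the high-level strategy but the asymmetry of the local hypothesis set under the $(b,e)\leftrightarrow(c,f)$ swap: the proof of Lemma \ref{rtol} uses the pairs in a specific order, and the mirrored argument will need either a slightly different sequence of axiom applications or a preliminary lemma deriving the "missing" small pairs from those given (for instance deriving $(a;f)$ or $(c;d)$ if the mirror needs them, via Axiom \ref{axcross} applied to $(a,b,c;d,e,f)$ together with the listed pairs). I would handle this by first proving, as a short preliminary claim, that the given seven pairs together with the axioms imply all of $(a;d),(a;e),(a;f),(b;d),(b;e),(b;f),(c;d),(c;e),(c;f)$ lie in $S$ (this is exactly the content of Lemma \ref{trip}-style reasoning), after which the mirrored hypothesis set becomes genuinely symmetric and the proof of Lemma \ref{rtol} transfers line for line, with $1,P$ and $n,R$ in place of $P$ and $R$ and $b,c$ interchanged. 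The routine sliding inductions along $P$, $Q$, $R$, $T$ then go through exactly as in the proof of Lemma \ref{rtol}, so I would state them briefly and refer back rather than rewriting them.
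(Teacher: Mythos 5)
There is a genuine gap, in fact two. Your primary route is circular: the step where you ``upgrade'' $(P,a,c,Q;R,d,f,T)\in S$ to $(1,P,a,c,Q;n,R,d,f,T)\in S$ before invoking Lemma \ref{rtol} is literally the conclusion of Lemma \ref{ltol}, and there is no mechanism for it --- $S$ is only assumed to be an electrical positroid, so no BEP is available (indeed, in the setting where this lemma is used, $S_{0}$ is precisely a positroid \emph{lacking} the $(n,1)$-BEP). You notice the circularity yourself, but the fallback plan does not repair it. That fallback hinges on the ``preliminary claim'' that the seven given singleton pairs force all nine pairs $(x;y)$, $x\in\{a,b,c\}$, $y\in\{d,e,f\}$, to lie in $S$, so that the hypothesis list becomes symmetric under the swap $b\leftrightarrow c$, $e\leftrightarrow f$ and the proof of Lemma \ref{rtol} can be mirrored. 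This claim is false, and it is not what Lemma \ref{trip} says: Lemma \ref{trip} and Axiom \ref{axcross} go from \emph{crossing} connections ($(a;f),(c;d)$) to ``parallel'' ones, never the reverse, and Axiom \ref{axside} only yields a disjunction. Concretely, take boundary vertices $a,b,c,f,e,d$ in clockwise order, interior vertices $u,v$, and edges $au,bu,ue,ud,bv,cv,vf,ve$: the connection set of this graph is an electrical positroid containing all seven listed pairs but neither $(a;f)$ nor $(c;d)$. So the mirrored hypothesis set cannot be manufactured, and the needed pairs under your swap are exactly the two that are missing.

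For comparison, the paper's proof never needs $(a;f)$ or $(c;d)$ and does not route through Lemma \ref{rtol} with enlarged sequences. It first shows $(P,a,c,Q;R,d,e,T)\in S$ by an induction that peels off elements of $Q,T$ and then of $P,R$ using Axiom \ref{axright}; it then applies Axiom \ref{axright} to $(1,P,a,b,c,Q;n,R,d,e,T)$ with $1,b,c,n$ --- combining the hypothesis $(1,P,a,b,Q;n,R,d,e,T)\in S$ with the derived pair --- to obtain the intermediate pair $(1,P,a,c,Q;n,R,d,e,T)\in S$, and finally applies Axiom \ref{axright} once more to $(1,P,a,c,Q;n,R,d,e,f,T)$ with $n,e,f,1$, using the hypothesis $(P,a,c,Q;R,d,f,T)\in S$, to trade $e$ for $f$. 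The key idea your proposal is missing is this two-step passage through the intermediate pair (first replace $b$ by $c$ keeping $e$, then replace $e$ by $f$), which is what lets the argument survive the asymmetry of the seven given pairs.
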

\begin{proof}
With the same notation as in the previous lemma, $(a,c,Q';d,e,T')\in S\Rightarrow (a,c,Q;d,e,T)\in S$ by Axiom \ref{axright} on $(T',t,f,e,d;Q',q,c,a)$ with $t,f,e,q$. Then, an inductive argument shows that $(a,c,Q;d,e,T)\in S$. A similar argument shows that $(P,a,c,Q;R,d,e,T)\in S$. Then, Axiom \ref{axright} applied to $(1,P,a,b,c,Q;n,R,d,e,T)$ with $1,b,c,n$ implies that $(1,P,a,c,Q;n,R,d,e,T)\in S$ and applying Axiom \ref{axright} again to $(1,P,a,c,Q;n,R,d,e,f,T)$ with $n,e,f,1$ yields the desired result.
\end{proof}

\begin{lemma}\label{Pa1Qbn}
Consider a circular pair $(P;Q)=(a_{1},\ldots,a_{k};b_{1},\ldots,b_{k}\}$, and let $(P+a;Q+b)$ be an incomplete circular pair with $a_{k}<a<b<b_{k}$ in clockwise order. Then, any electrical positroid $Z$ satisfying $S_{0}\cup\{(P+1;Q+n)\}\subset Z\subset S_{1}$ contains $(P+a+1;Q+b+n)$.
\end{lemma}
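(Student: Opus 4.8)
The statement should follow by induction on the size $k$ of $(P;Q)$, matching the inductive structure already used in Lemmas~\ref{crossing}--\ref{ltol}. The base case $k=0$ is precisely the hypothesis that $\epair$ lies in $Z$ (since $(P+1;Q+n) = (1;n) \in S_0 \subset Z$ by assumption, and $(1,a;n,b)$ being an incomplete circular pair means we must produce $(a,1;b,n) = (1,a;n,b)$-type insertion; more carefully, $(P+a+1;Q+b+n)$ with $P=Q=\emptyset$ is $(a,1;b,n)$, and one checks this is forced into $Z$ because $Z \supseteq S_0$ contains $(a;b)$ by incompleteness-minimality arguments and $Z \subseteq S_1$ was built by exactly adjoining such pairs). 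So the first thing I would do is carefully unwind what ``incomplete'' buys us at the bottom.

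\textbf{Inductive step.} Assume the result for all smaller circular pairs. Given $(P;Q) = (a_1,\dots,a_k;b_1,\dots,b_k)$ with $(P+a;Q+b)$ incomplete and $a_k < a < b < b_k$, I would first apply the Subset Axiom (\ref{axsub}) and minimality considerations to locate a sub-pair of $(P+a;Q+b)$ that is \emph{minimal incomplete} in the sense of Lemma~\ref{crossing}, then invoke Lemma~\ref{crossing} to extract the crossing relations $(a_i;b_{i+1}),(a_{i+1};b_i)\in S_0$ for the relevant consecutive indices. These are exactly the $1\times1$ hypotheses feeding Lemmas~\ref{rtol} and~\ref{ltol}. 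The strategy is then: peel off $a_k$ (equivalently $b_k$) using Lemma~\ref{min} if $(P;Q)$ happens to be complete, or use the minimal-incomplete structure otherwise; apply the inductive hypothesis to the shorter pair $(a_1,\dots,a_{k-1}+a;b_1,\dots,b_{k-1}+b)$ to get it into $Z$; and finally ``walk'' the inserted vertices $1$ and $n$ back in using Lemma~\ref{walk} together with the $1\times1$ relations from Lemma~\ref{crossing}. When the straightforward walk is obstructed by the ordering of $a_k, a, b, b_k$, I would fall back on Lemma~\ref{rtol} (to move from a right-hand configuration) or Lemma~\ref{ltol} (to move a pair past the inserted $1$ and $n$ on the left), whose hypotheses are tailored precisely to this situation.

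\textbf{Main obstacle.} The delicate point is the interaction between the two simultaneous insertions ($a$ together with $1$ on the $P$-side, and $b$ together with $n$ on the $Q$-side) and the requirement that every intermediate circular pair we write down is actually a valid circular pair lying between $S_0$ and $S_1$. Because $Z$ is only assumed to satisfy the positroid axioms and the sandwich $S_0 \cup \{(P+1;Q+n)\} \subseteq Z \subseteq S_1$, I cannot freely use membership facts that hold in $S_1$ but might fail in $Z$; every pair I claim lies in $Z$ must be produced by an axiom application starting from $S_0$-pairs and the single extra pair $(P+1;Q+n)$. Keeping this bookkeeping honest — in particular checking that the pairs appearing in the hypotheses of Lemmas~\ref{rtol} and~\ref{ltol} are all in $S_0$ (via completeness/incompleteness of the relevant truncations and Lemma~\ref{crossing}), not merely in $S_1$ — is where the real work lies. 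A secondary subtlety is the clockwise-order hypothesis $a_k < a < b < b_k$: I expect to need a case split according to where $1$ and $n$ sit relative to the other indices (they are forced to be ``outermost'' since $1 < a_1$ and $b_k < n$ in the cyclic order for $(P+1;Q+n)$ to be a circular pair), and to handle the $(P;Q) = (\widetilde Q;\widetilde P)$ convention consistently so that the boundary-edge direction $(n,1)$ versus $(1,n)$ does not cause a sign/orientation slip.
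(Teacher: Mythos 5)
There is a genuine gap here, and your plan also misses the much shorter route the paper takes. The paper's proof is not inductive: it first observes that every element of $Z\setminus S_{0}$ must be of the form $(P'+1;Q'+n)$, i.e.\ must contain \emph{both} boundary vertices $1$ and $n$. It then applies Axiom \ref{axside} a single time to the pair $(P+a+1;Q+b+n)$ with the four distinguished indices $1,a,b,n$, the two hypothesis pairs being $(P+1;Q+n)\in Z$ and $(P+a;Q+b)\in S_{0}\subset Z$ (the latter is exactly what incompleteness of $(P+a;Q+b)$ provides). Either the desired pair $(P+a+1;Q+b+n)$ lies in $Z$, or the two ``crossed'' pairs $(P+1;Q+b)$ and $(P+a;Q+n)$ do; each of these contains only one of $1,n$, so by the opening observation they lie in $S_{0}$, and Axiom \ref{axcross} applied inside $S_{0}$ then forces $(P+1;Q+n)\in S_{0}$, a contradiction. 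Your sketch never isolates the structural fact about $Z\setminus S_{0}$, and without it you have no mechanism for downgrading ``lies in $Z$'' to ``lies in $S_{0}$'' --- which is precisely the bookkeeping you yourself flag as the main obstacle.

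Two steps of your plan fail as written. In the base case you argue that the pair is ``forced into $Z$ because \ldots $Z \subseteq S_1$ was built by exactly adjoining such pairs'': since the containment is $Z\subseteq S_{1}$, membership in $S_{1}$ gives no information about membership in $Z$, so this does not prove even the $k=0$ case. In the inductive step, after applying the inductive hypothesis to a shortened pair you must re-insert $a_k,b_k$ into a pair already containing $1$ and $n$; the tools you cite (Lemmas \ref{walk}, \ref{rtol}, \ref{ltol}) may indeed be applied with $S$ taken to be $Z$, but their hypotheses include large circular pairs containing both $1$ and $n$ (for instance $(1,P,a,b,Q;n,R,d,e,T)$ in Lemma \ref{ltol}) whose membership in $Z$ is exactly what is at issue, and Lemma \ref{crossing} only supplies $1\times 1$ pairs in $S_{0}$. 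So the induction does not close; to complete each inductive step you would need the direct two-axiom argument above anyway, at which point the induction is superfluous.
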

\begin{proof}
It is easy to see that any element of $Z\setminus S_{0}$ must be of the form $(P'+1; Q'+n)$, for some $P',Q'$. By Axiom \ref{axside}, $(P+1;Q+n)\in Z$ and $(P+a;Q+b)\in Z$ implies that either $(P+a+1;Q+b+n)\in Z$, or $(P+a+1;Q+b+n)\notin Z$ and $(P+1;Q+b),(P+a,Q+2)\in Z$. We are done in the former case, so assume for sake of contradiction that we have the latter. $(P+1;Q+b),(P+a,Q+2)$ are not of the form $(P'+1;Q'+n)$, so cannot lie in $Z\setminus S$; thus, $(P+1;Q+b),(P+a,Q+2)\in S$. Finally, Axiom \ref{axcross} yields us $(P+1;Q+n)\in S$, a contradiction, so we are done.
\end{proof}

\begin{defn}
Two pairs of indices $(i,j)$ and $(i',j')$ are said to \textbf{cross} if $i<i'<j'<j$ and $(i;j'),(i';j)\in S$.
\end{defn}

\begin{defn}\label{betternotation}
For ease of notation, denote the sequence of indices $a_{k},\ldots,a_{\ell}$ by $A_{k,\ell}$.
\end{defn}

We now algorithmically construct a set $\mc{P}$ of circular pairs, which we will call \textbf{primary} circular pairs. We will use this notion to eventually prove Lemma \ref{uniclos}, a key ingredient in our proof of the main theorem. The construction is as follows: begin by placing $(1;n)\in\mc{P}$. Then, for each $(P;Q)=(A_{1,i-1}; B_{1,i-1})\in\mc{P}$, if we also have $(P;Q)\in S_{0}$, perform the following operation.

\begin{itemize}
\item Let $a$ be the first index appearing clockwise from $a_{i-1}$ such that there exists $c$ with $(a,c)$ crossing $(a_{i-1},b_{i-1})$, and also $(A_{2,i-1}, a; B_{2,i-1}, c)\in S$. If $a$ does not exist, stop. Otherwise, with $a$ fixed, take $c$ to be the first index appearing counterclockwise from $b_{i-1}$ satisfying these properties.

\item If $a$ exists, add $(A_{1,i-1}, a; B_{1,i-1},c)$ to $\mc{P}$, and remove $(P;Q)=(A_{1,i-1};B_{1,i-1})$.

\item Similarly, let $b$ to be the largest index counterclockwise from $b_{i-1}$ such that there exists $d$ with $(d,b)$ crossing $(a_{i-1},b_{i-1})$ and $(a_2,\ldots d; b_2,\ldots b_i)\in S$. If $b$ does not exist, stop. Otherwise, with $b$ fixed, take $d$ to be the first index clockwise from $a_{i-1}$ with these properties. 

\item If $a\neq d$ and $b\neq c$ (note that if $a=d$, then $b=c$), then add $(A_{1,i-1}, d; B_{1,i-1},b)$ to $\mc{P}$. Note that $c\le d$ or else, by \ref{axside}, $c$ could originally have been set to $d$.
\end{itemize}

It is easily seen that at any time, the algorithm may be performed on the elements of $\mc{P}$ in any order, and that it will eventually terminate, when the operation described above results in no change in $\mc{P}$ for all $(P;Q)\in\mc{P}$.

\begin{defn}\label{connset}
For a circular pair $(P;Q) = (p_1, \ldots, p_k; q_1, \ldots, q_k)$, define $E(P;Q) = \{ \{p_i, q_i\} \mid i \in \{1, \ldots, k \}\}$. We will take $E(P;Q)$ to be an ordered set and abusively refer to its elements as \textbf{connections}.
\end{defn}

\begin{lemma}\label{PCPcont}
For any incomplete circular pair $(P;Q)$, there exists a circular pair $(P';Q')\in \mc{P}$ such that any electrical positroid $Z$ satisfying $S_{0}\cup \{(P';Q')\}\subset Z\subset S_{1}$ contains $(P+1;Q+n)$.
\end{lemma}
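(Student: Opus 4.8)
The plan is to marry the greedy structure of the primary-pair algorithm with the sliding lemmas (Lemmas \ref{trip}, \ref{walk}, \ref{rtol}, \ref{ltol}, \ref{Pa1Qbn}), which were set up for exactly this purpose. Throughout, recall that every element of $Z\setminus S_0$ has the form $(P''+1;Q''+n)$, so proving ``$(P+1;Q+n)\in Z$'' amounts to forcing it from $S_0\cup\{(P';Q')\}$ using the electrical positroid axioms, exactly as in the proof of Lemma \ref{Pa1Qbn}.

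First I would reduce to the case of a \emph{minimal} incomplete pair. By Lemma \ref{min}, completeness of a circular pair passes to every matched single-deletion; contrapositively, incompleteness is inherited by every circular pair containing $(P;Q)$ as a matched sub-pair. Hence $(P;Q)$ contains a minimal incomplete sub-pair $(P_0;Q_0)$, obtained by deleting a set of matched connections, and it suffices to (a) find a primary pair $(P';Q')$ such that $S_0\cup\{(P';Q')\}\subset Z$ forces $(P_0+1;Q_0+n)$, and then (b) re-insert the deleted connections one at a time until we reach $(P+1;Q+n)$. In step (b), a connection lying strictly inside the span of the current pair is restored using Lemma \ref{Pa1Qbn} (the current pair remains incomplete and lies in $S_0$, since it still contains $(P_0;Q_0)$ and $S_0$ satisfies the Subset Axiom), while the remaining connections adjacent to $1$ or $n$ are restored using Lemma \ref{walk}; the order of re-insertion must be chosen — innermost connections first, the $1$/$n$-adjacent ones last — so that every intermediate object is a genuine circular pair and the relevant lemma applies.

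The heart of the matter is step (a), which rests on a \textbf{domination claim}. Write $(P_0;Q_0)=(a_1,\dots,a_k;b_1,\dots,b_k)$, so that Lemma \ref{crossing} supplies a crossing chain $1=a_0,a_1,\dots,a_k$ and $n=b_0,b_1,\dots,b_k$ with $(a_i;b_{i+1}),(a_{i+1};b_i)\in S_0$ for all $i$. I claim the algorithm produces a primary pair $(P';Q')=(c_1,c_2,\dots;d_1,d_2,\dots)$ — obtained by following, at each branching of $\mc{P}$, the child whose new connection is leftmost — satisfying $c_i\le a_i$ and $d_i\ge b_i$ for as long as $(a_i;b_i)$ is defined. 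This goes by induction on $i$: $(a_i;b_i)$, together with $(A_{2,i-1},a_i;B_{2,i-1},b_i)\in S_0$ (from the Subset Axiom applied to $(P_0;Q_0)$, or from Lemma \ref{crossing} and Lemma \ref{walk}), is a valid candidate at step $i$ of the algorithm, so the greedy first choice cannot overshoot it; and the constraint $c\le d$ enforced in the algorithm (justified there via Axiom \ref{axside}) prevents the $c$-chain and $d$-chain from crossing past one another, preserving componentwise domination. Granting domination, we transform $(P';Q')$ into $(P_0+1;Q_0+n)$ by sliding its $i$-th connection onto $(a_i;b_i)$ for $i=k,k-1,\dots,1$: Lemma \ref{trip} supplies the seven local crossing memberships $(a;d),(a;e),\dots\in S_0$ demanded at each step, Lemmas \ref{rtol} and \ref{ltol} carry out the slide, and domination is precisely what guarantees that each intermediate object is a circular pair.

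The main obstacle is the domination claim of the previous paragraph. The algorithm optimizes greedily on both the $c$-side and the $d$-side and may attach two children to a node of $\mc{P}$, so one must verify that at least one branch of $\mc{P}$ dominates the given crossing chain and that the stopping conditions (``if $a$ does not exist, stop'') and the ordering constraint $c\le d$ never prune away the dominating branch — in particular, that the branch being followed does not terminate before index $k$, which is where one uses that $(P_0;Q_0)$ itself witnesses the existence of the required candidate and extending element at each step. Once this extremality of the primary pairs is in hand, the reduction in step (b) and the sliding in step (a) are routine applications of the lemmas already established.
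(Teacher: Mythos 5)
Your overall architecture matches the paper's: reduce to a minimal incomplete pair (the paper does this in one line via Lemma \ref{Pa1Qbn}; your reduction through Lemma \ref{min} and repeated use of Lemma \ref{Pa1Qbn} is fine), then use the primary pairs together with the sliding machinery of Lemmas \ref{trip}, \ref{rtol}, \ref{ltol}. However, the crux of your argument is the \emph{global} domination claim --- that a single greedy branch of $\mc{P}$ (``always take the child whose new connection is leftmost'') satisfies $c_i\le a_i$ and $d_i\ge b_i$ for \emph{all} $i\le k$ and does not terminate before index $k$ --- and you explicitly leave this unproved, calling it the main obstacle. That is a genuine gap, and not a small one: to show the chosen branch can take another step you must exhibit a valid candidate for the algorithm at the pair $(C_{1,i};D_{1,i})$, which requires memberships such as $(C_{2,i},a_{i+1};D_{2,i},b_{i+1})\in S_{0}$ and $(c_i;b_{i+1}),(a_{i+1};d_i)\in S_{0}$; these do \emph{not} follow from Lemma \ref{crossing} applied to $(P_0;Q_0)$ (which only gives crossings along $(P_0;Q_0)$'s own chain), so proving the claim would itself require the same kind of axiom manipulations as the lemma. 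It is also not clear that committing to one fixed branch is legitimate: the construction of $\mc{P}$ produces up to two children at each node, chosen by different extremality criteria, and the child that dominates at a given position can a priori depend on the incomplete pair in question.

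The paper's proof is organized precisely to avoid your global claim. It performs a retrograde induction on the number $i$ of connections that $(P;Q)$ shares with a primary pair, and only needs the \emph{one-step} domination fact: among the at most two primary pairs agreeing with $(P;Q)$ in the first $i$ connections, one has $c_{i+1}\le a_{i+1}$ and $d_{i+1}\ge b_{i+1}$, which follows directly from the greedy construction of $\mc{P}$ together with Lemma \ref{crossing} (``otherwise the algorithm could have chosen $b_{i+1}$ or $a_{i+1}$''). It then replaces the single connection $(a_{i+1};b_{i+1})$ by $(c_{i+1};d_{i+1})$, checks via Lemmas \ref{trip} and \ref{rtol} that the modified pair lies in $S_{0}$ and has the same associated primary pair, invokes the inductive hypothesis, and returns to $(P+1;Q+n)$ with Lemma \ref{ltol}; the base cases are Lemma \ref{Pa1Qbn} and the Subset Axiom. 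If you want to salvage your write-up, the cleanest fix is to abandon the full-chain domination and switch to this shared-prefix induction, where the only extremality input is the local one supplied for free by the algorithm.
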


\begin{proof}
By Lemma \ref{Pa1Qbn}, we may assume that $(P;Q)$ is a minimal incomplete circular pair. Let $(P+1;Q+n)=(1,a_1, \ldots , a_k; n,b_1, \ldots , b_k)=(1,A_{1,k};n,B_{1,k})$ (see Definition \ref{betternotation}). Consider the primary circular pairs whose first $i$ connections are the same as those of $(P;Q)$. By the construction of $\mc{P}$, there are at most two such primary circular pairs, which we denote by 
\begin{align*}
(P;Q)_{1}&=(A_{1,i},C_{i+1,m}; B_{1,i}, D_{i+1,m})\\
(P;Q)_{2}&=(A_{1,i}, E_{i+1,m'}; b_{1,i}, f_{i+1,m'}).
\end{align*}
By Lemma \ref{crossing} and the construction of $\mc{P}$, we have that $c_{i+1}\le a_{i+1}$ and $d_{i+1}\ge b_{i+1}$ or $e_{i+1}\le a_{i+1}$ and $f_{i+1} \ge b_{i+1}$ (or else we would have been able to set $d_{i+1}=b_{i+1}$ or $e_{i+1}=a_{i+1}$). Furthermore, exactly one of these pairs of inequalities holds. Let us assume that the former holds, as the latter case is identical, and in this case, call $(P;Q)_{1}$ the primary circular pair associated to $(P;Q)$. If, on the other hand, $(P;Q)_{1}$ (as above) is the only primary circular pair sharing its $i$ connections with $(P;Q)$, then $c_{i+1}\le a_{i+1}$ and $d_{i+1}\ge b_{i+1}$, and we still refer to $(P;Q)_{1}$ as the primary circular pair associated to $(P;Q)$.

We now prove the lemma by retrograde induction on $i$, where here $i$ is such that the first $i$ connections of $(P;Q)$ are shared with some primary circular pair. If $i=k$, we are done by Lemma \ref{Pa1Qbn}, and if the first $i$ connections of $(P;Q)$ are exactly the primary circular pair in question, we are done by the Subset Axiom. Otherwise, we first need $(A;B)=(A_{1,i}, c_{i+1},A_{i+2,k}; B_{1,i},d_{i+1},B_{i+2,k})\in S_{0}$, which follows from Lemmas \ref{trip} and \ref{rtol}, where the conditions of these lemmas are satisfied as a result of Lemma \ref{crossing}. It is easy to see that the primary circular pair associated to $(A;B)$ is the same as that for $(P;Q)$. It follows, then, by the inductive hypothesis, that $(1,A_{1,i},c_{i+1},A_{i+2,k}; B_{1,i},d_{i+1},B_{i+2,k},n)\in Z$. When $a_{i+1}\neq c_{i+1}$ and $b_{i+1}\neq d_{i+1}$, Lemma \ref{ltol} yields the desired result, and if one of $a_{i+1}=c_{i+1}$ or $b_{i+1}=d_{i+1}$, we are done by a similar argument.
\end{proof}

\begin{lemma}\label{unPCP}
There is exactly one circular pair in $\mc{P}$ that does not lie in $S_{0}$, which we call the \textbf{$S_{0}$-primary circular pair}.
\end{lemma}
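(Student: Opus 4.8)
The plan is to track, during the algorithmic construction of $\mc{P}$, which circular pairs lie in $S_0$ and which do not, maintaining the invariant that at every stage exactly one pair of $\mc{P}$ fails to lie in $S_0$. We start with $\mc{P} = \{(1;n)\}$; since $S_0$ does not have the $(n,1)$-BEP, and $\epair \in S_0$ by Axiom \ref{axempty}, the pair $(1;n)$ is precisely the obstruction, i.e.\ $(1;n) \notin S_0$, so the invariant holds at initialization. The key point is that the algorithm only ever operates on pairs $(P;Q) \in \mc{P}$ that \emph{already} lie in $S_0$ (this is the stated precondition ``if we also have $(P;Q) \in S_0$''), and when it does so, it removes $(P;Q)$ from $\mc{P}$ and adds one or two strictly longer pairs. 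So the unique non-$S_0$ pair in $\mc{P}$ is never removed by the algorithm; we only need to show that the newly added pairs, with exactly one exception preserved across the run, lie in $S_0$.

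Concretely, I would argue as follows. Suppose $(P;Q) = (A_{1,i-1};B_{1,i-1}) \in \mc{P} \cap S_0$ and the operation adds $(A_{1,i-1},a;B_{1,i-1},c)$ (and possibly $(A_{1,i-1},d;B_{1,i-1},b)$). By the way $c$ and the truncation $(A_{2,i-1},a;B_{2,i-1},c)$ are chosen, together with Lemma \ref{crossing} and the ``walking'' Lemma \ref{walk} (which lets us swap the tail connection of a pair in $S_0$ for a crossing connection still keeping membership in $S_0$), one shows $(A_{1,i-1},a;B_{1,i-1},c) \in S_0$ \emph{provided} the whole pair does not become the incomplete obstruction — and the only way it could fail to be in $S_0$ is exactly the situation governed by Lemmas \ref{Pa1Qbn} and \ref{PCPcont}, namely that adding $1$ to the front and $n$ to the back would force a pair into $S_1 \setminus S_0$. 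Since $S_0$ does not have the $(n,1)$-BEP but $S_1$ does, there is at least one such obstruction, so at least one added pair fails to lie in $S_0$; I would then show there is at most one, because a second would, via the Grassmann–Pl\"ucker axioms (\ref{axside}, \ref{axcross}), contradict the minimality/maximality setup for $S_0$ or force $(1;n)$-type membership that is already excluded. The same analysis applies to the second added pair $(A_{1,i-1},d;B_{1,i-1},b)$.

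Thus each application of the operation removes one pair of $\mc{P} \cap S_0$ and adds pairs all of which lie in $S_0$ except possibly transferring the single non-$S_0$ status to (at most) one new longer pair — and since an obstruction provably persists (the $(n,1)$-BEP failure of $S_0$ propagates), exactly one remains. Since the algorithm terminates, the invariant holds at termination, giving the lemma; this terminal pair is the $S_0$-primary circular pair.

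The main obstacle I anticipate is the second step: showing that \emph{at most one} added pair escapes $S_0$, i.e.\ that the non-membership cannot ``branch'' when the operation produces two pairs $(A_{1,i-1},a;B_{1,i-1},c)$ and $(A_{1,i-1},d;B_{1,i-1},b)$. This requires carefully using that $c \le d$ (noted in the construction via Axiom \ref{axside}) to show the two tails are ``nested'' rather than ``crossing'' in a way that lets one deduce, from the hypothetical non-membership of one, the membership of the other — essentially an application of Axiom \ref{axside} or \ref{axcross} to the four indices $a,d$ (rows) and $c,b$ (columns) in the context of $(A_{1,i-1};B_{1,i-1})$. Verifying that the relevant smaller connections $(a_j;b_{j+1})$, $(a_{j+1};b_j)$ needed to invoke these axioms are indeed in $S_0$ — which should follow from Lemma \ref{crossing} applied to the primary pairs constructed so far — is the bookkeeping-heavy part.
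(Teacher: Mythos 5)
There is a genuine gap, and it starts at the very first step. The failure of the $(n,1)$-BEP does \emph{not} say that $(1;n)\notin S_{0}$; it only says that \emph{some} pair $(P;Q)\in S_{0}$ has $(P+1;Q+n)$ a circular pair lying outside $S_{0}$, and that pair need not be $\epair$. In the typical situation $(1;n)\in S_{0}$, so your invariant ``exactly one element of $\mc{P}$ lies outside $S_{0}$'' is already false at initialization (the count is zero), and it cannot be repaired by a purely local analysis of the algorithm: since the operation is only applied to pairs of $\mc{P}$ that lie in $S_{0}$, a non-$S_{0}$ pair can first appear at any depth, and different branches of the tree of operations could each produce one. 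Your anticipated ``no branching within a single operation'' argument does not exclude two non-$S_{0}$ pairs arising from operations applied to two \emph{different} members of $\mc{P}\cap S_{0}$, which is exactly the case the lemma must rule out. Moreover the per-operation claim itself (``the added pair lies in $S_{0}$ unless it is the unique obstruction'') is not established by Lemmas \ref{crossing} and \ref{walk}; as written it assumes the dichotomy it is supposed to prove.

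The paper's proof is global rather than invariant-based, and both halves are short. Existence: take any incomplete pair (one exists because the $(n,1)$-BEP fails); Lemma \ref{PCPcont} produces a primary pair $(P';Q')$ such that any electrical positroid $Z$ with $S_{0}\cup\{(P';Q')\}\subset Z\subset S_{1}$ contains the missing completion, so if $(P';Q')$ were in $S_{0}$ one could take $Z=S_{0}$ and contradict incompleteness; hence $\mc{P}\setminus S_{0}\neq\emptyset$. Uniqueness: two non-$S_{0}$ primary pairs share a longest common prefix $(A_{1,i-1};B_{1,i-1})$ and have the forms $(A_{1,i-1},c,P;B_{1,i-1},d,Q)$ and $(A_{1,i-1},e,P';B_{1,i-1},f,Q')$ with their full tails; applying Axiom \ref{axside} to each (using that the two truncations supplied by the algorithm lie in $S_{0}$ and the full pair does not), then the Subset Axiom, and then Axiom \ref{axcross}, yields $(A_{2,i-1},c;B_{2,i-1},f)\in S_{0}$ with $f>d$, contradicting the greedy choice of $d$ in the construction of $\mc{P}$. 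Note that this contradiction is extracted at the branch point but requires carrying the arbitrary tails $P,Q,P',Q'$ through Axiom \ref{axside}; your sketch, which applies the axioms only to the four new indices of a single operation, does not reach this configuration. If you want to salvage your plan, you essentially must prove this branch-point statement anyway, at which point the induction over the algorithm becomes superfluous.
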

\begin{proof}
By Lemma \ref{PCPcont}, $\mc{P}\setminus S_{0}$ has at least one element, because $S_{0}$ does not have the $(n,1)$-BEP. Assume, for sake of contradiction, that $\mc{P}\setminus S_{0}$ has two elements, of the form 
\begin{align*}
&(A_{1,i-1}, c,P; B_{1,i-1}, d, Q)\\
&(A_{1,i-1}, e,P'; B_{1,i-1}, f, Q').
\end{align*}
Because $(A_{1,i-1}, c,P; B_{1,i-1}, d, Q)\notin S_{0}$ and $(A_{1,i-1},P; B_{1,i-1}, Q),(A_{2,i-1}, c,P; B_{2,i-1}, d, Q)\in S_{0}$, we must have $(A_{2,i-1}, c,P; B_{1,i-1}, Q)\in S_{0}$, by Axiom \ref{axside}. Thus, $(A_{2,i-1}, c; B_{1,i-1})\in S_{0}$, by the Subset Axiom. By the same argument applied to $e,f$, we must have $(A_{1,i-1}; B_{2,i-1},f)\in S_{0}$, so Axiom \ref{axcross} gives $(A_{2,i-1}, c; B_{2,i-1},f)\in S_{0}$. However, because $f>d$, we have a contradiction of the definition of $d$. Thus, $|\mc{P}\setminus S_{0}|=1$.
\end{proof}

\begin{lemma}\label{contPCP}
For any incomplete circular pair $(P;Q)$, any electrical positroid $Z$ satisfying $S_{0}\cup\{(P+1;Q+n)\}\subset Z\subset S_{1}$ contains the $S_{0}$-primary circular pair.
\end{lemma}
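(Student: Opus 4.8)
The plan is to run the propagation argument of Lemma~\ref{PCPcont} in the opposite direction. That lemma produces, for the incomplete pair $(P;Q)$, a circular pair $(P';Q')\in\mc{P}$ such that every electrical positroid $Z$ with $S_{0}\cup\{(P';Q')\}\subseteq Z\subseteq S_{1}$ contains $(P+1;Q+n)$; since $(P;Q)$ is incomplete we have $(P+1;Q+n)\notin S_{0}$, so $(P';Q')$ cannot lie in $S_{0}$ (otherwise $Z=S_{0}$ would contradict the conclusion of Lemma~\ref{PCPcont}), and hence $(P';Q')=(P^{*};Q^{*})$ by Lemma~\ref{unPCP}. It therefore remains to prove the converse statement: that $(P+1;Q+n)\in Z$ forces $(P^{*};Q^{*})\in Z$.

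First I would reduce to the case that $(P;Q)=(a_{1},\ldots,a_{k};b_{1},\ldots,b_{k})$ is minimal incomplete. If $(P;Q)$ is incomplete but not minimal, then by definition $(P-a_{k};Q-b_{k})$ is not complete; as it lies in $S_{0}$ by the Subset Axiom and avoids $\{1,n\}$, it is therefore incomplete, and its completion $\bigl((P-a_{k})+1;(Q-b_{k})+n\bigr)=(P+1;Q+n)\setminus\{a_{k},b_{k}\}$ lies in $Z$, again by the Subset Axiom. Iterating, I may assume $(P;Q)$ is minimal incomplete; by the first paragraph its associated primary pair (in the sense of the proof of Lemma~\ref{PCPcont}) is still $(P^{*};Q^{*})$.

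For the main step I would mirror the inductive argument in the proof of Lemma~\ref{PCPcont}, running it forward rather than in retrograde. There, one transports the membership of $(P^{*};Q^{*})$ in $Z$ ``downward'' to $(P+1;Q+n)$ by an induction on the number of connections the two pairs share, each step replacing one connection of $(P^{*};Q^{*})$ by the corresponding connection of $(P;Q)$ via Lemma~\ref{ltol}, with the auxiliary circular pairs required to apply it placed in $S_{0}$ by Lemmas~\ref{trip} and \ref{rtol} (whose hypotheses are supplied by Lemma~\ref{crossing}). Here I would run the same induction in the opposite direction: beginning with $(P+1;Q+n)\in Z$, replace its connections by those of $(P^{*};Q^{*})$ one at a time, using the \emph{same} auxiliary pairs --- which again lie in $S_{0}\subseteq Z$ --- and Lemma~\ref{ltol} (or Lemma~\ref{walk}, in the degenerate cases where a connection of $(P;Q)$ shares an endpoint with the corresponding connection of $(P^{*};Q^{*})$) to pass from one stage to the next. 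Once all connections of $(P^{*};Q^{*})$ have been installed, $(P^{*};Q^{*})$ itself follows from the resulting element of $Z$ by the Subset Axiom, or by Lemma~\ref{Pa1Qbn}, according to the relative lengths of the two pairs.

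The step I expect to be the main obstacle is the inductive bookkeeping. One must make precise the notion of ``number of shared connections,'' check that the intermediate pairs produced at each stage are genuine circular pairs which retain $(P^{*};Q^{*})$ as their associated primary pair --- so that Lemma~\ref{unPCP} forces the induction to terminate exactly at $(P^{*};Q^{*})$ and not at some other element of $\mc{P}$ --- and verify that the hypotheses of Lemmas~\ref{rtol} and \ref{ltol}, namely seven one-element circular pairs together with two larger ones, are genuinely available at every stage. This is the same delicate analysis that supports Lemma~\ref{PCPcont}, and it is also the reason the reduction to minimal incomplete pairs cannot be omitted, since Lemma~\ref{crossing} is available only there.
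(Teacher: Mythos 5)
Your overall skeleton matches the paper's: identify the pair produced by Lemma \ref{PCPcont} with the $S_{0}$-primary pair via Lemma \ref{unPCP}, reduce to a minimal incomplete pair by the Subset Axiom, and then induct on the number of connections shared with the primary pair, replacing connections of $(P+1;Q+n)$ one at a time by those of $(P^{*};Q^{*})$ until the Subset Axiom finishes. But the engine you propose for each step does not work, and the failure is exactly where the real content of the lemma lies. Lemma \ref{ltol} transfers a completion \emph{away} from the primary pair's connection: from the completed pair carrying the connection $(c_{i+1},d_{i+1})$ (the one closer to $\{1,n\}$) it produces the completed pair carrying $(a_{i+1},b_{i+1})$; that is precisely the direction used in Lemma \ref{PCPcont}, and it cannot be reversed by reusing ``the same auxiliary pairs.'' The step you need here goes the other way, and the paper uses Lemma \ref{rtol} for it. Lemma \ref{rtol}, however, has a hypothesis your plan does not account for: the completed prefix of the primary pair through the connection being installed must already lie in $Z$. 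For a \emph{proper} prefix this is fine (proper prefixes of the $S_{0}$-primary pair were extended during the construction of $\mc{P}$, hence lie in $S_{0}\subset Z$), which is why the paper can say the case $i+1<m$ ``is exactly Lemma \ref{rtol}.''

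At the last step, $i+1=m$, that needed pair is (the completion of) the $S_{0}$-primary pair itself, which by definition is \emph{not} in $S_{0}$ --- it is the very thing the lemma is trying to put into $Z$. So neither \ref{ltol}, nor \ref{rtol}, nor \ref{walk} with auxiliaries from $S_{0}$ can close this step, and your ``main obstacle'' paragraph flags the hypothesis-checking without supplying the missing idea. The paper closes this case by a genuinely different mechanism: it uses the incompleteness of $(P;Q)$, i.e.\ $(P+1;Q+n)\notin S_{0}$, together with Axiom \ref{axleft} to show that certain intermediate pairs are \emph{not} in $S_{0}$, and then two applications of the disjunctive Axiom \ref{axmid} are forced into the branch that places the desired pair in $Z$. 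Some argument of this kind --- exploiting non-membership in $S_{0}$ to resolve the disjunctions in the axioms --- is indispensable, and without it your induction terminates one step short of the $S_{0}$-primary pair.
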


\begin{proof}
Proceed by retrograde induction on $i$, where $i$ is such that the first $i$ connections of $(P;Q)$ are the same as those of some primary circular pair. By the Subset Axiom, we can assume that $(P;Q)$ is minimal. The base case is immediate from the Subset Axiom, so suppose that $i<k$. Let $(P;Q)=(A_{1,k};B_{1,k}).$ Then, we need to show that, if $(P+1;Q+n)\in Z$, then $(1,A_{1,i}, c_{i+1}, A_{i+2,k};n, B_{1,k})\in Z$.

First, suppose that both $c_{i+1}<a_{i+1}$ and $b_{i+1}< d_{i+1}$. Then, the desired claim is exactly Lemma \ref{rtol}, as long as $i+1<m$. Assume, then, that $i+1=m$. First, an application of Lemma \ref{rtol} yields $(A_{1,i}, c_{i+1},A_{i+2,k};B_{1,i}, d_{i+1},B_{i+2,k})\in S_{0}$, which implies $(A_{1,i}, c_{i+1}, A_{i+2,k},B_{1,k})\in S_{0}$ and $(A_{1,k};B_{1,i}, d_{i+1}, B_{i+2,k})\in S_{0}$. Furthermore, if $(A_{i,i}, c_{i+1}, A_{i+1,k}; n, B_{1,k})\in S_{0}$, then Axiom \ref{axleft} yields $(1,A_{1,k}; n, B_{1,k})\in S_{0}$, a contradiction. 

Similarly, we have $(1, A_{1,i}, c_{i+1}, A_{i+2,k}; B_{1,i}, d_{i+1}, B_{i+1,k})\notin S_{0}$. As a result, applying Axiom \ref{axmid} to $(1, A_{1,i}, c_{i+1}, A_{i+1,k}; n, B_{1,k})$ with $1,c_{i+1},a_{i+1},n$ gives $(1, A_{1,i}, c_{i+1}, A_{i+2,k}; n, B_{1,k})\in S_{0}$. One more application of Axiom \ref{axmid} to  $(1, A_{1,i}, c_{i+1}, A_{i+2,k}; n, B_{1,i}, d_{i+1}, B_{i+1},k)$ with $n,d_{i+1}, b_{i+1}, 1$ yields the desired result.

If one of the indices $a_{i+1}=c_{i+1}$ or $b_{i+1}=d_{i+1}$, then we are also done by a similar argument.
\end{proof}

\begin{corollary}\label{uniclos}For any two incomplete circular pairs $(P;Q)$ and $(P';Q')$, any electrical positroid $Z$ satisfying $S\cup\{(P+1;Q+n)\}\subset Z\subset S_{1}$ must also contain $(P'+1;Q'+n)$.
\end{corollary}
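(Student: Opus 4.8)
\textbf{Proof proposal for Corollary \ref{uniclos}.}

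The plan is to reduce both statements ``$(P+1;Q+n)\in Z$'' and ``$(P'+1;Q'+n)\in Z$'' to a common, canonical element of $Z$, namely the $S_{0}$-primary circular pair, and then run the implications in both directions. Concretely, I would argue as follows. By Lemma \ref{contPCP} applied to the incomplete pair $(P;Q)$, the hypothesis $S_{0}\cup\{(P+1;Q+n)\}\subset Z\subset S_{1}$ forces $Z$ to contain the $S_{0}$-primary circular pair, whose existence and uniqueness are guaranteed by Lemma \ref{unPCP}. This is the ``upward'' half of the argument: adding any single incomplete pair (in its BEP-completed form) to $S_{0}$ and closing up to an electrical positroid inside $S_{1}$ always drags along this one distinguished pair.

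For the ``downward'' half, I would show that once $Z$ contains the $S_{0}$-primary circular pair together with all of $S_{0}$, it must contain $(P'+1;Q'+n)$ for \emph{every} incomplete $(P';Q')$. This is exactly the content of Lemma \ref{PCPcont}: that lemma produces, for each incomplete $(P';Q')$, a primary circular pair $(P'';Q'')\in\mc{P}$ such that any electrical positroid $Z$ with $S_{0}\cup\{(P'';Q'')\}\subset Z\subset S_{1}$ contains $(P'+1;Q'+n)$. By Lemma \ref{unPCP} the only element of $\mc{P}$ not already in $S_{0}$ is the $S_{0}$-primary circular pair, so either $(P'';Q'')\in S_{0}\subset Z$ already, or $(P'';Q'')$ \emph{is} the $S_{0}$-primary pair, which we have just shown lies in $Z$. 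In either case the hypothesis of Lemma \ref{PCPcont} is met, and we conclude $(P'+1;Q'+n)\in Z$. Chaining the two halves: $S_{0}\cup\{(P+1;Q+n)\}\subset Z \implies$ ($S_{0}$-primary pair) $\in Z \implies (P'+1;Q'+n)\in Z$, which is the claim. (One should also note the case $Z = S_1$ is consistent, and that when $(P;Q)$ or $(P';Q')$ happens to already be complete the statement is vacuous or trivial, but the interesting content is the incomplete case.)

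I expect the only real subtlety — and it is one that has already been absorbed into the preceding lemmas rather than this corollary — to be the bookkeeping in identifying $(P'';Q'')$ with the $S_{0}$-primary pair: Lemma \ref{PCPcont} is stated with ``some $(P';Q')\in\mc{P}$'' and one must invoke Lemma \ref{unPCP} to pin it down, using that $\mc{P}\setminus S_{0}$ is a singleton. Given those lemmas, the corollary itself is essentially a two-line logical composition, so the write-up should be short: state the reduction via Lemma \ref{contPCP}, apply Lemma \ref{unPCP} to see that the $S_{0}$-primary pair is the unique candidate, then close via Lemma \ref{PCPcont}. No further computation is needed.
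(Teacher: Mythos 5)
Your proposal is correct and follows essentially the same route as the paper: apply Lemma \ref{contPCP} to force the $S_{0}$-primary circular pair into $Z$, then conclude via Lemma \ref{PCPcont}. Your extra step of invoking Lemma \ref{unPCP} to identify the pair produced by Lemma \ref{PCPcont} (either it already lies in $S_{0}$ or it is the $S_{0}$-primary pair) is a harmless and slightly more careful elaboration of the same argument.
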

\begin{proof}
By Lemma \ref{contPCP}, $Z$ must contain the $S_{0}$-primary circular pair. The claim then follows by Lemma \ref{PCPcont}.
\end{proof}

By the above results, if we start with our set $S_{0}$ and some incomplete circular pair $(P;Q)\in S_{0}$, ``completing'' $(P;Q)$ by adding $(P+1;Q+n)$ to $S_{0}$ will require that we have completed every incomplete pair. We now finish the proof of Theorem \ref{circposthm}, in the boundary edge case.

Let $T_{0}\subset S_{0}$ denote the subset of circular pairs in $S_{0}$ without the connection $(1,n)$, and define $T_{1},T'_{0}$ similarly for $S_{1},S'_{1}$, respectively. By construction, it is easily seen that $T_{0}=T_{1}=T'_{0}$. While $T_{0}$ may not necessarily be an electrical positroid, we have:

\begin{lemma}\label{findclos}
There exists an electrical positroid $T$ with $T_{0}\subset T\subset S_{0}\cap S'_{0}$.
\end{lemma}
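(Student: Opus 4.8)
The goal is to produce an electrical positroid $T$ sandwiched between $T_0$ and $S_0\cap S_0'$. Recall that $T_0$ is the set of circular pairs in $S_0$ that do not use the connection $(1,n)$; by construction $T_0=T_1=T_0'$. The natural candidate for $T$ is the set $\pi(G_0)$, where $G_0$ is obtained from $G_1$ by deleting the boundary edge $(n,1)$ — so one approach is to directly verify $T_0\subseteq\pi(G_0)\subseteq S_0\cap S_0'$. However, at this point in the argument we do not yet know $\pi(G_0)=S_0$ (that is the final contradiction we are building toward), so it is cleaner to construct $T$ intrinsically as a ``closure'' operation on $T_0$ inside the lattice of electrical positroids, and then check it lands below $S_0\cap S_0'$.

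First I would observe that the set of electrical positroids contained in a fixed electrical positroid is closed under a suitable ``intersection-like'' operation, or at least that there is a well-defined smallest electrical positroid containing $T_0$. Concretely, I would let $T$ be the intersection of all electrical positroids $Z$ with $T_0\subseteq Z\subseteq S_0$; the key point is that the axioms in Definition \ref{posaxioms} are all of the Horn form ``if these pairs are in $S$ then (one of) these pairs is in $S$'' except for Axiom \ref{axmid} and Axiom \ref{axside}, which are disjunctive. For the Horn axioms the intersection of electrical positroids is again an electrical positroid, so the only issue is the two disjunctive axioms. Here is where the machinery of the preceding lemmas does the work: Corollary \ref{uniclos} says exactly that, within the interval $[S_0, S_1]$, ``completing'' any one incomplete pair forces completing all of them, and more generally the analysis via primary circular pairs constrains how the disjunctions can be resolved. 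I would use this to show that when we take the intersection $T$, any instance of Axiom \ref{axside} or Axiom \ref{axmid} whose hypothesis holds in $T$ but fails the ``easy'' disjunct in $T$ would force, in every $Z\supseteq T_0$ inside $S_0$, the ``hard'' disjunct — hence it holds in $T$ — so $T$ is genuinely an electrical positroid.

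Next I would check $T\subseteq S_0\cap S_0'$. We have $T\subseteq S_0$ by construction, so it remains to see $T\subseteq S_0'$. Since $T$ is the smallest electrical positroid containing $T_0$, and $T_0=T_0'\subseteq S_0'$ with $S_0'$ an electrical positroid, minimality gives $T\subseteq S_0'$ provided we also know $T\subseteq S_0$ — but wait, minimality is with respect to positroids inside $S_0$, so strictly I should instead take $T$ to be the smallest electrical positroid containing $T_0$ (with no upper constraint) and then argue $T\subseteq S_0$ and $T\subseteq S_0'$ separately, each by minimality against $S_0$ and $S_0'$ respectively (both contain $T_0=T_0'$). The inclusion $T_0\subseteq T$ is immediate. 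So the real content is just: the smallest electrical positroid containing $T_0$ exists, i.e. the intersection of all electrical positroids containing $T_0$ is again an electrical positroid.

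**Main obstacle.** The crux is handling the two disjunctive axioms (\ref{axside}) and (\ref{axmid}) under intersection — a naive intersection of models of a disjunctive theory need not be a model. The resolution is to show these disjunctions are, in our situation, not genuinely disjunctive: either the ``positive'' branch (Axiom \ref{axcross}/\ref{axleft}, which is Horn) already forces the conclusion, or the failure of one branch propagates. I expect the proof of Lemma \ref{findclos} in the paper to sidestep this by instead taking $T=\pi(G_0)$ and using Theorem \ref{equivalentconnections} together with a direct comparison of connections after deleting the boundary edge; that is in fact the simpler route, and if the intrinsic closure argument proves delicate I would fall back to it: delete the boundary edge $(n,1)$ from $G_1$ to get $G_0$, note every connection in $G_0$ is a connection in $G_1$ not using that edge (so $\pi(G_0)\subseteq T_1=T_0\subseteq S_0$ and likewise $\subseteq S_0'$ since $T_0=T_0'$), and note $T_0\subseteq\pi(G_0)$ because a connection avoiding the edge $(n,1)$ survives its deletion. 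Then $T=\pi(G_0)$ is an electrical positroid by Theorem \ref{circposthm} applied in the already-established direction (positroids realized by graphs), giving $T_0\subseteq T\subseteq S_0\cap S_0'$ as required.
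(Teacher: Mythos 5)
Both your main route and your fallback have genuine gaps, and the gap in each case is precisely the content of the lemma. For the main route, everything hinges on the deferred claim that the intersection of all electrical positroids containing $T_0$ is again an electrical positroid (equivalently, that a smallest such positroid exists); because of the disjunctive Axioms \ref{axside} and \ref{axmid} this is exactly the hard part, and the machinery you invoke to resolve it does not apply: Lemma \ref{Pa1Qbn}, Lemma \ref{contPCP} and Corollary \ref{uniclos} all concern electrical positroids $Z$ squeezed between $S_0\cup\{(P+1;Q+n)\}$ and $S_1$, i.e.\ sets lying \emph{above} $S_0$, and say nothing about how the disjunctions get resolved in sets between $T_0$ and $S_0$ (or $S'_0$). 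The paper does not construct a canonical or least closure at all. It runs an explicit algorithm starting from $T=T_0$: for each violated instance of one of the first six axioms it adds the forced pairs (together with all pairs formed by subsets of their connections, to preserve the Subset Axiom), and for the two disjunctive axioms it uses the specific structure of $S_0\setminus T_0$ and $S'_0\setminus T'_0$ — every such pair contains the connection $(1,n)$, so $1\in P$ and $n\in Q$ — plus casework on whether $a,c$ (resp.\ $a,d$) equal $1$ or $n$, applying Axioms \ref{axcross}, \ref{axsep}, \ref{axleft}, \ref{axright} \emph{inside the ambient positroids} $S_0$ and $S'_0$ to show that one of the two disjuncts lies in $S_0\cap S'_0$ and may be added. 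Which disjunct gets added genuinely depends on $S_0$ and $S'_0$, which is why nothing like a smallest closure is claimed; your argument would need to supply this entire analysis.

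The fallback fails for a concrete reason. With $T=\pi(G_0)$, your inclusion $\pi(G_0)\subseteq T_1=T_0$ is false: deleting the boundary edge $(n,1)$ only destroys path collections in which the pair $(1,n)$ is realized by that edge itself, and a circular pair whose matching contains $(1,n)$ can still be realized in $G_0$ by a path from $1$ to $n$ through the interior. (If $\pi(G_0)\subseteq T_0$ held, then together with the correct inclusion $T_0\subseteq\pi(G_0)$ we would get $T_0=\pi(G_0)$, making $T_0$ itself an electrical positroid — but the paper explicitly notes that $T_0$ need not be one.) Moreover $\pi(G_0)$ is exactly the set $S'_0$ appearing in the statement, so the inclusion $\pi(G_0)\subseteq S_0$ you would need is half of the equality $\pi(G_0)=S_0$ that the whole argument is driving toward via this lemma and Corollary \ref{uniclos}; assuming it here would be circular.
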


\begin{proof}
We give an algorithm to construct such an electrical positroid $T$. We begin by setting $T=T_{0}$; note that $T$ satisfies the last two electrical positroid axioms, but may not satisfy the first six. Each of the first six axioms are of the form $\mc{A},\mc{B}\in T\Rightarrow \mc{C},\mc{D}\in T$, or otherwise $\mc{A},\mc{B}\in T\Rightarrow \mc{C},\mc{D}\in T$ or $\mc{E},\mc{F}\in T$. At each step of the algorithm, if $T$ is an electrical positroid, we stop, and if not, we pick an electrical positroid axiom $\alpha$ (among the first six) not satisfied by $\mc{A},\mc{B}\in T$. We then show that we can add elements of $S_{0}\cap S'_{0}$ to $T$ so that $\alpha$ is satisfied by $\mc{A},\mc{B}$, and so that $T$ also still satisfies the Subset Axiom.

It is clear that adding circular pairs to $T$ in this way is possible when $\alpha$ is one of Axioms \ref{axcross}, \ref{axsep}, \ref{axleft}, and \ref{axright}: we take the add circular pairs $\mc{C},\mc{D}$, as above, as well as all of the circular pairs formed by subsets of their respective connections. We will show that this operation is also possible when $\alpha$ is one of Axioms \ref{axside} and \ref{axmid}. From here, it will be clear that the algorithm must terminate, because we can only add finitely many elements to $T$. Therefore, we will eventually find $T$ with the desired properties.

In each of the cases below, the circular pairs added to $T$ are always assumed to be added along with each of their subsets, that is, the circular pairs formed by subsets of their connections. In this way, the Subset Axiom is satisfied by $T$ at all steps in the algorithm.

We first consider Axiom \ref{axside}, which we assume to fail in $T$ when applied to $(P-a; Q-c),(P-b; Q-d)\in T$. If $(P-a; Q-c),(P-b; Q-d)\in S_{0}$, either $(P-a;Q-d),(P-b;Q-c)\in S_{0}$ or $(P-a-b;Q-c-d),(P;Q)\in S_{0}$. It is easy to see that $1\in P$ and $n \in Q$ (or vice versa, but we can swap $P$ and $Q$ and reverse their orders), or else Axiom \ref{axside} already would have been satisfied by $(P-a; Q-c),(P-b; Q-d)\in T$. We proceed by casework:

\begin{itemize}
\item $(a,c)=(1,n)$. Then, because Axiom \ref{axside} fails, we have $(P-b;Q-c)\notin T_{0},S_{0},S'_{0}$. Thus, we may add $(P-a-b;Q-c-d),(P;Q)$ to $T$, and these lie in $S\cap S''$.

\item $a=1, c\neq n$. We have $(P-a;Q-c),(P-b;Q-d)\in T$. First, suppose that $(P-a;Q-d)\notin T$. Because $(P-a;Q-d)$ does not contain the connection $(1,n)$, we have $(P-a;Q-d)\notin S_{0},S'_{0}$. Then, $(P-a-b;Q-c-d),(P;Q)\in S_{0}, S'_{0}$, so we may add may $(P;Q)$ to $T$, so that Axiom \ref{axside} is satisfied with $(P-a;Q-c),(P-b;Q-d)\in T$ (note that $(P-a-b;Q-c-d)$ is already in $T$).

Now, suppose instead that $(P-a;Q-d)\in T$. If $(P-b-1;Q-c-n)\notin T$, then $(P-b;Q-c)\notin S_{0}, S'_{0}$ by the Subset Axiom. Then, $(P-a-b;Q-c-d),(P;Q)\in S_{0}, S'_{0}$, and so we may add $(P;Q)$ to $T$ to satisfy Axiom \ref{axside}. Now, assume that $(P-b-1;Q-c-n)\in T$. For any electrical positroid $\overline{S}$, Axiom \ref{axleft} applied to $(P-b;Q)$ and $d,c,n,1$ gives that $(P-b;Q-d)\in \overline{S}$ and $(P-b-1;Q-c-n)\in \overline{S}$ implies $(P-b;Q-c)\in \overline{S}$ and $(P-b-1;Q-n-d)\in \overline{S}$. By the discussion above, we have $(P-b;Q-d),(P-b-1;Q-c-n)\in T,S_{0},S'_{0}$, and so we may add $(P-b;Q-c)$ to $T$. The case in which $a\neq 1, c=n$ is identical.

\item The case $a\neq 1, c\neq n$ may be handled using similar logic; the details are left to the reader.

\end{itemize}

Finally, consider Axiom \ref{axmid}, which we assume to fail for $(P-b;Q),(P-a-c;Q-d)\in T$. As before, we may assume $1\in P,n\in Q$.
\begin{itemize}
\item $(a,d)=(1,n)$, or $(d,a)=(n,1)$. Similar to the first case above.
\item
$a=1,d\neq n$. Then, we have $(P-b;Q),(P-a-c;Q-d)\in T,S_{0},S'_{0}$. As in the second case for Axiom \ref{axside}, we may assume that we have $(P-a;Q)\in T,S_{0},S'_{0}$ and $(P-a-b;Q-d)\in T,S_{0},S'_{0}$, or else both $S_{0}$ and $S'_{0}$ would contain exactly one of $(P-a;Q),(P-b-d;Q-d)$ and $(P-c;Q),(P-a-b;Q-d)$. Moreover, we may assume that we have $(P-1-b-c;Q-n-d)\in T,S_{0},S'_{0}$ by similar logic. Because $(P-b;Q)\in T,S_{0},S'_{0}$ and $(P-1-b-c;Q-n-d)\in T,S_{0},S'_{0}$, we may apply Axiom \ref{axsep} to find that $(P-b-c;Q-d)\in S_{0},S'_{0}$. Thus, we can add $(P-b-c;Q-d)$ to $T$, so that we still have $T\subset S_{0}\cap S'_{0}$. The case $a=n, d\neq 1$ is identical.

\item The cases $a\neq 1, d=n$ and $a\neq 1, d\neq n$ may be handled using similar logic; we again omit the details.

\end{itemize}
Thus, in all cases, our algorithm is well-defined, and we are done.
\end{proof}

\begin{proof}[Proof of Theorem \ref{circposthm}] By Lemma \ref{uniclos}, we must in fact have $S_{0}=T=S'_{0}$, provided that neither $S_{0}$ nor $S'_{0}$ is equal to $S_{1}$, which is true by construction (recall that $G'_{0}$ is critical). The proof is complete, in the boundary edge case.

It is left to consider the case in which $S_{0}$ has the $(i,i+1)$-BEP, for each $i$, but fails to have the $i$-BSP, for some $i$. Without loss of generality, suppose that $S_{0}$ does not have the $1$-BSP.  We now form $S_{1}$ as the union of $S_{0}$ and the set of all circular pairs $(P+x;Q+y)$ such that $(P+x;Q+1),(P+1,Q+y)\in S_{0}$, where $(P;Q)$ is a circular pair with $1,x\notin P,1,y\notin Q$.

In Appendix \ref{BSPcase}, we form a circular planar graph $G_{1}$ such that $\pi(G_{1})=S_{1}$ and $G_{1}$ has a boundary spike at 1. Then, contracting this boundary spike to obtain the graph $G_{0}$, we find that $\pi(G_{0})=S_{0}$. Therefore, with the additional results of Appendix \ref{BSPcase}, the theorem is proven.
\end{proof}

\section{The LP Algebra $\mc{LM}_{n}$}\label{lpsec}

We now study the LP Algebra $\mc{LM}_{n}$. Our starting point will be \textit{positivity tests}; a particular positivity test will form the initial seed in $\mc{LM}_{n}$. We then proceed to investigae the algebraic and combinatorial properties of clusters in $\mc{LM}_{n}$.

\subsection{Positivity Tests}

Let $M$ be a symmetric $n\times n$ matrix with row and column sums equal to zero. In this section, we describe tests for deciding if $M$ is the response matrix for an electrical network $\Gamma$ in the top cell of $EP_{n}$, defined in \cite[\S 3]{paper1}. That is, we describe tests for deciding if all of the circular minors of $M$ are positive. These tests are similar to certain tests for total positivity described in \cite{totalpos}. Throughout the remainder of this section, all indices around the circle are considered modulo $n$, and we will refer to circular pairs and their corresponding minors interchangeably.

\begin{defn}
A set $S$ of circular pairs is a \textbf{positivity test} if, for all matrices $M$ whose minors corresponding to $S$ are positive, every circular minor of $M$ is positive (equivalently, $M$ is the response matrix for a top-rank electrical network).
\end{defn}

We begin by describing a positivity test of size $\binom{n}{2}$. Fix $n$ vertices on a boundary circle, labeled $1,2,\ldots,n$ in clockwise order.

\begin{defn}
For two points $a,b\in[n]$, let $d(a,b)$ denote the number of boundary vertices on the arc formed by starting at $a$ and moving clockwise to $b$, inclusive.
\end{defn}

\begin{defn}
A circular pair $(P;Q) = (p_1, \cdots, p_k; q_1, \cdots, q_k)$ is called \textbf{solid} if both sequences $p_1,\ldots,p_k$ and $q_1,\ldots,q_k$ appear consecutively in clockwise order around the circle. Write $d_1 = d_1(P;Q)= d(p_k,q_k)$, and $d_2 = d_2(P;Q)= d(q_1,p_1)$. We will call a solid circular pair $(P;Q)$ \textbf{picked} if one of the following conditions holds:
\begin{itemize}
\item $d_1 \leq d_2$ and $1 \leq p_1 \leq \frac{n}{2}$, or
\item $d_1 \geq d_2$ and $1 \leq q_k \leq \frac{n}{2}$
\end{itemize}
\end{defn}

\begin{defn}
Let $M$ be a fixed symmetric $n\times n$ matrix. Define the set of \textbf{diametric pairs} $\mc{D}_{n}$ to be the set of solid circular pairs $(P;Q)$ such that either $|d_1 - d_2| \leq 1$ or $|d_1 - d_2| = 2$ and $(P;Q)$ is picked. We will refer to the elements of $\mc{D}_{n}$ as circular pairs and minors interchangeably.
\end{defn}

It is easily checked that $|\mc{D}_{n}|=\binom{n}{2}$.

\begin{rmk}
For a solid circular pair $(P;Q)$, we have that $|d_1 - d_2| \equiv n \pmod{2}$, so $\mc{D}_{n}$ consists of the solid circular pairs with $|d_1 - d_2| = 1$ when $n$ is odd, and the solid circular pairs with either $|d_1 - d_2| = 0$, or $|d_1 - d_2| = 2$ and $(P;Q)$ is picked when $n$ is even.
\end{rmk}

Recall (see Remark \ref{flipPQ}) that the circular pairs $(P;Q)$ and $(\widetilde{Q};\widetilde{P})$ will be regarded as the same. Note, for example, that $(P;Q)\in\mc{D}_{n}$ if and only if $(\widetilde{Q};\widetilde{P})\in\mc{D}_{n}$, so the definition of $\mc{D}_{n}$ is compatible with this convention.

\begin{proposition} \label{initialseed}
If $M$ is taken to be an $n\times n$ symmetric matrix of indeterminates, any circular minor is a positive rational expression in the determinants of the elements of $\mc{D}_{n}$.
\end{proposition}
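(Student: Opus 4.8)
The plan is to express every circular minor as a subtraction-free rational function of the diametric minors by an explicit inductive procedure, using the Grassmann--Pl\"{u}cker relations of Proposition \ref{gp} as the rewriting rules. The key observation is that, for a solid circular pair, the quantity $|d_1-d_2|$ measures how far it is from being diametric, and the relations \eqref{P1} and \eqref{P2} let us write a minor with $|d_1-d_2|$ large in terms of minors with $|d_1-d_2|$ smaller (plus a correction term that is itself solid with smaller spread), all with positive coefficients. So the main induction will be on $|d_1-d_2|$, and within a level, a secondary induction on the size $k$ of the circular pair.

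First I would reduce to the case of solid circular pairs. This requires showing that an arbitrary circular minor $M(P;Q)$, with $P,Q$ not necessarily consecutive, can be written subtraction-freely in terms of solid ones. The standard move here is: if $P$ has a "gap", apply \eqref{P1} (or \eqref{P2} when the column set needs enlarging) to express $M(P;Q)$ using circular pairs whose index sets are strictly "more consecutive" in an appropriate partial order; iterating drives us to the solid case. One must check that the two minors on the right-hand side of the Grassmann--Pl\"{u}cker relation that we solve for are genuinely circular pairs (the clockwise-order condition of Definition \ref{circpair}), and that the rearrangement being performed terminates --- this is a routine but slightly fiddly combinatorial bookkeeping argument about arcs on the circle.

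Next, for solid pairs, I would run the main induction on $s = |d_1 - d_2|$. The base cases $s \in \{0,1\}$, and $s = 2$ when the pair is picked, are exactly the elements of $\mc{D}_n$, so there is nothing to prove. For the inductive step, given a solid $(P;Q)$ with $s$ large (or $s = 2$ but \emph{not} picked), pick the appropriate Grassmann--Pl\"{u}cker relation: removing a boundary vertex from the "longer" of the two arcs $d_1, d_2$ and reattaching it so as to balance the two arcs. Concretely, if $d_1 > d_2$, one uses a relation of the form \eqref{P1} in which $\Delta^{a,c}\Delta^{b,d}$ is the term we want to isolate, so that $\Delta^{a,c}\Delta^{b,d} = \Delta^{a,d}\Delta^{b,c} + \Delta^{ab,cd}\Delta^{\emptyset,\emptyset}$, and all three minors on the right have strictly smaller spread (two of them solid of the same size, one solid of size one smaller); divide through to solve for the minor of interest. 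The case $n$ even with $|d_1 - d_2| = 2$ not picked is handled by the same relation but now the "smaller spread" terms are the $|d_1-d_2| \le 1$ minors together with a picked $|d_1-d_2|=2$ minor, so we still land inside $\mc{D}_n$ after one step; the choice of which vertex to peel off is dictated precisely by the "picked" condition, so that the correction term is on the correct side of $n/2$.

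The main obstacle I anticipate is verifying that the minors produced at each step are actually solid circular pairs with \emph{strictly} smaller value of the induction parameter, and in particular that in the $n$ even, $|d_1-d_2|=2$, not-picked case the single Grassmann--Pl\"{u}cker step really produces only elements of $\mc{D}_n$ (i.e.\ that the correction term, which has $|d_1-d_2|$ possibly still equal to $2$, comes out \emph{picked}) --- this is where the somewhat unmotivated-looking definition of "picked" via the $n/2$ threshold does its work, and it must be checked by a careful case analysis on the positions of $p_1$ and $q_k$ relative to the diameter. A secondary nuisance is keeping track of the convention $(P;Q) = (\widetilde{Q};\widetilde{P})$ so that "solid" and "picked" are well defined and the relations respect it, but this is bookkeeping rather than a genuine difficulty. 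Termination of the whole procedure follows because each step strictly decreases the lexicographic pair $(\,|d_1-d_2|,\ \text{size}\,)$ (with the convention that a size-zero minor equals $1$), and positivity of the final expression is automatic since every coefficient introduced is $1$ and we only ever multiply, add, and divide by minors we have already shown to be positive rational expressions in $\mc{D}_n$.
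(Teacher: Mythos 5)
Your plan is essentially the paper's own proof: solid pairs are handled by induction on $|d_1-d_2|$, isolating the minor of interest on the left of (\ref{P1}) applied to a one-vertex extension and dividing by the picked (or smaller-spread) partner minor, while general pairs are reduced to solid ones by iterated Grassmann--Pl\"{u}cker relations of type (\ref{P2}); the paper makes your ``fiddly bookkeeping'' for that reduction precise via a statistic $\Phi$ measuring the gaps in $P$ and $Q$. Apart from performing the two stages in the opposite order and some small descriptive slips about which terms appear on the right-hand side, this is the same argument.
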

\begin{proof}
We will make use of the Grassmann-Pl\"{u}cker relations, (\ref{P1}) and (\ref{P2}), so we repeat them here:

For $(a,b;c,d)$ a circular pair,
\begin{align}\tag{\ref{P1}} \Delta^{a,c}\Delta^{b,d}=\Delta^{a,d}\Delta^{b,c}+\Delta^{ab,cd}\Delta^{\emptyset,\emptyset} \end{align}
and for $a,b,c,d$ in clockwise order,
\begin{align}\tag{\ref{P2}} \Delta^{b,\emptyset}\Delta^{ac,d}=\Delta^{a,\emptyset}\Delta^{bc,d}+\Delta^{c,\emptyset}\Delta^{ab,d}\end{align}

We will first show, by induction on $|d_{1}-d_{2}|$, that any solid circular pair is a positive rational expression in the elements of $\mc{D}_{n}$. There is nothing to check when $|d_{1}-d_{2}|$ is equal to 0 (when $n$ is even) or 1 (when $n$ is odd). If $(P;Q) = (p_1,\ldots,p_k; q_1,\ldots,q_k)$ is a solid circular pair such that $|d_1 - d_2| = 2$ (hence, $n$ is even) and $(P;Q)$ is not picked, then either $(p_1 - 1, p_1, \ldots, p_k; q_1 + 1, q_1, \ldots, q_k)$ or $(p_1, \ldots, p_k, p_k + 1; q_1, \ldots, q_k, q_k - 1)$ is a solid circular pair with $|d_1 - d_2| = 2$, and it must be picked. Assume, without loss of generality, that it is the former, and let $p_0 = p_1 - 1$, $q_0 = q_1 + 1$. Letting $\Delta = (p_0, \ldots, p_k; q_0, \ldots, q_k)$, we have, by (\ref{P1}), that:

\begin{align} \label{apply1}
\Delta^{p_0, q_0} = \frac{\Delta^{p_0, q_k} \Delta^{p_k,q_0} + \Delta^{p_0 p_k, q_0 q_k}   \Delta^{\emptyset, \emptyset}}{\Delta^{p_k,q_k}}.
\end{align}

Because $(p_1,\ldots,p_k; q_1,\ldots,q_k) = \Delta^{p_0, q_0}$ is not picked, $\Delta^{p_k,q_k}$ corresponds to a picked circular pair with $|d_1 - d_2| = 2$, and $\Delta^{p_0, q_k}, \Delta^{p_k,q_0}, \Delta^{p_0 p_k, q_0 q_k},$ and $\Delta^{\emptyset, \emptyset}$ all have $|d_1 - d_2| = 0$, so we have that $(P;Q)$ is a rational expression of elements of $\mc{D}_{n}$.

Now, for $m \geq 3$, assume that all solid pairs with $|d_1 - d_2| < m$ are positive rational expression in the elements of $\mc{D}_{n}$, and consider a solid pair $(P;Q)$ with $|d_1 - d_2| = m$. Then, either $(p_1 - 1, p_1, \ldots, p_k; q_1 + 1, q_1, \ldots, q_k)$ or $(p_1, \ldots, p_k, p_k + 1; q_1, \ldots, q_k, q_k - 1)$ is a solid circular pair with $|d_1 - d_2| < m$. Assume, without loss of generality, that the former is the case. Then, we may again set $p_0 = p_1 - 1$, $q_0 = q_1 + 1$, and $\Delta=(p_0, \ldots, p_k; q_0, \ldots, q_k)$, and (\ref{apply1}) still holds. Each term on the right hand side corresponds to a solid pair with a smaller value of $|d_1 - d_2|$, so $(P;Q)$ is a positive rational expression in the elements of $\mc{D}_{n}$, by the inductive hypothesis.

We now show that any circular pair is a positive rational expression in the elements of $\mc{D}_{n}$. For a sequence $P = p_1, \ldots, p_k$ of points ordered clockwise around the circle, let $c_P \in \{1,\ldots,k\}$ be the largest index such that $p_1, \ldots, p_{c_P}$ are consecutive. If $c_P < k$, then define $d_3(P) = d(p_{c_P}, p_{c_P + 1})$ and $d_4(P) = d(p_1, p_k)$. If, on the other hand, $c_P = k$, define $d_3(P) = d_4(P) = 0$. Similarly, for a sequence $Q = q_1, \ldots, q_k$ of points ordered counterclockwise around the circle, let $c_Q \in \{1,\ldots,k\}$ be the smallest index such that $q_k, \ldots, q_{c_Q}$ are consecutive. If $c_P  >1$, then define $d_3(Q) = d(p_{c_Q + 1}, p_{c_Q})$ and $d_4(Q) = d(q_k, q_1)$. If, on the other hand, $c_Q = 1$, define $d_3(P) = d_4(P) = 0$.

For a circular pair $(P;Q)$, define $\Phi((P;Q)) = d_3(P) + d_4(P) + d_3(Q) + d_4(Q)$, and note that $\Phi((P;Q))=\Phi((\widetilde{Q};\widetilde{P}))$, so $\Phi$ is well-defined when we impose the convention $(P;Q)=(\widetilde{Q};\widetilde{P})$. We finish the proof by showing that any circular pair is a positive rational expression in the elements of $\mc{D}_{n}$ by induction on $\Phi$.

If $\Phi((P;Q)) = 0$, then $(P;Q)$ is solid and hence a rational expression in the elements of $\mc{D}_{n}$. Now, assume that for any $m>0$, every circular pair $(P';Q')$ with $\Phi((P';Q')) < m$ is a rational expression in the elements of $\mc{D}_{n}$, and consider a circular pair $(P;Q)$ with $\Phi((P;Q)) = m$. Assume, without loss of generality, that $c_{P}\neq k$, and let $\ell = p_{c_{P}} +1$. Applying (\ref{P1}) to $\Delta = (p_1, \ldots, p_{c_{P}}, \ell, p_{c_{P}+1}, \ldots, p_k; q_1, \ldots, q_k)$, we get:

\begin{align} \label{apply2}
\Delta^{\ell, \emptyset} = \frac{\Delta^{p_1, \emptyset} \Delta^{\ell p_k, q_k} + \Delta^{p_k, \emptyset}  \Delta^{p_1 \ell, q_k}}{\Delta^{p_1 p_k, q_k}}.
\end{align}
Each term on the right hand side is easily seen to have a smaller value of $\Phi$ than $m$, so we are done by induction.
\end{proof}

\begin{corollary}\label{postestcor}
$\mc{D}_{n}$ is a positivity test.
\end{corollary}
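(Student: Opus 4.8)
The plan is to derive this corollary directly from Proposition \ref{initialseed} by a specialization argument, observing that essentially nothing further is needed. First I would unwind the definition: to show $\mc{D}_n$ is a positivity test I must take an arbitrary $n\times n$ matrix $M$ whose circular minors indexed by the pairs in $\mc{D}_n$ are all positive, and prove that every circular minor of $M$ is positive. Now Proposition \ref{initialseed} tells us that, working with the generic symmetric matrix of indeterminates, each circular minor equals a \emph{positive} (that is, subtraction-free) rational expression in the symbols $\Delta_\sigma$ for $\sigma \in \mc{D}_n$ — an expression built from these symbols using only $+$, $\times$, and $\div$. The remaining step is to check that this formal identity in the field of fractions survives the substitution $\Delta_\sigma \mapsto M(\sigma)$: since a subtraction-free expression evaluated at positive real numbers has all of its sub-expressions — in particular all of its denominators — positive, no denominator vanishes, the substitution is legitimate, and the value obtained for $M(P;Q)$ is positive. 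As $(P;Q)$ was arbitrary, every circular minor of $M$ is positive, which is exactly the defining property of a positivity test. The parenthetical reformulation in terms of top-rank electrical networks is then immediate from Theorem \ref{circularminorspositive} in the case that $M$ is symmetric with vanishing row and column sums.

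I do not expect a genuine obstacle here, since the substantive combinatorics was already carried out in Proposition \ref{initialseed}; the only point deserving care is the distinction between a formal rational-function identity and a numerical inequality. I would therefore make explicit in the write-up that "positive rational expression" is being used to mean "subtraction-free," and that subtraction-freeness is precisely what simultaneously guarantees that the denominators are nonzero under the specialization and that the specialized value is positive. If one wanted to be maximally scrupulous about the denominators, I would add the remark that every denominator arising in the recursive construction in the proof of Proposition \ref{initialseed} is itself one of the circular minors that the induction has already expressed as a positive rational expression in the elements of $\mc{D}_n$, so their positivity is part of the same induction and not an additional hypothesis.
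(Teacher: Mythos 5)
Your proposal is correct and is exactly the route the paper intends: the paper states Corollary \ref{postestcor} as an immediate consequence of Proposition \ref{initialseed}, and your specialization argument (subtraction-free expressions evaluated at positive values are positive, with the denominators being circular minors already handled earlier in the induction, hence nonzero) is the right way to make that deduction explicit.
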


\subsection{$\mc{CM}_{n}$ and $\mc{LM}_{n}$}

The positive rational expressions from the previous section are reminiscent of a cluster algebra structure (see \cite[\S 3]{fominpasha} for definitions). In fact, (\ref{P1}) and (\ref{P2}) are exactly the exchange relations for the local moves in double wiring diagrams \cite[Figure 9]{totalpos}. Due to parity issues similar to those encountered when attempting to associate a cluster algebra to a non-orientable surface in \cite{dp}, the structure of positivity tests is slightly different from a cluster algebra. We present the structure in two different ways: first, as a Laurent phenomenon (LP) algebra $\mc{LM}_{n}$ (see \cite[\S 2,3]{lp} for definitions), and secondly as a cluster algebra $\mc{CM}_{n}$ similar to the double cover cluster algebra in \cite{dp}. $\mc{LM}_{n}$, we will find, is isomorphic to the polynomial ring on $\binom{n}{2}$ variables, but more importantly encodes the information of the positivity of the circular minors of a fixed $n\times n$ matrix.

We begin by describing an undirected graph $U_n$ that encodes the desired mutation relations among our initial seed. The vertex set of $U_n$ will be $V_n = \mc{D}_{n} \cup \{(\emptyset; \emptyset) \}$.

\begin{defn}
A solid circular pair $(p_1, \ldots,p_k; q_1, \ldots, q_k)$ is called \textbf{maximal} if $2k+2 > n$ or $2k+2=n$ and $d_1=d_2$. A solid circular pair $(P;Q)=(p_1, \ldots,p_k; q_1, \ldots, q_k)$ is called \textbf{limiting} if $|d_1 - d_2| = 2$, $(P;Q)$ is picked, and $1 = p_1$ or $1 = q_k$.
\end{defn}

Let us now describe the edges of $U_{n}$: see Figure \ref{Q8} for an example. For each $(P;Q) \in V_n$ that is not maximal, limiting, or empty (that is, equal to $\epair$), there is a unique way to substitute values in \ref{P1} such that $(P;Q)$ appears on the left hand side, and all four terms on the right hand side are in $V_n$. We draw edges from $(P;Q)$ to these four vertices in $U_n$. Finally, if $(P;Q), (R;S) \in V_n$ are limiting, we draw an edge between them if their sizes differ by $1$. The edges drawn in these two cases constitute all edges of $V_{n}$.

For any maximal circular pair $(P;Q)$, it can be proven that there exists a symmetric matrix $A$ such that $A$ is positive on any circular pair except $A(P;Q) \leq 0$. In fact, if $(P;Q)$ is maximal and has $|d_1 - d_2| \leq 1$, then the set of all circular pairs other than $(P;Q)$ is an electrical positroid. Hence, in our quivers, we will take the vertices corresponding to the maximal circular pairs and $\epair$ to be frozen.

$U_n$ can then be embedded in the plane in a natural way with the circular pairs of size $k$ lying on the circle of radius $k$ centered at $\epair$, and all edges except those between vertices corresponding to limiting circular pairs either along those circles or radially outward from $\epair$.

\begin{figure}
\begin{center}
\includegraphics[scale=1]{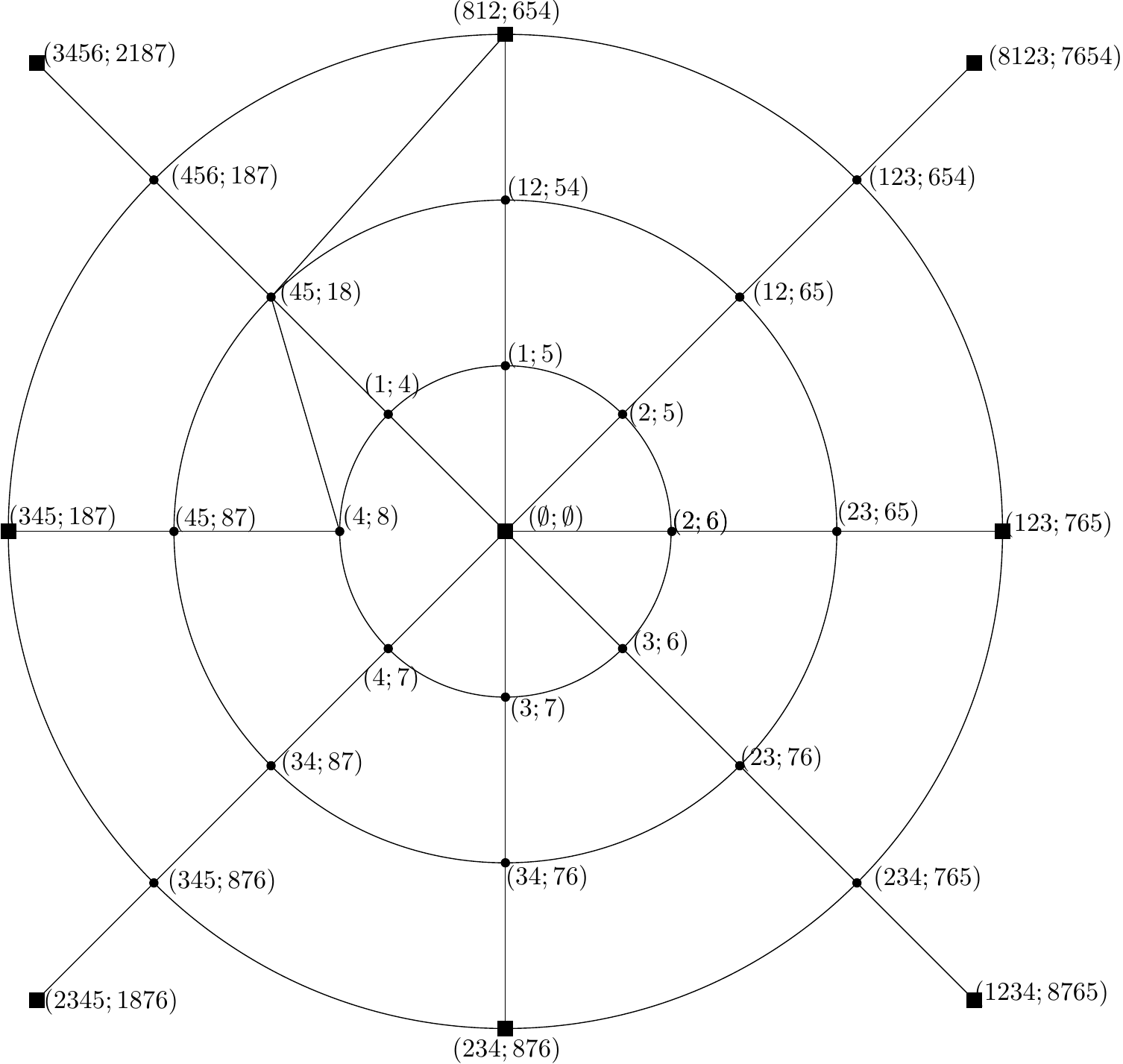}
\caption{The graph $U_8$ depicting the desired exhange relations among $\mc{D}_{8}$. Vertices marked as squares correspond to frozen variables. $(4;8)$, $(45;18)$ and $(812;654)$ are the limiting circular pairs.}\label{Q8}
\end{center}
\end{figure}

If we could orient the edges of $U_n$ such that they alternate between in- and out-edges at each non-frozen vertex, then the resulting quiver would give a cluster algebra such that mutations at vertices whose associated cluster variables are neither frozen not limiting correspond to the relation Grassmann-Pl\"{u}cker relation (\ref{P1}). Furthermore, these mutation relations among the vertices of $V_n$ constitute all of the Grassmann-Pl\"{u}cker relations for which five of the six terms on the right hand side are elements in $V_n$, and the term which is not in $V_{n}$ is on the left hand side of the relation (\ref{P1}) or (\ref{P2}). However, for $n \geq 5$, such an orientation of the edges of $U_{n}$ is impossible, because the dual graph of $U_{n}$ contains odd cycles. We thus define:

\begin{defn} \label{deflmn}
Let $\mc{LM}_{n}$ be the LP algebra constructed as follows: the initial seed $\mc{S}_n$ has cluster variables equal to the minors in $V_n$, with the maximal pairs and $\epair$ frozen, and, for any other $(P;Q) \in V_n$, the exchange polynomial $F_{(P;Q)}$ is the same as what is obtained from a quiver with underlying graph $U_{n}$, such that the edges around the vertex associated to $(P;Q)$ in $U_n$ alternate between in- and out-edges.
\end{defn}

For example, in $\mc{LM}_{8}$, the exchange polynomial associated to the cluster variable $x_{(12;54)}$ is $x_{(45;18)}x_{(12;65)}+x_{(1;5)}x_{(812;654)}$. We need the additional technical condition that $F_{(P;Q)}$ is irreducible as a polynomial in the cluster variables $V_{n}$, but the irreducibility is clear.

We next define a cluster algebra $\mc{CM}_{n}$ which is a double cover of positivity tests, in the following sense: we begin consider an $n\times n$ matrix $M'$, which we no longer assume to be symmetric. We write \textbf{non-symmetric circular pairs} in the row and column sets of $M'$ as $(P;Q)'$, so that $(P;Q)'$ and $(\widetilde{Q};\widetilde{P})'$ now represent different circular pairs. We will say that two expressions $A,B$ in the entries of $M'$ \textbf{correspond} if swapping the rows and columns for each entry in $A$ gives $B$, and we will write $B = c(A)$. For instance, $(P;Q)' = c((Q;P)')$.

The set of cluster variables $V'_{n}$ in our initial seed will consist of pairs $(P;Q)'$ such that $(P;Q) \in V_{n}$. Note that $|V'_{n}| = 2 \binom{n}{2} + 1$, as $V'_{n}$ contains $(P;Q)'$ and $(\widetilde{Q};\widetilde{P})'$ for each $(P;Q) \in \mc{D}_n$, and finally $\epair$. $(P;Q)'$ will be frozen in $V'_n$ if $(P;Q)$ was frozen in $V_n$.

We construct the undirected graph $U'_n$ with vertex set $V'_n$ by adding edges in the same way that $U_n$ was constructed. The only difference in our description is that  if $(P;Q), (R;S) \in V_n$ are limiting, then they will be adjacent only if their sizes differ by $1$ and $P \cap R \neq \emptyset$. See Figure \ref{Qp} for an example.

\begin{figure}
\begin{center}
\includegraphics[scale=0.7]{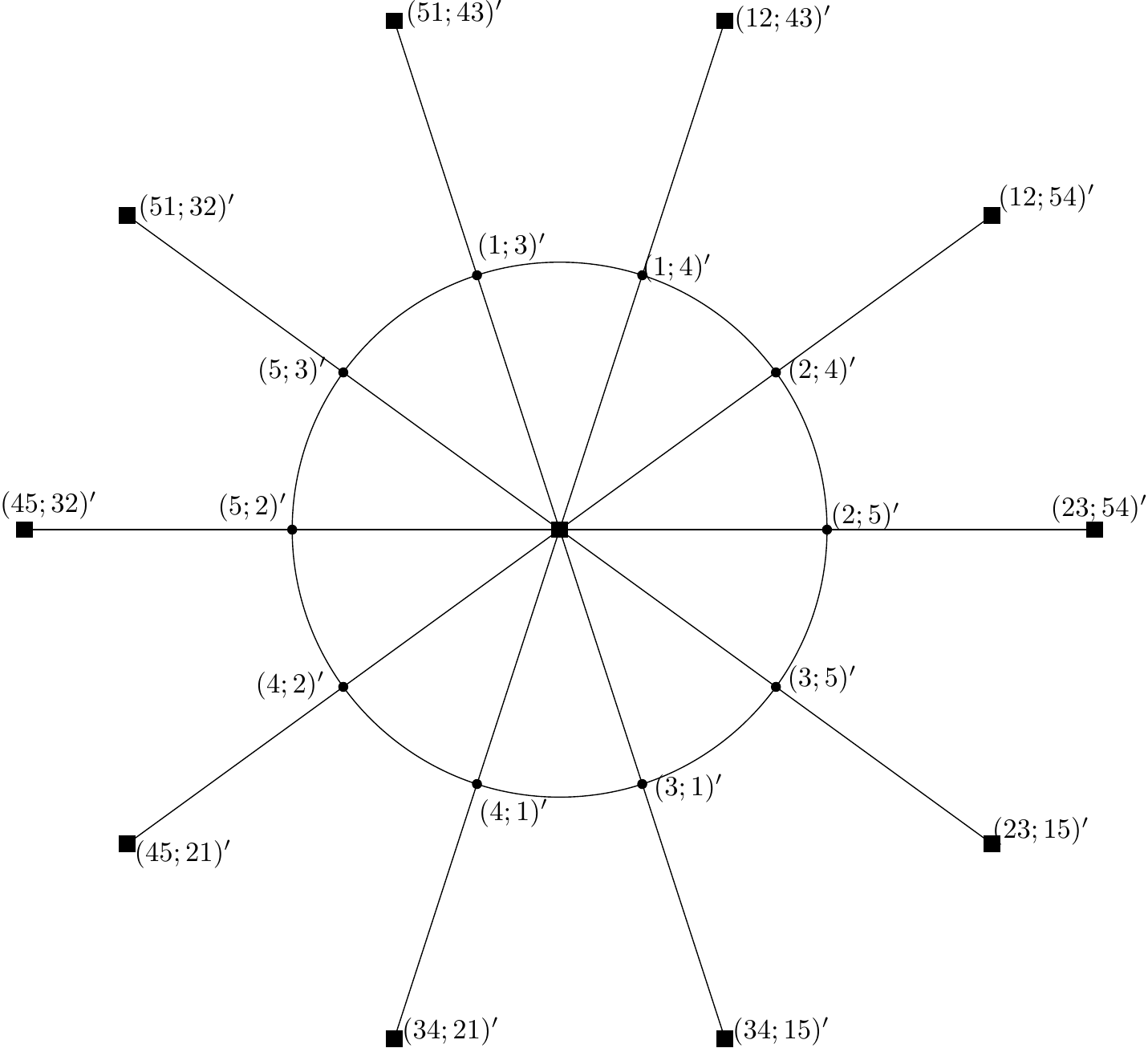}
\caption{The graph $U'_5$. In the quiver $\mc{Q}_{n}$, the edges alternate directions around a non-frozen vertex.}
\label{Qp}
\end{center}
\end{figure} 

Unlike in $U_n$, the edges of $U'_n$ can be oriented such that they are alternating around each non-frozen vertex. Let $\mc{Q}_n$ be the quiver from either orientation. Then, let $\mc{CM}_n$ be the cluster algebra with initial quiver $\mc{Q}_n$.

Breaking the symmetry of $M'$ removed the parity problems from $U_n$, so that we could define a cluster algebra, but we are still interested in using $U'_n$ to study $M$ when $M$ is symmetric. Toward this goal, we can restrict ourselves so that whenever we mutate at a cluster variable $v$, we then mutate at $c(v)$ immediately afterward. Call this restriction the \textbf{symmetry restriction}.

\begin{lemma} \label{symm}
After the mutation sequence $\mu_{x_1}, \mu_{c(x_1)}, \mu_{x_2}, \mu_{c(x_2)}, \ldots, \mu_{x_r}, \mu_{c(x_r)}$ from the initial seed in $\mc{CM}_n$, the number of edges from $x$ to $y$ in the quiver is equal to the number of edges from $c(y)$ to $c(x)$ for each $x,y$ in the final quiver.
\end{lemma}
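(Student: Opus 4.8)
The plan is to prove this by induction on $r$, tracking the symmetry of the quiver under the paired mutations. Let me set up the right invariant. For a quiver $\mathcal{Q}$ on the vertex set $V'_n$, write $e_{\mathcal{Q}}(x,y)$ for the number of edges from $x$ to $y$ (with the convention that the signed count is $e_{\mathcal{Q}}(x,y) - e_{\mathcal{Q}}(y,x)$, since after mutations the quiver stays without $2$-cycles, but it is cleanest to phrase everything in terms of the skew-symmetric adjacency matrix $B_{\mathcal{Q}}$). The claim is precisely that $B_{\mathcal{Q}}$ is \emph{anti-symmetric under $c$} in the sense that $(B_{\mathcal{Q}})_{x,y} = (B_{\mathcal{Q}})_{c(y),c(x)}$ for all $x,y$; equivalently, if $\sigma$ denotes the permutation of $V'_n$ induced by $c$ (an involution) and $P_\sigma$ its permutation matrix, then $P_\sigma B_{\mathcal{Q}} P_\sigma = B_{\mathcal{Q}}^{T} = -B_{\mathcal{Q}}$. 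So the whole statement becomes: this property is preserved by the operation $\mu_x$ followed by $\mu_{c(x)}$, starting from $\mathcal{Q}_n$.

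First I would verify the base case: the initial quiver $\mathcal{Q}_n$ has $P_\sigma B_{\mathcal{Q}_n} P_\sigma = -B_{\mathcal{Q}_n}$. This is a direct check from the construction of $U'_n$ and the chosen orientation. The undirected graph $U'_n$ is manifestly $c$-symmetric as a graph (edges come from Grassmann--Plücker substitutions, and $c$ carries the substitution for $(P;Q)'$ to the substitution for $(\widetilde Q;\widetilde P)'$; the limiting-pair edges are defined by a $c$-invariant condition), so $c$ is a graph automorphism of $U'_n$. The point is that the chosen alternating orientation, when pushed through $c$, gives the \emph{reverse} orientation — I should note that $c$ has no fixed non-frozen vertex (since $(P;Q) = (\widetilde Q; \widetilde P)$ would force... actually one must check this, but for a genuine circular pair of positive size $P \neq \widetilde Q$ as ordered sets unless $P=Q$ which is impossible since $P,Q$ disjoint; and $\epair$ is frozen), so the orientation can be (and is, by the construction) chosen consistently so that $c$ reverses all arrows. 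This gives the base case.

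Next, the inductive step. Assume after $\mu_{x_1},\mu_{c(x_1)},\dots,\mu_{x_{r-1}},\mu_{c(x_{r-1})}$ we have a quiver $\mathcal{Q}$ with $P_\sigma B_{\mathcal{Q}} P_\sigma = -B_{\mathcal{Q}}$; I want the same after $\mu_{x_r} \mu_{c(x_r)}$. The key algebraic fact is the \emph{equivariance of quiver mutation}: for any permutation $\tau$ of the vertices and any vertex $v$, $\mu_{\tau(v)}(\tau \cdot \mathcal{Q}) = \tau \cdot \mu_v(\mathcal{Q})$, where $\tau \cdot \mathcal{Q}$ is the relabeled quiver. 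Also mutation commutes with reversing all arrows: $\mu_v(\mathcal{Q}^{\mathrm{op}}) = (\mu_v(\mathcal{Q}))^{\mathrm{op}}$, since the mutation rule $B_{i,j} \mapsto -B_{i,j}$ if $i$ or $j$ equals $v$, else $B_{i,j} + \tfrac12(|B_{i,v}|B_{v,j} + B_{i,v}|B_{v,j}|)$ (in the sign convention) is odd in $B$. Combining, the hypothesis says $\mathcal{Q}^{\mathrm{op}} = \sigma \cdot \mathcal{Q}$ (as labeled quivers, via $B_{\mathcal{Q}}^{\mathrm{op}} = -B_{\mathcal{Q}} = P_\sigma B_{\mathcal{Q}} P_\sigma$). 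Then
\begin{align*}
(\mu_{x_r}\mu_{c(x_r)}\mathcal{Q})^{\mathrm{op}} &= \mu_{x_r}\mu_{c(x_r)}(\mathcal{Q}^{\mathrm{op}}) = \mu_{x_r}\mu_{c(x_r)}(\sigma\cdot\mathcal{Q}) = \mu_{x_r}\bigl(\sigma\cdot \mu_{\sigma^{-1}(c(x_r))}\mathcal{Q}\bigr),
\end{align*}
and since $\sigma^{-1}(c(x_r)) = x_r$ (as $\sigma$ realizes $c$ and $c$ is an involution), this is $\mu_{x_r}(\sigma \cdot \mu_{x_r}\mathcal{Q}) = \sigma \cdot \mu_{\sigma^{-1}(x_r)}\mu_{x_r}\mathcal{Q} = \sigma \cdot \mu_{c(x_r)}\mu_{x_r}\mathcal{Q}$. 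Finally I need $\mu_{c(x_r)}\mu_{x_r} = \mu_{x_r}\mu_{c(x_r)}$ as operations on this quiver; this holds because $x_r$ and $c(x_r)$ are distinct vertices with \emph{no edge between them} in $\mathcal{Q}$ — mutations at non-adjacent vertices commute — which one verifies holds throughout the mutation sequence (it holds initially in $\mathcal{Q}_n$ since $U'_n$ has no edge between a vertex and its $c$-image, and one checks the paired-mutation sequence preserves this, or one argues it separately). That closes the induction.

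The main obstacle I anticipate is the commutation input $\mu_{x_r}$ and $\mu_{c(x_r)}$ commute — i.e., controlling the absence of an edge between $x$ and $c(x)$ throughout the sequence. In $\mathcal{Q}_n$ this is visible (a diametric minor is never adjacent to its own transpose), but after several paired mutations one needs to know it persists; I would handle this by showing that the $c$-anti-symmetry invariant itself, together with the structural features of the clusters that arise under the symmetry restriction (all reachable cluster variables being circular minors or their $c$-images of the expected combinatorial type), forbids such an edge — or, more robustly, by folding: one can present $\mathcal{Q}/\sigma$ as the relevant "folded" object and phrase the paired mutation as a single mutation there, in which case the commutation is automatic and the lemma reduces to the well-definedness of folded mutation under the given hypotheses. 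Either route is routine once the equivariance-plus-op-reversal mechanism above is in place; that mechanism is the heart of the argument.
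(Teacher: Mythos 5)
Your overall route is the same as the paper's: induct on $r$, get the base case from the construction of the initial seed, and make the inductive step work because the two mutations of a symmetric pair do not interfere --- equivalently, because $x_r$ and $c(x_r)$ are not adjacent, so $\mu_{x_r}$ creates or removes no edge incident to $c(x_r)$, and $\mu_{c(x_r)}$ then performs exactly the mirrored changes. Your equivariance-plus-arrow-reversal formalism is a clean way to package that step, and your observation that $c$ fixes no non-frozen vertex is correct.

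The genuine gap is precisely the point you defer: non-adjacency of $x_r$ and $c(x_r)$ in every quiver reached under the symmetry restriction. Neither of your suggested fixes is carried out, and the first cannot work as stated: the invariant $e(x,y)=e(c(y),c(x))$ is vacuous on the pair $(x,c(x))$ (put $y=c(x)$; both sides are the same count), so the ``$c$-anti-symmetry invariant itself'' cannot forbid such an edge, and ``folding'' is only named, not set up. The paper closes this in one sentence from the inductive hypothesis: an edge between $x_r$ and $c(x_r)$ would be mirrored to an edge between the same two vertices in the opposite direction, i.e.\ a $2$-cycle, which cannot survive mutation; hence they are non-adjacent, no edge incident to $c(x_r)$ changes under $\mu_{x_r}$, and $\mu_{c(x_r)}$ then makes the symmetric changes (this also gives the commutation you wanted). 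Note that this mirroring argument reads the symmetry in the form matching $x\to y$ with $c(x)\to c(y)$; under the literal form in the lemma statement (the one you adopt, with $c$ reversing all arrows) the pair $(x_r,c(x_r))$ is self-paired and yields nothing --- which is presumably why you hesitated, and it is in tension with your base-case claim that $c$ reverses the orientation of $\mc{Q}_n$. To finish, you must either verify that the orientation of $\mc{Q}_n$ satisfies the version of the symmetry that does forbid an edge between a vertex and its $c$-image and propagate that version by the same induction, or supply an independent argument for the non-adjacency; declaring it routine leaves the crux of the lemma unproved.
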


\begin{proof}
We proceed by induction on $r$; for $r=0$, we have the claim by construction. Now, suppose that we have performed the mutations $\mu_{x_1}, \mu_{c(x_1)}, \mu_{x_2}, \mu_{c(x_2)}, \ldots, \mu_{x_{r-1}}, \mu_{c(x_{r-1})}$ and currently have the desired symmetry property. By the inductive hypothesis, $x_r$ and $c(x_r)$ are not adjacent, or else we would have had edges between them in both directions, which would have been removed after mutations. Thus, no edges incident to $c(x_r)$ are created or removed upon mutating at $x_r$. Hence, mutating at $c(x_r)$ afterward makes the symmetric changes to the graph, as desired.
\end{proof}

\begin{defn}
Let $\C[M]$ and $\C[M']$ denote the polynomial rings in the off-diagonal entries of $M$ and $M'$ respectively; recall that $M$ is symmetric, so $M_{ij} = M_{ji}$. Then, we can define the \textbf{symmetrizing homomorphism} $C : \C[M'] \to \C[M]$ by its action on the off-diagonal entries of $M'$:
\[C(M'_{ij}) = C(M'_{ji}) = M_{ij}.\]
If $S$ is a set of polynomials in $\C[M']$, then write $C(S) = \{ C(s) \mid s \in S \}$.
\end{defn}

\begin{lemma} \label{isom}
Let $L'_1$ be the cluster of $\mc{CM}_n$ that results from starting at the initial cluster and performing the sequence of mutations $\mu_{x_1}, \mu_{c(x_1)}, \mu_{x_2}, \mu_{c(x_2)}, \ldots, \mu_{x_r}, \mu_{c(x_r)}$. Let $L_2$ be the cluster of $\mc{LM}_n$ that results from starting at the initial cluster and performing the sequence of mutations $\mu_{x_1}, \mu_{x_2}, \ldots, \mu_{x_r}$. Then, $C(L'_1) = L_2$.
\end{lemma}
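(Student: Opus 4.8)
The plan is to prove Lemma \ref{isom} by induction on $r$, showing simultaneously that at each stage the cluster variables of the two algebras match up under the symmetrizing homomorphism $C$, and that the combinatorial data (exchange polynomials in $\mc{LM}_n$, quiver in $\mc{CM}_n$) stay in the correspondence dictated by Lemma \ref{symm}. More precisely, I would set up the induction hypothesis as: after $\mu_{x_1},\mu_{c(x_1)},\ldots,\mu_{x_r},\mu_{c(x_r)}$ in $\mc{CM}_n$ producing cluster $L'_1$, and $\mu_{x_1},\ldots,\mu_{x_r}$ in $\mc{LM}_n$ producing cluster $L_2$, we have (i) $C(L'_1)=L_2$ as sets, with $C(v')=C(c(v')\,)$ for each variable $v'$ of $L'_1$, so the $2\binom n2+1$ variables of $L'_1$ map onto the $\binom n2+1$ variables of $L_2$ two-to-one (off the frozen/empty part); and (ii) for the variable $x_{r+1}$ about to be mutated, the exchange polynomial $F_{x_{r+1}}$ in $\mc{LM}_n$ equals $C$ applied to the product-of-two-terms exchange polynomial obtained from the $\mc{CM}_n$ quiver at $x_{r+1}$, in the sense that the two monomials of the $\mc{LM}_n$ exchange polynomial are the $C$-images of the two "halves" coming from the in-edges at $x_{r+1}$ and at $c(x_{r+1})$ in $\mc{Q}_n$.

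The base case $r=0$ is exactly the construction: $V'_n$ maps onto $V_n$ under $C$ since $C((P;Q)')=C((\widetilde Q;\widetilde P)')=(P;Q)$, the frozen sets correspond, and the exchange polynomial $F_{(P;Q)}$ in $\mc{LM}_n$ was defined (Definition \ref{deflmn}) to be precisely the one read off from the alternating orientation of $U_n$ around $(P;Q)$ — which is the $C$-image of the $\mc{Q}_n$ data at $(P;Q)'$ and $(Q;P)'$ because $U'_n$ was built from $U_n$ by the stated doubling rule. For the inductive step, I would use Lemma \ref{symm}: since after the symmetric mutation sequence $x_{r+1}$ and $c(x_{r+1})$ are non-adjacent in $\mc{Q}_n$, mutating at $x_{r+1}$ and then at $c(x_{r+1})$ makes two "independent" changes. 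The new variable $x_{r+1}^{\text{new}}$ in $\mc{CM}_n$ satisfies $x_{r+1}^{\text{old}}\,x_{r+1}^{\text{new}} = (\text{product over in-edges}) + (\text{product over out-edges})$; applying $C$ and using the induction hypothesis that the incident variables already correspond, the right-hand side becomes exactly the two-monomial exchange polynomial $F_{x_{r+1}}$ evaluated at the current $\mc{LM}_n$ cluster. This is precisely the LP-algebra mutation relation for $x_{r+1}$ in $\mc{LM}_n$ (here one must invoke that the LP exchange polynomial $F_{x_{r+1}}$ has no common factors with the relevant cluster variables, so no cancellation of Laurent denominators occurs — this is the irreducibility remark after Definition \ref{deflmn}), so $C(x_{r+1}^{\text{new}}) = $ the new $\mc{LM}_n$ variable. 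Mutating further at $c(x_{r+1})$ replaces $c(x_{r+1})^{\text{old}}$ by $c(x_{r+1}^{\text{new}})$, which has the same $C$-image, so $C(L'_1)$ is unchanged as a set and still equals the updated $L_2$; finally one checks the combinatorial half — that the mutated quiver of $\mc{Q}_n$ around the new vertices still satisfies the symmetry of Lemma \ref{symm} (which it does, by that lemma) and that the induced exchange data at whatever vertex $x_{r+2}$ comes next is again the $C$-image of the $\mc{LM}_n$ exchange polynomial, which follows because mutation of $U_n$-type quivers and mutation of the double-covered $U'_n$-type quivers are intertwined by the folding/doubling correspondence.

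The main obstacle I anticipate is the bookkeeping needed to show that LP-algebra mutation in $\mc{LM}_n$ and (the symmetric pair of) cluster-algebra mutations in $\mc{CM}_n$ really do intertwine at the level of exchange polynomials for \emph{all} subsequent mutations, not just the first — i.e., that the class of exchange polynomials arising stays closed under this correspondence. LP algebras in general have more complicated mutation rules than cluster algebras (exchange polynomials can pick up and shed factors), so the crux is to argue that within $\mc{LM}_n$ the relevant exchange polynomials always remain binomials of the form (two cluster variables) $+$ (two cluster variables) coming from Grassmann–Plücker relations, matching the quadratic exchange relations on the $\mc{CM}_n$ side; the fact that $\mc{CM}_n$ genuinely has the alternating orientation while $U_n$ does not is what forces the LP (rather than cluster) structure, and one has to verify that the "symmetry restriction" exactly kills the discrepancy. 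I would handle this by observing that the non-adjacency of $v$ and $c(v)$ throughout (Lemma \ref{symm}) means every symmetric double-mutation in $\mc{CM}_n$ projects under $C$ to an honest single mutation of the $U_n$-type data, so the projected exchange relations are automatically the Grassmann–Plücker ones; then Proposition \ref{initialseed}'s use of (\ref{P1}) and (\ref{P2}) guarantees these are precisely the $\mc{LM}_n$ exchange relations. Everything else (frozen variables, the empty pair, matching of sizes of limiting pairs) is routine once this core intertwining is in place.
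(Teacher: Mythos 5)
Your proposal is correct and is essentially the paper's argument: the paper proves this lemma in one line by invoking Lemma \ref{symm} together with the folding/symmetrization argument of Lam--Pylyavskyy (\cite[Proposition 4.4]{lp}), which is precisely the induction you carry out by hand (non-adjacency of $v$ and $c(v)$, symmetric quiver, and $C$ intertwining the paired $\mc{CM}_n$ mutations with single $\mc{LM}_n$ mutations). Your expanded bookkeeping of the exchange-polynomial correspondence is exactly the content the paper delegates to that citation.
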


\begin{proof}
Using Lemma \ref{symm}, the proof is similar to \cite[Proposition 4.4]{lp}.
\end{proof}

In light of Lemma \ref{isom}, we may understand the clusters in $\mc{LM}_{n}$ by forming ``double-cover'' clusters in $\mc{CM}_{n}$. A sequence $\mu$ of mutations in $\mc{LM}_{n}$ corresponds to a sequence $\mu'$ of twice as many mutations in $\mc{CM}_{n}$, where we impose the symmetry restriction, and the cluster variables in $\mc{LM}_{n}$ after applying $\mu$ are the symmetrizations of those in $\mc{CM}_{n}$ after applying $\mu'$.

\begin{lemma} \label{LPpos}
Any cluster $S$ of $\mc{LM}_{n}$ consisting entirely of circular pairs is a positivity test.
\end{lemma}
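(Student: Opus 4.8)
The plan is to reduce the statement to the fact that $\mc{D}_n$ is a positivity test (Corollary \ref{postestcor}) by showing that membership in the clusters of $\mc{LM}_n$ can only move us between positivity tests. Concretely, suppose $S$ is a cluster of $\mc{LM}_n$ all of whose cluster variables are circular pairs (equivalently, circular minors), and let $M$ be a symmetric $n\times n$ matrix with zero row and column sums such that every minor in $S$ is positive. I want to conclude that every circular minor of $M$ is positive, and to do this I will show that every circular minor of $M$ is a \emph{subtraction-free rational expression} in the minors of $S$, exactly as Proposition \ref{initialseed} does for $S = \mc{D}_n$. Since $\mc{LM}_n$ has the Laurent phenomenon (by \cite{lp}), each minor in $\mc{D}_n$ is a Laurent polynomial in the variables of $S$; the real content is that this Laurent polynomial has positive coefficients, so that positivity of the $S$-minors forces positivity of the $\mc{D}_n$-minors, and then Corollary \ref{postestcor} finishes the job.

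The key steps, in order, are as follows. First, I would pass to the double cover $\mc{CM}_n$: by Lemma \ref{isom}, the cluster $S$ of $\mc{LM}_n$ is the symmetrization $C(S')$ of a cluster $S'$ of $\mc{CM}_n$ obtained by a symmetry-restricted mutation sequence, and each element of $S$ is a non-symmetric circular minor of the generic non-symmetric matrix $M'$. Second, I would invoke positivity of the Laurent phenomenon in an honest cluster algebra: in $\mc{CM}_n$, every cluster variable appearing in the initial seed $V'_n$ — in particular every non-symmetric diametric minor $(P;Q)'$ — is a subtraction-free rational (indeed Laurent) expression in the variables of $S'$, because each mutation step $x \mapsto (F_x^+ + F_x^-)/x$ is subtraction-free and the exchange polynomials of $\mc{CM}_n$ are the binomials coming from \ref{P1} (hence have positive coefficients). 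Third, I would apply the symmetrizing homomorphism $C$: since $C$ is a ring homomorphism sending the $S'$-variables to the $S$-variables and the $V'_n$-variables to the corresponding elements of $\mc{D}_n \cup \{\epair\}$, it carries a subtraction-free expression for $(P;Q)'$ in terms of $S'$ to a subtraction-free expression for the diametric minor $(P;Q)$ in terms of $S$. Hence every element of $\mc{D}_n$ is a positive rational expression in the elements of $S$. Finally, since $\mc{D}_n$ is a positivity test (Corollary \ref{postestcor}) and positivity of all $S$-minors implies positivity of all $\mc{D}_n$-minors, positivity of all $S$-minors implies positivity of all circular minors of $M$, i.e. $S$ is a positivity test.

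The main obstacle I anticipate is justifying cleanly that the passage through $\mc{CM}_n$ and back through $C$ actually produces a well-defined subtraction-free expression for each diametric minor in terms of $S$, rather than merely an identity that holds generically. Two points need care: (i) one must know that $S'$ and $S$ genuinely consist of the claimed minors of $M'$ and $M$ — this is where Lemma \ref{isom} and the identification of initial cluster variables with circular minors are used, but one should check that no cluster variable of $\mc{CM}_n$ reachable by a symmetry-restricted sequence vanishes identically or fails to be a minor, so that division is legitimate; and (ii) one must make sure the symmetry restriction was available for the given mutation sequence — but this is exactly the content of Lemma \ref{isom}, which asserts that the $\mc{LM}_n$-cluster $S$ arises precisely as $C(S')$ for a symmetry-restricted $S'$. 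A secondary, more bookkeeping-level issue is that some variables in $V_n$ (the maximal and limiting ones) are frozen; since frozen variables never leave the cluster, they are automatically among the $S$-variables, and the same subtraction-free argument applies verbatim. Modulo these checks, the argument is a short chain: Laurent-with-positive-coefficients in $\mc{CM}_n$, push forward by $C$, then apply Corollary \ref{postestcor}.
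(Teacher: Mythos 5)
Your proposal is correct and follows essentially the same route as the paper: pass to the double cover $\mc{CM}_n$ via Lemma \ref{isom}, use that the exchange relations are subtraction-free so every diametric minor is a positive (subtraction-free) rational expression in the variables of the given cluster, symmetrize, and conclude with Corollary \ref{postestcor}. The extra care you flag about well-definedness and frozen variables is reasonable but not needed beyond what the paper's short argument already uses.
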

\begin{proof}
In $\mc{CM}_n$, the exchange polynomial has only positive coefficients, so each variable in any cluster is a rational function with positive coefficients in the variables of any other cluster. In particular, each non-symmetric circular pair in $V_n-\epair'$ is a rational function with positive coefficients in the variables of any cluster reachable under the symmetry restriction. Hence, by Lemma \ref{isom}, each circular pair in $\mc{D}_n$ can be written as a rational function with positive coefficients of the variables in $S$. The desired result follows easily.
\end{proof}

As with double wiring diagrams for totally positive matrices \cite{totalpos}, and plabic graphs for the totally nonnegative Grassmannian \cite{sasha}, we now restricting ourselves to certain types of mutations in $\mc{LM}_n$. A natural choice is mutations with exchange relations of the form \ref{P1} or \ref{P2}. These mutations keep us within clusters consisting entirely of circular minors, the ``Pl\"{u}cker clusters.''

We begin by restricting ourselves only to mutations with exchange relations of the form \ref{P1}. Because the initial seed $\mc{S}_{n}$ consists only of solid circular pairs, we will only be able to mutate to other clusters consisting entirely of solid circular pairs. Our goal is to chracterize these clusters. We will be able to write down such a characterization using Corollary \ref{symallconsec} and Lemma \ref{isom}, and give a more elegant description of the clusters in Proposition \ref{wsconsec}.

\begin{defn}
Let $(P;Q)' = (p_1, \ldots, p_k; q_1, \ldots, q_k)'$ be a non-symmetric, non-empty circular pair. Define the statistics $D(P;Q)'$, $T(P;Q)'$, and $k(P;Q)'$ by:
\begin{align*}
D(P;Q)' &= d_1(P;Q)' - d_2(P;Q)' = d(p_k,q_k) - d(q_1,p_1)\\
T(P;Q)' &= \begin{cases}
\frac{p_1 + q_1}{2} \pmod{n} &\text{if } p_1 < q_1 \\
\frac{p_1 + q_1 + n}{2} \pmod{n} &\text{if } p_1 > q_1
\end{cases}\\
k(P;Q)' &= |P|\text{, that is, the size of }(P;Q)'
\end{align*}
\end{defn}

\begin{rmk}
A non-symmetric solid circular pair $(P;Q)'$ is uniquely determined by the triple $(D(P;Q)', T(P;Q)', k(P;Q)')$. A necessary condition for a triple $(D,T,k)$ to correspond to a non-symmetric solid circular pair is that $|D| + 2k \leq n$. When the terms are non-symmetric solid circular pairs, (\ref{P1}) can be written using these triples as:
\begin{equation}\label{P1t}
(D-2, T, k)(D+2, T, k)=(D,T-1/2 ,k)(D,T+1/2 ,k) + (D,T,k+1)(D,T,k-1).
\end{equation}
\end{rmk}

\begin{figure}[p]
\begin{center}
\includegraphics[scale=0.7]{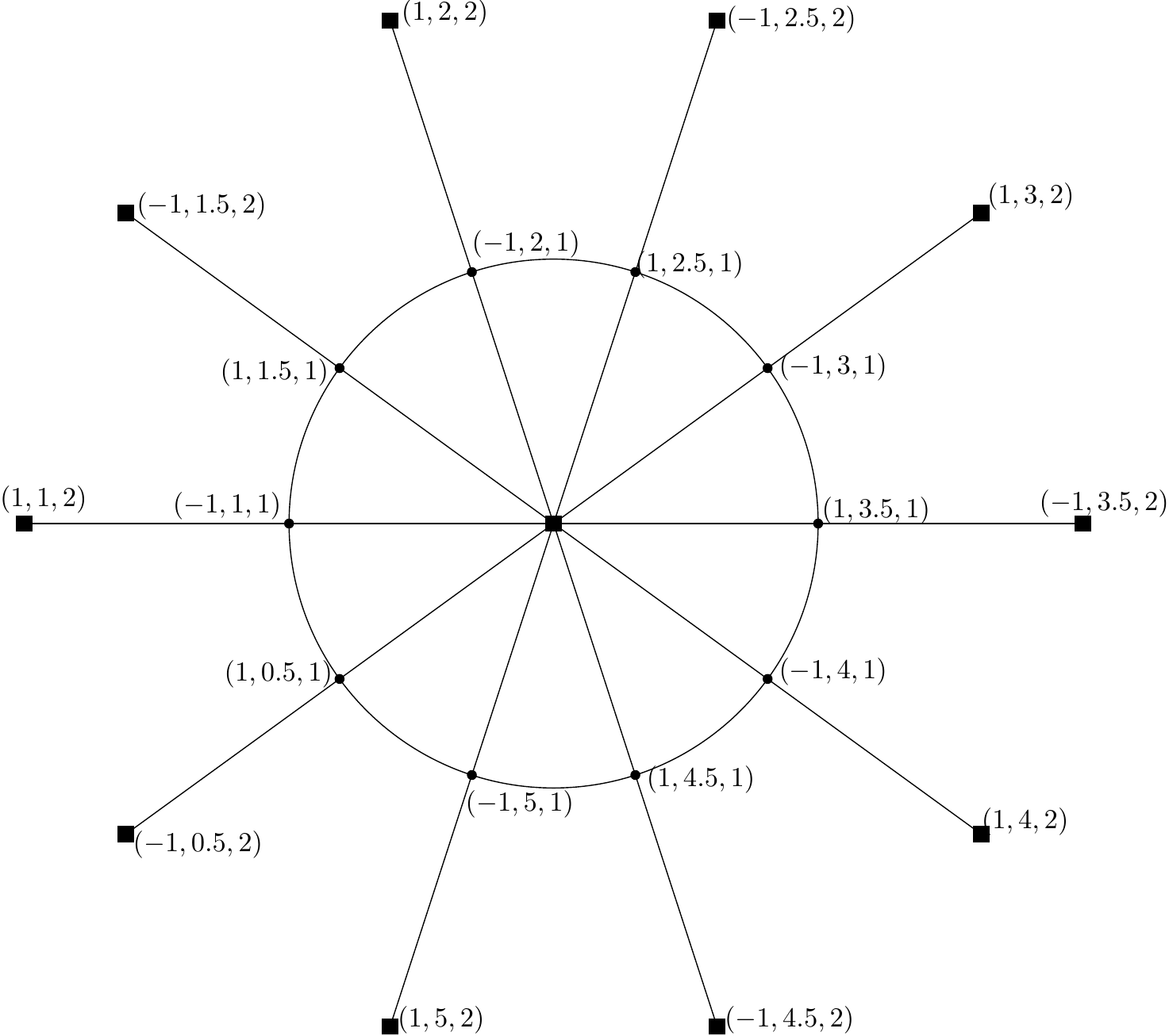}
\caption{The graph $U'_5$ with non-symmetric solid circular pairs labeled by triples $(D,T,k)$. In $\mc{Q}_{n}$, the edges alternate directions around each non-frozen vertex. Compare to Figure \ref{Qp}.}
\label{blues}
\vspace{0.4in}
\includegraphics[scale=0.7]{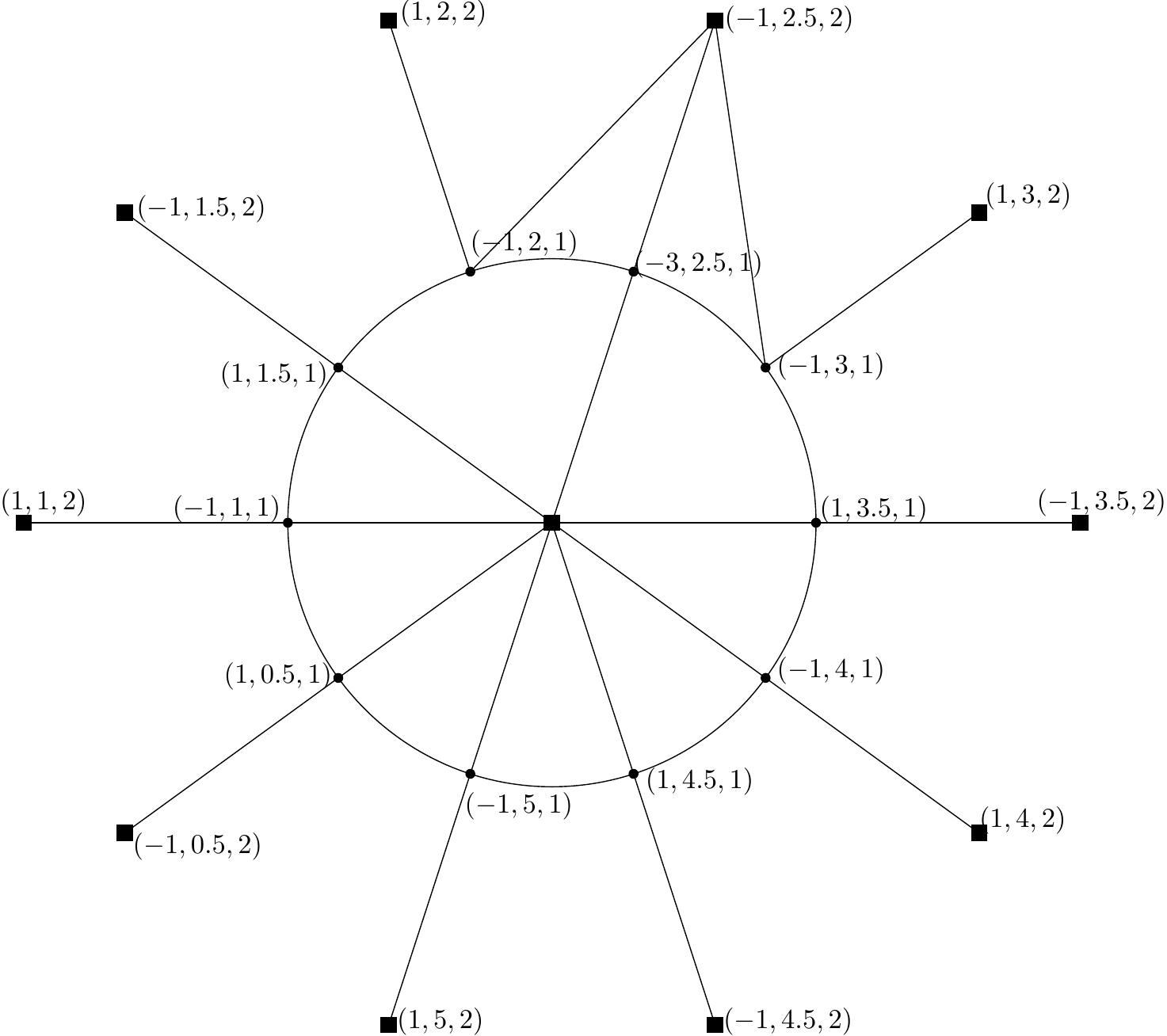}
\caption{The graph $U'_5$ after a mutation at $(1, 2.5, 1)$, with non-symmetric solid circular pairs labeled by triples $(D,T,k)$. In the quiver, the edges alternate directions around each non-frozen vertex.}
\label{bluesmutate}
\end{center}
\end{figure}

\begin{definition}
We call two non-symmetric solid circular pairs corresponding to the triples $(D_1,T_1,k_1)$ and $(D_2,T_2,k_2)$ \textbf{adjacent} if $T_1 = T_2$ and $|k_1 - k_2|= 1$, or $k_1 = k_2$ and $T_1 - T_2 \equiv \pm 1/2\pmod{n}$. We call $(P;Q)'$ and $(R;S)'$ \textbf{diagonally adjacent} if there are two non-symmetric solid circular pairs $(A;B)', (C;D)'$ which are both adjacent to both $(P;Q)'$ and $(R;S)'$. We call $(A;B)', (C;D)'$ the \textbf{connection} of $(P;Q)', (R;S)'$.
\end{definition}

Note that, in the initial quiver $\mc{Q}_{n}$, adjacent and diagonally adjacent circular pairs correspond to vertices which are adjacent in particular ways. Specifically, adjacent circular pairs correspond to vertices which are adjacent on the same concentric circle, or along the same radial spoke of $U'_{n}$. Diagonally adjacent circular pairs correspond to those which are adjacent via all other edges, the ``diagonal'' edges. We can now classify clusters of $\mc{CM}_{n}$ which can be reached only using the mutations with exchange relation (\ref{P1}).

\begin{definition} \label{solidcluster}
We call a set $S$ of $2 \binom{n}{2}+1$ non-symmetric solid circular pairs a \textbf{solid cluster} if it has the following properties:
\begin{itemize}
\item $\epair' \in S$,
\item for each integer $1 \leq k \leq \frac{n}{2}$, and each $T \in \{ 0.5, 1, 1.5, 2, \ldots, n \}$, unless $k=\frac{n}{2}$ and $T$ is an integer, there is a $D$ such that the non-symmetric solid circular pair corresponding to $(D,T,k)$ is in $S$, and
\item if $(P;Q)', (R;S)' \in S$ and $(P;Q)'$ is adjacent to $(R;S)'$, then $|D(P;Q)' - D(R;S)'| = 2$.
\end{itemize}
\end{definition}

\begin{remark} \label{embed}
There is a natural embedding of a solid cluster $S$ in the plane, similar to our embedding of $U'_n$. We place $\epair'$ at any point, and then pairs of size $k$ on the circle of radius $k$ centered at that point. Moreover, we place adjacent pairs of the same size consecutively around each circle, and adjacent pairs of different sizes collinear with $\epair'$.
\end{remark}

\begin{definition}
For a solid cluster $S$ of $\mc{CM}_{n}$ and associated quiver $\mc{B}$, we call $(S,B)$ a \textbf{solid seed} if it has the following properties:
\begin{itemize}
\item vertices corresponding to maximal non-symmetric solid circular pairs are frozen,
\item there is an edge between any pair of adjacent vertices that are not both frozen,
\item there is an edge between diagonally adjacent vertices $(P;Q)', (R;S)'$ if their connection $(A;B)', (F;G)'$ satisfies $|D(A;B)' - D(F;G)'| = 4$, 
\item there is an edge from a size 1 vertex $(P;Q)'$ to $\epair'$ if it would make the degree of $(P;Q)'$ even,
\item all edges of $\mc{B}$ are in drawn in one of the four ways described above, and
\item all edges are oriented so that, in the embedding described in Remark \ref{embed}, edges alternate between in- and out-edges around any non-frozen vertex.
\end{itemize}

If, furthermore, $s \in S$ if and only if $C(s) \in S$, or equivalently, the non-symmetric solid circular pair corresponding to $(D,T,k)$ is in $S$ if and only if that corresponding to $(-D,T,k)$ is, then we call $(S,b)$ a \textbf{symmetric solid seed}.
\end{definition}

See, for example, Figure \ref{bluesmutate}.

\begin{remark} \label{canmut}
In a solid seed $(S,\mc{B})$, a variable $(P;Q)' \in S$ has an exchange polynomial of the form (\ref{P1}) whenever its corresponding vertex in $\mc{B}$, and has edges to the vertices corresponding to its four adjacent variables in $\mc{B}$, and no other vertices.
\end{remark}

\begin{lemma} \label{allconsec}
In $\mc{CM}_n$, from the initial seed with cluster $V'_{n}$ and quiver $\mc{Q}'_n$, mutations of the form (\ref{P1}) may be applied to obtain the seed $(W'_n, \mc{R}'_n)$ if and only if $(W'_n, \mc{R}'_n)$ is a solid seed. Here, we do \emph{not} impose the symmetry restriction.
\end{lemma}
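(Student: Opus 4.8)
The statement is an "if and only if" characterizing which seeds of $\mc{CM}_n$ are reachable from the initial seed $(V'_n,\mc{Q}'_n)$ using only mutations whose exchange relation has the form (\ref{P1}). The plan is to prove both directions by an induction on the length of the mutation sequence, with the solid-seed axioms serving as the loop invariant. The forward direction ("reachable $\Rightarrow$ solid seed") is the invariance statement: I would first check that $(V'_n,\mc{Q}'_n)$ is a solid seed — this is essentially the construction of $U'_n$ unwound in the language of triples $(D,T,k)$ and Definition \ref{solidcluster}. Then, assuming $(S,\mc{B})$ is a solid seed and $(P;Q)'$ is a vertex at which an (\ref{P1})-mutation is legal, I must show the mutated seed $\mu_{(P;Q)'}(S,\mc{B})$ is again a solid seed. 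By Remark \ref{canmut}, such a $(P;Q)'$ corresponds to the triple $(D,T,k)$ and has exactly four neighbors, its adjacent variables $(D,T,k\pm 1)$ and $(D,T\pm 1/2,k)$, with alternating orientations; the mutation replaces $(D,T,k)$ by the triple dictated by (\ref{P1t}), namely it flips $D$ to the "other" value with $|D_{\text{new}}-D_{\text{old}}|$ making both adjacent-difference conditions hold — concretely, the new variable is $(D',T,k)$ where $D'$ is forced by the third bullet of Definition \ref{solidcluster}. One then verifies directly that (i) the cluster still meets the "one pair per $(T,k)$" condition (the mutation swaps one triple for another with the same $(T,k)$), (ii) all adjacency-difference conditions are restored (this is where one checks the four neighbors' $D$-values against the new $D'$, using that $|D_{\text{old}}-D_{\text{nbr}}|=2$ forced $D_{\text{nbr}}$ to be one of $D_{\text{old}}\pm 2$, hence $D'$ differs from each neighbor by $0$ or $4$, matching the diagonal-edge rule), and (iii) the quiver changes under mutation agree with the solid-seed edge rules — this last point is the bookkeeping heart of the argument: a standard quiver mutation at a degree-4 alternating vertex reverses its four edges and toggles the edges on the "square" of its neighbors, and one must see that this precisely converts the diagonal-edge pattern of $(S,\mc{B})$ into that of $\mu(S,\mc{B})$, including the special rule about the edge to $\epair'$ for size-1 vertices and the frozen maximal vertices.

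For the reverse direction ("solid seed $\Rightarrow$ reachable"), I would argue by induction on a suitable non-negative "distance" statistic measuring how far a given solid seed is from the initial one — for instance, $\sum |D(P;Q)'|$ taken over all $(P;Q)'\in S$, or the number of triples whose $D$-value differs from its initial value. If this statistic is minimal, $S$ is forced to be the initial seed (the initial seed is the unique solid cluster in which every non-frozen triple takes the $D$-value closest to $0$ allowed by $|D|\equiv n\pmod 2$). Otherwise I need to find a vertex $(P;Q)'\in S$ at which an (\ref{P1})-mutation is legal and which strictly decreases the statistic; mutating there and invoking the inductive hypothesis finishes. To locate such a vertex, I would look at a triple with $|D|$ maximal among non-frozen triples and argue, using the third axiom of Definition \ref{solidcluster} together with the edge rules of a solid seed, that its corresponding vertex has exactly its four adjacent neighbors as quiver-neighbors with alternating orientation (so Remark \ref{canmut} applies), and that the mutation decreases $|D|$ there. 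Here one uses that a locally-$|D|$-maximal non-frozen triple cannot have any diagonal edges of the "$|D(A;B)'-D(F;G)'|=4$" type emanating in the direction that would increase degree, essentially because its neighbors all have strictly smaller $|D|$.

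I would then remark that the analogous statement with the symmetry restriction imposed follows by the same induction carried out on symmetric solid seeds: one mutates at $(P;Q)'$ and then immediately at $c((P;Q)')=$ the triple $(-D,T,k)$, which by Lemma \ref{symm} and the disjointness of the two vertices' neighborhoods makes the symmetric change; this is what will later let us transport the classification to $\mc{LM}_n$ via Lemma \ref{isom}, though strictly that is outside the present lemma.

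The main obstacle I anticipate is step (iii) of the forward direction: verifying that ordinary quiver mutation at the degree-4 alternating vertex sends the precise edge-configuration of one solid seed to that of another. The diagonal edges are governed by a subtle rule ($|D(A;B)'-D(F;G)'|=4$ on the connection), and quiver mutation creates/destroys edges among the square of neighbors in a way that one must match against the new $D$-values of the neighbors' connections after the cluster variable at the center has moved. Getting this correspondence exactly right — including the boundary cases at the frozen maximal vertices, at $\epair'$, and at the limiting circular pairs where the adjacency rule for $U'_n$ differs from that of $U_n$ — is the delicate bookkeeping that carries the proof, and I would organize it as a case analysis on the position $(D,T,k)$ of the mutated vertex (interior, adjacent to a maximal/frozen layer, size $1$, etc.).
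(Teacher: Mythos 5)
Your forward direction is essentially the paper's argument (check that the initial seed is solid, and that a mutation of the form (\ref{P1t}) fixes $T$ and $k$ and moves $D$ by $4$, preserving the conditions of Definition \ref{solidcluster}); the paper dispatches the quiver bookkeeping you single out very briefly, so your extra care there is fine and not a point of divergence.

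The genuine gap is the base case of your reverse direction. You claim that a solid seed minimizing your statistic (e.g.\ $\sum|D|$) ``is forced to be the initial seed.'' That is false: the bottom level of the statistic contains many distinct solid seeds. Concretely, for $n=4$ the initial cluster has the variable with triple $(-2,2.5,1)$ (the pair $(2;3)'$), whose four neighbors are $(0,2,1)$, $(0,3,1)$, $(0,2.5,2)$ and $\epair'$, all with $D=0$; replacing it by $(+2,2.5,1)$ (the pair $(1;4)'$) again satisfies every condition of Definition \ref{solidcluster}, giving a different solid seed with the same value of $\sum|D|$. In general, at any vertex with $|D|=2$ all of whose neighbors have $D=0$, the sign of $D$ may be flipped freely. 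Such a flipped seed is reachable (the flip is itself a legal (\ref{P1}) mutation, by Remark \ref{canmut}), yet your induction can never certify it: it sits at the minimum of the statistic without being the initial seed, and from it no (\ref{P1}) mutation strictly decreases $\sum|D|$ (a legal mutation changes one $D$ by $\pm4$, so from $|D|\le 2$ it either increases the statistic or leaves it unchanged). Your alternative statistic, the number of triples whose $D$ differs from its initial value, fares no better, since a mutation at a maximal-$|D|$ vertex need not decrease it.

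The paper's route differs at exactly this point: it never tries to descend from the target to the initial seed. It fixes one canonical seed $(I'_n,\mc{QI}'_n)$ (all $|D|\le 2$, symmetric), shows that from an arbitrary solid seed one may repeatedly mutate at a vertex of maximal $|D|>2$ (whose adjacent vertices then all have $|D|-2$) to reach it, applies this reduction both to the initial seed $(V'_n,\mc{Q}'_n)$ and to the target $(W'_n,\mc{R}'_n)$, and concatenates the first sequence with the reversal of the second, using that these mutations are involutions. If you wish to keep your formulation, you must add an argument relating the minimizers of your statistic to one another — for instance, that any two solid seeds with all $|D|\le 2$ are connected by sign flips of the kind described above, each of which is a legal (\ref{P1}) mutation; without such a supplement (which is also the delicate point behind the paper's assertion that a non-canonical solid seed has some $|D|>2$), the induction does not close.
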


\begin{proof}
First, assume that $(W'_n, \mc{R}'_n)$ via mutations of the form (\ref{P1}). First, it is easy to check that $(V'_n, \mc{Q}'_n)$ is a solid seed. Then, our mutations do indeed turn non-symmetric solid circular pairs into other non-symmetric solid circular pairs. Furthermore, when we perform a mutation of the form (\ref{P1t}) at the vertex $v$, the values of $T$ and $k$ do not change, and the value of $D$ changes from being either 2 more than the values of $D$ at the vertices adjacent to $v$ to being 2 less, or vice versa. Hence, the resulting seed is also solid, so, by induction, $(W'_n, \mc{R}'_n)$ is solid.

Conversely, assume $(W'_n, \mc{R}'_n)$ is solid. We begin by noting that, by Remark \ref{canmut}, whenever the four terms on the right hand side of (\ref{P1t}) and one term on the left hand side are in our cluster, then we can perform the corresponding mutation.

Now, define $(I'_n, \mc{QI}'_n)$ to be the unique symmetric solid seed such that, for each $(P;Q)' \in I'_n$, $D(P;Q)' \in \{-2, -1, 0, 1, 2\}$. For any solid seed $(W'_n, \mc{R}'_n)$, we give a mutation sequence $\mu_{(W'_n, \mc{R}'_n)}$ using only mutations of the form (\ref{P1t}) that transforms $(W'_n, \mc{R}'_n)$ into $(I'_n, QI'_n)$. Hence, we will be able to get from the seed $(V'_n, \mc{Q}'_n)$ to $(W'_n, \mc{R}'_n)$ by performing $\mu_{(V'_n, \mc{Q}'_n)}$, followed by $\mu_{(W'_n, \mc{R}'_n)}$ in reverse order.

It is left to construct the desired mutation sequence. We define $\mu_{(W'_n, \mc{R}'_n)}$ as follows: while the current seed is not $(I'_n, \mc{QI}'_n)$, choose a vertex $v$ of the quiver, with associated cluster variable $(P;Q)'$, for which the value of $|D(P;Q)'|$ is maximized. We must have $|D(P;Q)'| > 2$, and by maximality, for each vertex $(R;S)'$ adjacent to $(P;Q)'$, we must have $|D(R;S)'| = |D(P;Q)'| - 2$. Hence, we can mutate at $(P;Q)$ to reduce $|D(P;Q)'|$ by at least 2. This process may be iterated to decrease the sum, over all cluster variables $(P;Q)'$ in our seed, of the $|D(P;Q)'|$, until we reach the seed $(I'_n, \mc{QI}'_n)$. The proof is complete.
\end{proof}

\begin{corollary} \label{symallconsec}
In $\mc{CM}_n$, from our initial seed with cluster $V'_{n}$ and quiver $\mc{Q}'_n$, we can apply symmetric pairs of mutations (that is, mutations with the symmetry restriction) of the form (\ref{P1}) to obtain the seed $(W'_n, \mc{R}'_n)$ if and only if $(W'_n, \mc{R}'_n)$ is a symmetric solid seed.
\end{corollary}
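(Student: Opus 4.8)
The plan is to obtain the corollary from Lemma \ref{allconsec} by carrying the symmetry restriction through the argument, in the spirit of Lemma \ref{symm} and Lemma \ref{isom}. Throughout, recall that $c$ sends a non-symmetric solid circular pair to the one whose triple has the opposite value of the statistic $D$ (and the same $k$), that $c$ is an involution, and that a solid seed is a \emph{symmetric} solid seed precisely when its cluster is stable under $c$; in particular the initial seed $(V'_n,\mc{Q}'_n)$ is a symmetric solid seed.

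For the forward implication, suppose $(W'_n,\mc{R}'_n)$ is obtained from $(V'_n,\mc{Q}'_n)$ by a sequence of symmetric pairs of form-(\ref{P1}) mutations. Forgetting the symmetry restriction, this is in particular a sequence of form-(\ref{P1}) mutations, so Lemma \ref{allconsec} already gives that $(W'_n,\mc{R}'_n)$ is a solid seed, and it remains to check that its cluster is $c$-stable. I would argue by induction on the number of symmetric pairs applied, the base case being the $c$-stability of $(V'_n,\mc{Q}'_n)$. For the inductive step, assume the current seed is a symmetric solid seed and we apply the pair $\mu_v,\mu_{c(v)}$, both of form (\ref{P1}). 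By (\ref{P1t}), the four neighbours of the vertex $v$ all carry the common value $D(v)+\varepsilon$ of $D$ with $\varepsilon=\pm 2$, and $\mu_v$ replaces the cluster variable with triple $(D(v),T,k)$ by the one with triple $(D(v)+2\varepsilon,T,k)$, changing no other value of $D$. By $c$-stability of the current cluster the neighbours of $c(v)$ carry $D$-value $-(D(v)+\varepsilon)$, so $\mu_{c(v)}$ replaces the variable with triple $c\big((D(v),T,k)\big)$ by the one with triple $c\big((D(v)+2\varepsilon,T,k)\big)$. Hence the set of triples present remains $c$-stable, so the new seed is again a symmetric solid seed, completing the induction.

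For the converse, let $(W'_n,\mc{R}'_n)$ be a symmetric solid seed. I would produce a sequence of symmetric pairs of form-(\ref{P1}) mutations carrying $(W'_n,\mc{R}'_n)$ to the canonical seed $(I'_n,\mc{QI}'_n)$ of Lemma \ref{allconsec}; running the analogous construction from $(V'_n,\mc{Q}'_n)$ and composing with the first sequence in reverse then exhibits $(W'_n,\mc{R}'_n)$ as reachable from $(V'_n,\mc{Q}'_n)$ by symmetric pairs. To build this sequence I mimic the reduction in the proof of Lemma \ref{allconsec}: while the current seed is not $(I'_n,\mc{QI}'_n)$, choose a vertex $v$ with $|D(v)|$ maximal and mutate first at $v$, then at $c(v)$. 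Since $|D(v)|>2$ whenever we are not yet at $(I'_n,\mc{QI}'_n)$, we have $D(v)\neq 0$, hence $c(v)\neq v$, and by $c$-stability $|D(c(v))|$ is also maximal. As in Lemma \ref{allconsec}, a vertex of maximal $|D|$ is non-frozen and, as all of its neighbours carry one and the same value of $D$, has edges precisely to its four adjacent vertices, so $\mu_v$ is of form (\ref{P1}); the same holds for $c(v)$ before $\mu_v$ is applied. Moreover $\mu_v$ changes no edge incident to $c(v)$, because $v$ and $c(v)$ are never joined by an edge in a solid seed: a combinatorial edge would force $|D(v)-D(c(v))|=|2D(v)|=2$, contradicting $|D(v)|>2$, while a diagonal edge would force their connection pair — whose members are adjacent to $v$, hence all carry the single value $D(v)\mp 2$ of $D$ — to differ in $D$ by $4$. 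Therefore $\mu_{c(v)}$ is again a legal form-(\ref{P1}) mutation; the pair $\mu_v,\mu_{c(v)}$ keeps us at a symmetric solid seed by the computation in the forward part, and strictly decreases $\sum_{(P;Q)'\in S}|D(P;Q)'|$, so iterating terminates at $(I'_n,\mc{QI}'_n)$.

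The step I expect to be the main obstacle is this legality bookkeeping in the converse: confirming that after mutating at a maximal-$|D|$ vertex $v$ the mirror vertex $c(v)$ is still a non-frozen form-(\ref{P1}) vertex of maximal $|D|$, which in turn rests on the non-adjacency of $v$ and $c(v)$ in solid seeds and on the exact edge rules in the definition of a solid seed (including the behaviour at size-$1$ vertices, where $\epair$ plays the role of a neighbour). Once these are in hand, the rest is a routine combination of Lemma \ref{allconsec}, Lemma \ref{symm}, and the $(D,T,k)\mapsto(-D,\,\cdot\,,k)$ description of $c$.
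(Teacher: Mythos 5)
Your proof is correct and follows essentially the same route as the paper: the paper's own argument simply reruns the reduction from Lemma \ref{allconsec}, observing that in a symmetric solid seed a vertex has maximal $|D|$ exactly when its $c$-image does, so the mutation sequence to $(I'_n,\mc{QI}'_n)$ can be chosen in symmetric pairs. Your additional bookkeeping (the $c$-stability induction for the forward direction and the non-adjacency of $v$ and $c(v)$ guaranteeing that $\mu_{c(v)}$ remains a legal form-(\ref{P1}) mutation, in the spirit of Lemma \ref{symm}) just makes explicit what the paper leaves implicit.
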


\begin{proof}
The proof is almost identical to that of Lemma \ref{allconsec}. It suffices to note that in a symmetric solid seed, $(P;Q)'$ has a maximal value of $|D|$ if and only if $c(P;Q)'$ does, so the mutation sequence $\mu_{(W'_n, \mc{R}'_n)}$ can be selected to obey the symmetry restriction.
\end{proof}

Using Corollary \ref{symallconsec} and Lemma \ref{isom}, we could also prove a similar result for the LP algebra $\mc{LM}_n$. However, we will wait to do so until Lemma \ref{wsconsec} where we will describe it more elegantly using the notion of weak separation.

We have now have the required machinery to prove our main theorem of this section.

\begin{theorem}\label{lpalg}
Fix a symmetric $n\times n$ matrix $M$ of distinct indeterminates. Then, the $\mathbb{C}$-Laurent phenomenon algebra $\mc{LM}_{n}$ is isomorphic to the polynomial ring (over $\mathbb{C}$) on the $\binom{n}{2}$ non-diagonal entries of $M$.
\end{theorem}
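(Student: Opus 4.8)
The plan is to show that $\mc{LM}_{n}$, which a priori is an LP algebra generated by the cluster variables appearing in its (countably many) seeds inside the field of rational functions on the $\binom{n}{2}$ non-diagonal entries of $M$, is in fact equal to the polynomial ring $R = \mathbb{C}[M_{ij} : i \neq j]$ sitting inside that rational function field. One containment is the genuinely substantive one. By the Laurent phenomenon for LP algebras (Lam--Pylyavskyy \cite{lp}), every cluster variable of $\mc{LM}_{n}$ is a Laurent polynomial in the variables of the initial seed $\mc{S}_{n}$, i.e.\ a Laurent polynomial in the minors of $\mc{D}_{n}$. On the other hand, Proposition \ref{initialseed} says each minor of $\mc{D}_{n}$ — in particular each entry $M_{ij}$, which is the $1\times1$ minor $\Delta^{\emptyset,\emptyset}$-type circular pair $(i;j)$ — is a positive rational expression in the elements of $\mc{D}_{n}$; conversely, running the Grassmann--Pl\"ucker substitutions backwards (or simply expanding determinants) each element of $\mc{D}_{n}$ is a polynomial in the $M_{ij}$. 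So first I would pin down that $\mc{D}_{n} \subset R$ and $R$ is generated (as a ring) by $\mc{D}_{n}$ together with the inverses needed — except the point is precisely that no inverses are needed.

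The key step is to show that each cluster variable of $\mc{LM}_{n}$ actually lies in $R$, not merely in the localization of $R$ at the product of the initial minors. Here I would use the structure established in Section 3: by Lemma \ref{isom} every cluster of $\mc{LM}_{n}$ is the symmetrization $C(L')$ of a cluster $L'$ of the cluster algebra $\mc{CM}_{n}$ reached under the symmetry restriction, and $\mc{CM}_{n}$ is an ordinary cluster algebra on the non-symmetric minors of the generic $n\times n$ matrix $M'$. Cluster variables of $\mc{CM}_{n}$ that are minors of $M'$ are manifestly polynomials in the entries of $M'$, so their symmetrizations are polynomials in the $M_{ij}$; for the remaining cluster variables I would invoke the Laurent phenomenon in $\mc{CM}_{n}$ together with the standard fact that a Laurent polynomial in one cluster which is simultaneously a Laurent polynomial in every cluster, and whose denominators across all the ``Pl\"ucker'' clusters are products of honest polynomial minors, must in fact be a polynomial — i.e.\ I would intersect the Laurent rings over enough clusters (the solid seeds of Lemma \ref{allconsec} / Corollary \ref{symallconsec} already give clusters covering every $1\times1$ minor with unit coefficient) to conclude that no minor $M_{ij}$ can divide the denominator. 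Thus every cluster variable is integral-denominator-free, hence lies in $R$, giving $\mc{LM}_{n} \subseteq R$.

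For the reverse containment $R \subseteq \mc{LM}_{n}$, it suffices to exhibit each generator $M_{ij}$ as an element of $\mc{LM}_{n}$. The circular pair $(i;j)$ for $d(i,j) \in \{1, \dots\}$ close to $n/2$ is itself in $\mc{D}_{n}$, hence is an initial cluster variable; for the remaining $1\times1$ minors $M_{ij}$ with $(i;j) \notin \mc{D}_{n}$, Proposition \ref{initialseed}'s inductive argument realizes $(i;j)$ via a sequence of Grassmann--Pl\"ucker exchanges, each of which is (by Definition \ref{deflmn} and Remark \ref{canmut}) a legitimate mutation in $\mc{LM}_{n}$ of exchange type (\ref{P1}) or (\ref{P2}); so $M_{ij}$ appears as a cluster variable in some seed of $\mc{LM}_{n}$ and therefore lies in the algebra. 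Combined with the first part, $\mc{LM}_{n} = R \cong \mathbb{C}[x_1, \dots, x_{\binom{n}{2}}]$.

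The main obstacle I anticipate is the denominator-clearing argument in the second paragraph: making rigorous that a cluster variable, a priori only a Laurent polynomial in $\mc{D}_{n}$, has trivial denominator. The clean way is to show that for every minor $m \in \mc{D}_{n}$ there is some seed of $\mc{LM}_{n}$ containing $m$ in which the given cluster variable is still a Laurent polynomial but $m$ does not divide its denominator — concretely, produce a sequence of (\ref{P1})-type mutations fixing the variable in question while moving $m$ out of the cluster, using Lemma \ref{allconsec} to control which seeds are reachable. One must also handle the LP-vs-cluster-algebra bookkeeping carefully, since $\mc{LM}_{n}$ is an LP algebra and the cleanest denominator control comes from its cluster-algebra double cover $\mc{CM}_{n}$ via Lemma \ref{isom} and Lemma \ref{symm}; verifying that the symmetrizing homomorphism $C$ does not introduce spurious cancellations or lose the integrality is the delicate point.
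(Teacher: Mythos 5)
Your overall two-containment strategy is sound, and your reverse containment ($R\subseteq\mc{LM}_n$ via the fact that $1\times1$ minors are solid, hence appear in clusters by Corollary \ref{symallconsec} and Lemma \ref{isom}) is exactly what the paper does; the aside that the Grassmann--Pl\"ucker substitutions of Proposition \ref{initialseed} are themselves legitimate mutations is not justified (exchange polynomials in a given seed are fixed), but it is not needed. The genuine gap is in your denominator-clearing step. First, the ``standard fact'' as you state it is false: if an irreducible element lies in \emph{every} cluster (as the frozen maximal minors and, as explained next, the limiting minors do), then being a Laurent polynomial in every cluster with polynomial denominators does not preclude that element from dividing the denominator (think of $1/x$ with $x$ in every cluster). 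The correct form of the argument requires, for each irreducible $m$ that could appear in the denominator, a seed whose variables are polynomials none of which is associate to $m$ --- and your proposed mechanism for producing such seeds, ``(\ref{P1})-type mutations moving $m$ out of the cluster,'' cannot work for the limiting minors. Mutation of type (\ref{P1}) never removes a limiting minor (its exchange polynomial is not of that form), and in fact every symmetric solid seed contains the limiting minors: already for $n=4$, the pair $(1;2)$ is the \emph{only} solid pair with its value of $(T,k)$, so by the characterization in Definition \ref{solidcluster} it lies in every solid cluster. Thus no cluster reachable by Lemma \ref{allconsec}/Corollary \ref{symallconsec} avoids a limiting minor, and to get any cluster avoiding it you must mutate at the limiting minor itself --- at which point you must show that the resulting new variable is a polynomial in the entries of $M$ and coprime to the limiting minor. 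That is precisely the content of the paper's Appendix \ref{nonplucker} (Proposition \ref{mutlimit}), an explicit computation which your outline neither performs nor replaces; your approach therefore does not bypass the hard step, it silently presupposes it. The frozen maximal minors require a separate remark as well (they are in every cluster, so they must be handled either by coprimality with their ``would-be'' exchanges or by placing them in the coefficient ring so that the LP Laurent phenomenon never inverts them); your proposal does not address this.

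For comparison, the paper's actual route is shorter and avoids intersecting Laurent rings over many seeds: it applies the LP-algebra analogue of the starfish-type criterion \cite[Proposition 3.6]{fominpasha} directly to the initial seed. The hypotheses to check are that the initial cluster variables are irreducible and pairwise coprime (standard for minors of a matrix of indeterminates), and that each non-frozen initial variable is coprime to the variable obtained by the single mutation at its vertex, with that new variable lying in the polynomial ring. For non-limiting minors the mutation is again a minor via (\ref{P1}), and for limiting minors this is exactly Proposition \ref{mutlimit}. If you want to keep your ``avoid $m$ by another cluster'' formulation, you would still need an Appendix-B-type computation for the clusters obtained by mutating at limiting minors, plus an explicit treatment of the frozen variables, so I would recommend reorganizing the argument around the one-step coprimality criterion instead.
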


\begin{proof}
We may directly apply the analogue of \cite[Proposition 3.6]{fominpasha} for LP algebras (for which the proof is identical) to $\mc{LM}_n$ as defined in Definition \ref{deflmn}. It is well-known that minors of a matrix of indeterminates are irreducible, so we immediately have that all of our seed variables are pairwise coprime. We also need to check that each initial seed variable is is coprime to the variable obtained by mutating its associated vertex in $\mc{Q}_{n}$, and that this new variable is in the polynomial ring generated by the non-diagonal entries of $M$. For non-limiting (and non-frozen) minors, this is clear, because each such mutation replaces a minor with another minor via (\ref{P1}). For limiting minors, this is not the case, and we defer the proof to Appendix \ref{nonplucker}.

It remains to check, then that each of the $n(n-1)$ non-diagonal entries of $M'$ appear as cluster variables in some cluster of $\mc{CM}_{n}$. However, because $1\times1$ minors are solid, the result is immediate from Lemma \ref{symallconsec} and Lemma \ref{isom}.
\end{proof}

Because all $1\times1$ minors are solid, mutations of the form (\ref{P1}) were sufficient to establish Theorem \ref{lpalg}. Once we allow mutations of the form (\ref{P2}), the clusters become more difficult to describe. However, let us propose the following conjecture, which has been established computationally for $n\le6$.

\begin{conjecture} \label{getall}
Every cluster of $\mc{LM}_n$ consisting entirely of circular pairs can be reached from the initial cluster using only mutations of the form (\ref{P1}) or (\ref{P2}).
\end{conjecture}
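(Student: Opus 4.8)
The plan is to lift the problem to the cluster algebra $\mc{CM}_n$ and run a ``decreasing-complexity'' induction in the spirit of the proofs of Proposition \ref{initialseed} and Lemma \ref{allconsec}. By Lemma \ref{isom}, a cluster $C$ of $\mc{LM}_n$ consisting entirely of circular pairs pulls back to a symmetry-restricted cluster $C'$ of $\mc{CM}_n$ whose non-frozen variables are non-symmetric circular pairs, and a mutation of $C$ of the form (\ref{P1}) or (\ref{P2}) corresponds to a symmetric pair of mutations of $C'$ of the same form. So it suffices to show that every symmetric cluster of $\mc{CM}_n$ all of whose variables are non-symmetric circular pairs is reachable from the initial quiver $\mc{Q}_n$ by symmetry-restricted mutations of the form (\ref{P1}) or (\ref{P2}).

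The first step would be to record the constraints a Plücker cluster must satisfy. Using the Grassmann--Pl\"{u}cker relations of Proposition \ref{gp} together with the positivity content of Theorem \ref{circularminorspositive}, one checks the easy direction: any cluster of circular pairs is pairwise weakly separated; since a cluster has exactly $\binom{n}{2}$ non-frozen variables, it is in fact a \emph{maximal} pairwise weakly separated collection of circular pairs. This reduces the theorem to two statements: (i) every maximal weakly separated collection is genuinely realized as a Pl\"{u}cker cluster of $\mc{LM}_n$, and (ii) the ``mutation graph'' whose vertices are maximal weakly separated collections of circular pairs and whose edges are the square moves (\ref{P1}) and hexagon moves (\ref{P2}) is connected. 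Statement (i) is essentially Conjecture \ref{wsconj}; granting it, the real work is (ii).

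For (ii) I would attach to each circular pair the statistic $\Phi$ of the proof of Proposition \ref{initialseed} together with $|d_1 - d_2|$ (equivalently, on the double cover, the pair $(|D|,\Phi)$ ordered lexicographically), so that $\mc{D}_n$ is precisely the set of minimizers, and to a collection $\mathcal{C}$ the sum of these statistics over its members. Given $\mathcal{C}$ not equal to the initial collection, one locates a member of maximal complexity and, mimicking the maximal-$|D|$ argument of Lemma \ref{allconsec} but now permitting a hexagon relation (\ref{P2}) when the relevant pair is non-solid, produces a mutation of type (\ref{P1}) or (\ref{P2}), available in the current seed, that strictly decreases the total statistic and lands in another maximal weakly separated collection. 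Iterating reaches $\mc{D}_n$, and reversing the mutation sequence proves the theorem; the purely solid sub-case is already supplied by Corollary \ref{symallconsec}, so the genuinely new content is the interface between solid and non-solid circular pairs and the role of (\ref{P2}).

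The main obstacle is the step guaranteeing that the complexity-decreasing move is both of Pl\"{u}cker type and actually available in the current LP seed, and that its output is again a maximal weakly separated collection of circular pairs --- i.e. controlling the exchange polynomials of $\mc{LM}_n$ along the mutation path so that the seed variables remain circular minors. This is exactly the difficulty that keeps Conjecture \ref{wsconj} open: one needs a structural description of maximal weakly separated collections of circular pairs --- an electrical analogue of the Oh--Speyer--Postnikov \cite{osp} and Scott \cite{scott} results --- from which a square or hexagon move can be read off near the top-complexity pair, together with the fact that maximality is preserved by the move. For $n \le 6$ this is verified by exhaustive computation, but the general case requires this missing structure theory, which is the gap.
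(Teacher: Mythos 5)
The statement you are asked about is a \emph{conjecture} in the paper: the authors offer no proof of Conjecture \ref{getall}, only computational verification for $n\le 6$, and your proposal does not close it either --- you say so yourself in your final paragraph, where the key connectivity/realizability step is left as ``missing structure theory.'' So there is nothing to compare against on the paper's side, and what you have written is an outline of a possible attack, not a proof. That is a legitimate thing to produce for an open problem, but it should be judged as such.

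Beyond the acknowledged gap, two steps you present as easy are not actually available. First, your ``easy direction'' --- that \emph{any} cluster of $\mc{LM}_n$ consisting of circular pairs is pairwise weakly separated (hence maximal, by Proposition \ref{bound}) --- is not proved in the paper. Proposition \ref{pluckerpreservesws} only shows that weak separation is preserved along mutations of the form (\ref{P1}) or (\ref{P2}) starting from $\mc{D}_n$; it therefore applies precisely to the clusters already known to be reachable by such mutations, whereas Conjecture \ref{getall} concerns arbitrary Pl\"{u}cker clusters, which may a priori arise via mutations whose exchange relations are not of Grassmann--Pl\"{u}cker form (the limiting-minor mutations of Appendix \ref{nonplucker} are an explicit example of non-Pl\"{u}cker exchange relations in this algebra). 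The implication $\mathsf{C}\Rightarrow\mathsf{S}$ is itself part of the open Conjecture \ref{wsconj}, so your reduction of \ref{getall} to statements (i) and (ii) leans on unproven pieces of \ref{wsconj} in both directions and is partly circular. Second, even granting (i) and (ii), connectivity of the combinatorial mutation graph of maximal weakly separated collections under square and hexagon moves does not by itself yield the conjecture: one must also show that each such combinatorial move is realized by an actual LP mutation in the corresponding seed of $\mc{LM}_n$, i.e.\ that the exchange polynomial at that variable in that seed is the Grassmann--Pl\"{u}cker binomial --- exchange polynomials in an LP algebra depend on the whole seed, not just on the cluster variables, and outside the solid case (Lemma \ref{allconsec}, Corollary \ref{symallconsec}, Proposition \ref{wsconsec}) the paper gives no control over them. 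Your complexity-decreasing induction via $\Phi$ and $|D|$ is a reasonable heuristic, modeled on Proposition \ref{initialseed} and Lemma \ref{allconsec}, but those arguments work with identities among minors or with purely solid seeds; extending them across the solid/non-solid interface is exactly where the difficulty lies, and no part of your sketch supplies it.
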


\subsection{Weak Separation}

We next introduce an analogue of weakly separated sets from \cite{leclerc} for circular pairs. We recall the definition used in \cite{scott}, \cite{osp}, which is more natural in this case\footnote{Our definition varies slightly from that in the literature in the case where the two sets do not have the same size.}:

\begin{defn}
Two sets $A,B \subset [n]$ are \textbf{weakly separated} if there are no $a,a' \in A \setminus B$ and $b,b' \in B \setminus A$ such that $a<b<a'<b'$ or $b<a<b'<a'$.
\end{defn}

Our analogue is as follows:

\begin{defn}
Two circular pairs $(P;Q)$ and $(R;S)$ are \textbf{weakly separated} if $P \cup R$ is weakly separated from $Q \cup S$, and $P \cup S$ is weakly separated from $Q \cup R$.
\end{defn}

\begin{rmk}
Note that $(P;Q)$ is weakly separated from itself and from $(\widetilde{Q};\widetilde{P})$. Furthermore, $(P;Q)$ is weakly separated from $(R;S)$ if and only if $(\widetilde{Q};\widetilde{P})$ is, so under the convention $(P;Q)=(\widetilde{Q};\widetilde{P})$, weak separation is well-defined.
\end{rmk}

\begin{conjecture} \label{wsconj}
Let $C$ be a set of circular minors, for an $n\times n$ generic response matrix.
\begin{itemize}
\item[$\mathsf{P}$:] $C$ is a minimal positivity test.
\item[$\mathsf{S}$:] $C$ is a maximal set of pairwise weakly separated circular pairs.
\item[$\mathsf{C}$:] $C$ is a cluster of $\mc{LM}_{n}$.
\end{itemize}
\end{conjecture}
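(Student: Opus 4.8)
We do not know how to prove Conjecture \ref{wsconj} in full; here we outline the strategy we would pursue, which specializes to the Grassmannian theorem of \cite{osp}. Three of the implications are close at hand. For $\mathsf{C}\Rightarrow\mathsf{P}$: Lemma \ref{LPpos} gives that a Pl\"ucker cluster is a positivity test, and minimality is a dimension count --- a cluster of $\mc{LM}_{n}$ has exactly $\binom{n}{2}$ non-frozen variables, the dimension of the top cell of $EP_{n}$, and for each cluster variable $v$ one produces a response matrix on which all of $C\setminus\{v\}$ is positive but $v$ vanishes, which is possible because the Laurent phenomenon together with irreducibility of the exchange polynomials shows $v$ is nonconstant on the (positive-dimensional) fibers of the map recording the minors of $C\setminus\{v\}$. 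For $\mathsf{C}\Rightarrow\mathsf{S}$: induct on the mutation distance of $C$ from the initial seed $\mc{D}_{n}$. The base case, that $\mc{D}_{n}$ is pairwise weakly separated, is a direct check on diametric pairs. For the inductive step, a mutation of the form (\ref{P1}) or (\ref{P2}) carries a pairwise weakly separated Pl\"ucker cluster to another one: writing the exchange relation as $xy=ab+ef$, with old cluster $\{x\}\cup C'$ and new cluster $\{y\}\cup C'$, one must check that $y$ is weakly separated from every member of $C'$, which follows from a lemma propagating weak separation through three-term Grassmann--Pl\"ucker relations, in the spirit of \cite{osp}. (This step uses that every Pl\"ucker cluster is reached from $\mc{D}_{n}$ by mutations of the form (\ref{P1})/(\ref{P2}), that is, Conjecture \ref{getall}; without it, one must verify instead that weak separation survives an arbitrary LP mutation between two Pl\"ucker clusters, which is more subtle.)

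The heart of the conjecture --- and the step I expect to be the main obstacle --- is $\mathsf{S}\Rightarrow\mathsf{C}$. The plan, modeled on \cite{osp}, has three parts: (i) \emph{purity}: every maximal pairwise weakly separated collection of circular pairs has exactly $\binom{n}{2}$ elements; (ii) \emph{mutation-connectivity}: any two maximal collections are linked by a sequence of single-element exchanges, each realizing a relation (\ref{P1}) or (\ref{P2}); (iii) conclusion: combining (i), (ii), the fact that $\mc{D}_{n}$ is maximal weakly separated, and $\mathsf{C}\Rightarrow\mathsf{S}$ shows the maximal weakly separated collections are exactly the Pl\"ucker clusters. Steps (i) and (ii) carry all the difficulty. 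I would attack them by induction on $n$, developing a medial-graph analogue of the plabic-graph machinery of \cite{osp}: to a maximal weakly separated collection assign a critical circular planar graph via the connection combinatorics of Theorem \ref{circularminorspositive}, show that single-element exchanges of collections correspond to local equivalences of graphs in the sense of Theorem \ref{localeqenough} ($Y$--$\Delta$ and series/parallel moves), and transport connectivity of critical graphs from \cite{curtis,french} back to collections; purity would then reflect the fact that all critical graphs realizing the top cell have the same number of edges.

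Finally, to close the cycle we would prove $\mathsf{P}\Rightarrow\mathsf{S}$: a minimal positivity test has exactly $\binom{n}{2}$ members, matching the transcendence degree, so its minors are algebraically independent, and a failure of pairwise weak separation among them would --- via an exchange relation of the form (\ref{P1}) or (\ref{P2}) relating two offending members --- allow one of the members to be dropped while retaining a positivity test, contradicting minimality. Of all the ingredients, purity of weakly separated collections of circular pairs, step (i) above, is the crux: it is the analogue of a nontrivial theorem in the Grassmannian setting, and the bookkeeping peculiar to circular pairs --- the identification $(P;Q)=(\widetilde{Q};\widetilde{P})$ and the coexistence of (\ref{P1})- and (\ref{P2})-type exchanges --- resists a purely mechanical transcription of the known arguments.
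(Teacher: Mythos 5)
The statement you are addressing is a conjecture, and the paper does not prove it: it is verified computationally for $n\le 6$, and only weak forms are established. Your proposal, which candidly presents a strategy rather than a proof, in fact tracks those weak forms closely. Your lemma propagating weak separation through three-term exchange relations is the paper's Proposition \ref{pluckerpreservesws}; your observation that the solid-minor situation can be handled completely is Proposition \ref{wsconsec} (the equivalence of $\mathsf{S}$ and $\mathsf{C}$ restricted to solid pairs and mutations of type (\ref{P1}), proved via the double cover $\mc{CM}_n$ and Lemma \ref{isom}); and the only part of purity the paper obtains is the upper bound $|C|\le\binom{n}{2}$ of Proposition \ref{bound}, proved through the edge-counting Proposition \ref{strongbound} and a projection argument, with Hall's theorem as a corollary. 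What you correctly identify as the crux --- full purity (the lower bound), mutation-connectivity of maximal weakly separated collections, and hence $\mathsf{S}\Rightarrow\mathsf{C}$ --- is exactly what remains open in the paper as well, so your outline does not go beyond what is known.

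Beyond the fact that the core is missing, a few of your sketched steps have genuine gaps even as conditional arguments. Your $\mathsf{C}\Rightarrow\mathsf{S}$ induction is, as you note, contingent on Conjecture \ref{getall}, which the paper also leaves open; without it the inductive step does not start. Your minimality argument for $\mathsf{C}\Rightarrow\mathsf{P}$ asserts that for each cluster variable $v$ one can find a response matrix positive on $C\setminus\{v\}$ with $v$ vanishing; the Laurent phenomenon and irreducibility of exchange polynomials do not by themselves produce such a matrix, since the locus where $v=0$ must be shown to meet the closure of the positive region cut out by the remaining minors --- this is a nontrivial existence statement about electrical positroids (compare the paper's unproved remark that removing a single maximal circular pair yields an electrical positroid). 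Finally, your $\mathsf{P}\Rightarrow\mathsf{S}$ sketch assumes that any two non-weakly-separated members of a positivity test fit into a single relation of type (\ref{P1}) or (\ref{P2}) in a way that lets one member be discarded; nothing in the paper supports this, and the relations involve six terms whose membership in $C$ you cannot control. So the proposal is a reasonable research plan aligned with the paper's partial progress, but it neither proves the conjecture nor supplies new ingredients toward the open implications.
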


Conjecture \ref{wsconj} has been computationally verified for $n \leq 6$. We now prove various weak forms of this conjecture. First, for all clusters $C$ of $\mc{LM}_{n}$ that are reachable from the initial seed via Grasmann-Pl\"{u}cker Relations (cf. Conjecture \ref{getall}), the elements of $C$ are pairwise weakly separated:

\begin{proposition}\label{pluckerpreservesws}
If $C$ is a set of pairwise weakly separated circular pairs such that, for some substitution of values into (\ref{P1}) or (\ref{P2}), all the terms on the right hand side, and one term $(P;Q)$ on the left hand side, are in $C$, then the remaining term $(R;S)$ on the left hand side is weakly separated from all of $C-(P;Q)$.
\end{proposition}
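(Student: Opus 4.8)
The plan is to reduce the statement to a single combinatorial lemma about unions of sets and then prove that lemma by a contradiction argument that tracks a ``forbidden quadruple'' through the six circular pairs of the relation.

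\emph{Reductions.} Both (\ref{P1}) and (\ref{P2}) are symmetric under interchanging the two terms of their left-hand side, so we may fix which left-hand term lies in $C$; call it $(P;Q)$ and let $(R;S)$ be the other. I would treat (\ref{P1}) in detail, the case of (\ref{P2}) being entirely analogous. Writing the relation as in Axiom \ref{axside} of Definition \ref{posaxioms}, there is a circular pair $(P_0;Q_0)$ with distinguished rows $a,b$ ($a$ before $b$) and columns $c,d$ such that, after possibly invoking the convention $(P_0;Q_0)=(\widetilde{Q_0};\widetilde{P_0})$, we have $(P;Q)=(P_0-a;Q_0-c)$, $(R;S)=(P_0-b;Q_0-d)$, and the four right-hand terms are $(P_0-a;Q_0-d)$, $(P_0-b;Q_0-c)$, $(P_0-a-b;Q_0-c-d)$, $(P_0;Q_0)$; moreover $a,b,d,c$ occur in clockwise order, with $a,b$ in the $P_0$-block and $c,d$ in the $Q_0$-block, exactly as in the discussion following Proposition \ref{gp}. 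Now fix any $Z=(U;V)\in C$ with $Z\neq(P;Q)$; we may assume $Z\neq(R;S)$, since a circular pair is weakly separated from itself. Because $C$ is pairwise weakly separated, $Z$ is weakly separated from each of the five circular pairs $(P;Q)$, $(P_0-a;Q_0-d)$, $(P_0-b;Q_0-c)$, $(P_0-a-b;Q_0-c-d)$, $(P_0;Q_0)$, and the goal is to deduce that $Z$ is weakly separated from $(R;S)$.

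\emph{Unfolding the definition.} By definition $(R;S)$ is weakly separated from $(U;V)$ iff $R\cup U$ is weakly separated from $S\cup V$ and $R\cup V$ is weakly separated from $S\cup U$; the second condition is obtained from the first by swapping $U$ and $V$ throughout, so it suffices to treat the first. Suppose it fails: there are $w,w'\in(R\cup U)\setminus(S\cup V)$ and $y,y'\in(S\cup V)\setminus(R\cup U)$ in one of the two patterns forbidden by the definition of weak separation. Since $R=P_0-b$ and $S=Q_0-d$, and the row and column sets of the other five circular pairs differ from $R$ and $S$ only in the presence or absence of $a,b,c,d$, the crux is a case analysis on which of $a,b,c,d$ occur among $\{w,w',y,y'\}$, on which side they occur, and in what positions relative to the clockwise order $a,b,d,c$. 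The ``generic'' case, where none of $a,b,c,d$ lies in $\{w,w',y,y'\}$, is immediate: the same quadruple then violates weak separation of $P_0\cup U$ with $Q_0\cup V$, i.e.\ of $Z$ with $(P_0;Q_0)$, a contradiction. In each of the remaining cases the plan is to swap an offending element of the quadruple for whichever of $a,b,c,d$ is ``missing'' on the appropriate side --- a substitution licensed precisely by the clockwise order $a,b,d,c$ --- so as to produce a forbidden quadruple for $Z$ against one of the other four circular pairs on the list, again a contradiction. Running this over all $Z\in C-(P;Q)$ and over both halves of the definition of weak separation completes the argument.

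\emph{Main obstacle, and an alternative.} The content of the proof is the case analysis just described; everything around it is bookkeeping. I expect three sources of friction: (i) $a,b,c,d$ may themselves belong to $U$ or $V$, so knowing ``$x\in R\cup U$'' does not locate $x$ on the $R$ side or the $U$ side, and the argument must be run with honest symmetric differences rather than with $R,S,U,V$ one at a time; (ii) the six circular pairs of (\ref{P1}) come in three sizes, so the unions entering the various weak-separation conditions have different sizes and one must keep the (footnoted) size-dependence in the definition of weak separation in view; (iii) the whole scheme must then be repeated for (\ref{P2}), where the distinguished data is three rows $a,b,c$ and one column $d$ and the terms come in two sizes --- structurally the same argument with its own clockwise-order bookkeeping. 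An alternative worth trying first is to deduce the statement from the known fact that Grassmann--Pl\"{u}cker mutations preserve pairwise weak separation in the Grassmannian \cite{osp}, by encoding circular pairs as ordinary Pl\"{u}cker coordinates; if that encoding turns out to be awkward for our mixed-size relations, the direct case analysis above is the fallback.
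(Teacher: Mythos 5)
Your proposal follows essentially the same route as the paper's proof: observe that any failure of weak separation between $(R;S)$ and some $Z\in C-(P;Q)$ would transfer to one of the five pairs of the relation already known to be in $C$ unless the witnessing quadruple is forced to involve $a,b,c,d$ themselves, and then rule out those residual configurations using the clockwise order $a,b,d,c$. The paper's own argument is exactly this reduction stated in two sentences (with the residual case analysis dismissed as easy), so your more detailed plan — generic transfer to $(P_0;Q_0)$ plus swapping in the missing special vertices, run over both halves of the weak-separation condition and both relations — is a faithful, if still partly unexecuted, elaboration of the same idea.
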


\begin{proof}
Let $a,b,c,d$ be as in (\ref{P1}) or (\ref{P2}). It is clear that $(R;S)$ can only be non-weakly separated from an element of $C-(P;Q)$, if $a,b,c,d$ are boundary vertices forcing the non-weak separation.. However, this is easily seen to be impossible.
\end{proof}

When restricting ourselves to clusters of solid minors, the analogue of Corollary \ref{symallconsec} for $\mc{LM}_n$ matches exactly with a weak form of the equivalence $\mathsf{S}\Leftrightarrow\mathsf{C}$ in Conjecture \ref{wsconj}.

\begin{proposition}\label{wsconsec}
A set $C$ of solid circular pairs can be reached (as a cluster) from the initial cluster $\mc{S}_{n}$ in $\mc{LM}_n$ using only mutations of the form (\ref{P1}) if and only if $C$ is a set of $\binom{n}{2}$ pairwise weakly separated solid circular pairs.
\end{proposition}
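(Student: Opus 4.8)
The plan is to reduce the statement about $\mc{LM}_n$-clusters to the already-established statement about solid seeds of $\mc{CM}_n$ (Corollary \ref{symallconsec}) via the symmetrizing homomorphism (Lemma \ref{isom}), and then to identify, by a direct combinatorial argument, the image under $C$ of a symmetric solid seed with a maximal pairwise weakly separated family of solid circular pairs. First I would invoke Lemma \ref{isom}: a set $C$ of solid circular pairs is reachable from $\mc{S}_n$ by mutations of the form (\ref{P1}) if and only if $C = C(W'_n)$ for some symmetric solid seed $(W'_n,\mc{R}'_n)$ of $\mc{CM}_n$ reachable by symmetric pairs of $(\ref{P1})$-mutations, which by Corollary \ref{symallconsec} is exactly the condition that $(W'_n,\mc{R}'_n)$ be a symmetric solid seed. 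So it suffices to show: a set $C$ of solid circular pairs equals $C(W'_n)$ for a symmetric solid seed $W'_n$ if and only if $C$ is a set of $\binom{n}{2}$ pairwise weakly separated solid circular pairs.

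The heart of the argument is therefore a purely combinatorial characterization of weak separation for two solid circular pairs in terms of the triple statistics $(D,T,k)$. The key computational step is the lemma (the commented-out ``\texttt{sepsolids}'' lemma in the excerpt) stating that solid $(P;Q)$ and $(R;S)$ are weakly separated iff
\[
|D(P;Q)' - D(R;S)'| \le 2\,|k(P;Q)' - k(R;S)'| + 4\,|T(P;Q)' - T(R;S)'|_n
\]
for the corresponding non-symmetric pairs, where $|a-b|_n=\min(|a-b|,|a-b-n|)$. I would prove this by the observation that two solid circular pairs are weakly separated exactly when each of the four ``endpoint'' index-pairs $\{p_1,q_1\}$, $\{p_k,q_k\}$, $\{r_1,s_1\}$, $\{r_m,s_m\}$ meets the index set of the other pair, and then translating this nonemptiness condition into the inequality above by casework on the relative sizes and positions (encoded by $k$ and $T$) of the two pairs. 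Granting this, I would then show that a family $C$ of solid circular pairs is pairwise weakly separated and of size $\binom{n}{2}$ iff, after choosing non-symmetric lifts symmetric about $D=0$, the $D$-values satisfy $|D_1 - D_2| = 2$ for adjacent pairs (same $k$ and $T$ differing by $\tfrac12$, or same $T$ and $k$ differing by $1$) — which, together with the ``one pair per $(T,k)$ slot'' count, is precisely the definition of a symmetric solid seed: pairwise weak separation forces the adjacent-$D$ gap to be at most $2$, hence (by parity, since $D\equiv n\pmod 2$ is fixed within a $(T,k)$-column under mutations of the form (\ref{P1t})) exactly $2$ unless the pairs coincide, and a counting/connectivity argument shows the $\binom{n}{2}$ bound is achieved exactly by such seeds and that no larger pairwise weakly separated family of solid pairs exists.

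The main obstacle I anticipate is the casework in the triple-statistic characterization of weak separation, and in particular handling the wrap-around behavior of $T$ (the $|{\cdot}|_n$ term) and the degenerate boundary cases — maximal pairs where $2k+2\ge n$, pairs of size $1$, and the interaction with the picked/limiting conditions — consistently with the convention $(P;Q)=(\widetilde{Q};\widetilde{P})$ and with the frozen vertices. A secondary subtlety is verifying that the ``only one non-symmetric solid circular pair per $(T,k)$'' slot structure is genuinely preserved under $(\ref{P1t})$-mutations and that the resulting count is exactly $\binom{n}{2}$ (equivalently $2\binom{n}{2}+1$ non-symmetric pairs including $\epair'$), which is where the embedding of Remark \ref{embed} and the structure of $U'_n$ do the bookkeeping; I expect this part to be routine but notation-heavy.
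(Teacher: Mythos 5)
Your converse direction (weakly separated family of size $\binom{n}{2}$ $\Rightarrow$ reachable) is essentially the paper's argument: lift $C$ to $C'$, show $C'\cup\{\epair\}$ is a (symmetric) solid cluster using the endpoint criterion for weak separation, and conclude via Corollary \ref{symallconsec} and Lemma \ref{isom}. The divergence, and the gap, is in the forward direction. The paper never proves, and never needs, the static claim that every symmetric solid seed symmetrizes to a pairwise weakly separated family: it gets the forward implication for free from Proposition \ref{pluckerpreservesws} (a single mutation of type (\ref{P1}) preserves pairwise weak separation) together with the fact that the initial cluster $\mc{D}_n$ is pairwise weakly separated. Your plan instead rests entirely on the commented-out ``sepsolids'' inequality, which you would have to prove, and the way you state it is not correct. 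With a single choice of lifts the inequality does not characterize weak separation: for $n=4$, the pairs $(1;2)$ and $(3;4)$ are not weakly separated (the sets $\{1,3\}$ and $\{2,4\}$ interlace), yet the lifts $(1;2)'$ and $(3;4)'$ have triples $(-2,1.5,1)$ and $(-2,3.5,1)$, so $|D_1-D_2|=0\le 4|T_1-T_2|_4$. The characterization can only hold if the inequality is imposed on \emph{all four} combinations of lifts (here the combination $(1;2)'$, $(4;3)'$, both in the slot $(T,k)=(1.5,1)$, violates it with $|D_1-D_2|=4$), and your Lipschitz-along-paths argument must then be run for every such combination inside the symmetric seed.

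A related imprecision: the two lifts of a solid pair are not ``symmetric about $D=0$'' in the same slot. The $c$-partner of the lift with triple $(D,T,k)$ sits at $(-D,\,T+\tfrac n2,\,k)$, not at $(-D,T,k)$ (e.g.\ $(1;2)'$ and $(2;1)'$ for $n=4$ occupy $T=1.5$ and $T=3.5$), so the bookkeeping of ``one lift per $(T,k)$ slot'' and the symmetry condition must be handled with this shift in mind. None of this is unfixable: with the universal quantifier over lifts, the inequality appears to be true, and your path argument in the $(T,k)$ grid (adjacent $D$-gaps exactly $2$, hence $|D_1-D_2|\le 2|k_1-k_2|+4|T_1-T_2|_n$ between any two occupied slots) would then yield pairwise weak separation. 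But that lemma requires exactly the heavy casework the authors chose to avoid, and as written your proposal both understates its hypotheses and overlooks the one-line route through Proposition \ref{pluckerpreservesws} that the paper actually uses.
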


\begin{proof}
The elements of the initial cluster $\mc{S}_{n}$ in $\mc{LM}_n$, which consists of the diametric pairs $\mc{D}_{n}$, are easily seen to be pairwise weakly separated. Then, by Proposition \ref{pluckerpreservesws}, any cluster we can reach from $\mc{S}_{n}$ using only mutations of the form (\ref{P1}) must also be pairwise weakly separated.

Conversely, consider any set $C$ of $\binom{n}{2}$ pairwise weakly separated solid circular pairs. Let $C' = \{(P;Q)' \mid (P;Q) \in C\}$, and notice that $|C'| = 2 \binom{n}{2}$. By Corollary \ref{symallconsec} and Lemma \ref{isom}, it is enough to prove that that $C' \cup \{\epair\}$ is a solid cluster (see Definition \ref{solidcluster}) in $\mc{CM}_{n}$. From here it will follow by definition that $C'$ is a symmetric solid seed, meaning $C$ can be reached from $\mc{S}_{n}$ in $\mc{LM}_n$ using only mutations of the form (\ref{P1}), as desired.

Before proceeding, it is straightforward to check that circular pairs $(P;Q) = (p_1, \ldots, p_a; q_1, \ldots, q_a)$ and $(R;S) = (r_1, \ldots, r_b; s_1, \ldots, s_b)$ are weakly separated if and only if the following four intersections are non-empty:
\begin{equation*}
\{p_1, q_1\} \cap (R \cup S),~\{p_a, q_a\} \cap (R \cup S),~\{r_1, s_1\} \cap (P \cup Q),~\{r_b, s_b\} \cap (P \cup Q).
\end{equation*}

We now prove that $C'\cup\{\epair\}$ is a solid cluster. First, notice that if non-symmetric circular pairs $(P;Q)' = (p_1, \ldots, p_a; q_1, \ldots, q_a)'$ and $(R;S)' = (r_1, \ldots, r_b; s_1, \ldots, s_b)'$ are such that $k(P;Q)' = k(R;S)'$ and $T(P;Q)' = T(R;S)'$, but $D(P;Q)' \neq D(R;S)'$, then $(P;Q)$ and $(R;Q)$ are not weakly separated. Hence, at most one of $(P;Q)'$ and $(R;S)'$ is in $C'$. As there are exactly $2\binom{n}{2}$ choices of $T$ and $k$ that give valid non-symmetric solid circular pairs, there must be one element of $C'$ corresponding to each choice of $(T,k)$.

Second, consider any adjacent $(P;Q)'$ and $(R;S)'$ in $C'$. Without loss of generality, one of
\begin{itemize}
\item $k(P;Q)' = k(R;S)'$ and $T(P;Q)' = T(R;S)' + \frac12$,
\item $T(P;Q)' = T(R;S)'$ and $k(P;Q)' = k(R;S)' + 1$.
\end{itemize}
holds. In either case, because $(P;Q)'$ and $(R;S)'$ are weakly separated, we can see that $|D(P;Q)' - D(R;S)'| = 2$. It follows that $C' \cup \{\epair\}$ is a solid cluster, so we are done.
\end{proof}

We now relate $\mathsf{C}$ and $\mathsf{P}$. Recall that, by Lemma \ref{LPpos}, any if $C$ satisfies $\mathsf{C}$, then $C$ is a positivity test. Furthermore, $|C| = \binom{n}{2}$. We can prove, similarly to \cite[Theorem 1.2]{leclerc}, that:

\begin{proposition} \label{bound}
If $C$ satisfies $\mathsf{S}$, then $|C| \leq \binom{n}{2}$.
\end{proposition}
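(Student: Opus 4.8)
The statement we want is a ``counting'' bound: any collection $C$ of pairwise weakly separated circular pairs has $|C| \le \binom{n}{2}$. The natural strategy, mimicking Leclerc--Zelevinsky's proof of the analogous bound for weakly separated subsets of $[n]$ (their Theorem 1.2), is to fix a total order on the boundary vertices and ``sweep'' across the circle, assigning to each circular pair in $C$ a combinatorial invariant that records the local behavior of the pair near a moving breakpoint, and then argue that distinct weakly separated pairs must get distinct (or ``separated'') invariants, so that $|C|$ is bounded by the number of possible invariants. Concretely, I would first unpack the definition of weak separation of circular pairs into a statement about the four sets $P\cup R$, $Q\cup S$, $P\cup S$, $Q\cup R$, and use the intersection reformulation already noted in the proof of Proposition \ref{wsconsec} (for solid pairs) as a template, extending it as needed to general circular pairs.

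**Key steps.** (1) Reduce the two-set-of-size-$k$ notion of weak separation to the classical one by observing that a circular pair $(P;Q)$ is essentially a partition of $P\cup Q$ into two interleaved-on-the-circle blocks, and weak separation of $(P;Q)$ from $(R;S)$ forces compatibility of these partitions; in particular derive that each circular pair determines, for every ``gap'' between consecutive boundary vertices, a small local state (how many strands of the pair cross that gap, going from $P$-side to $Q$-side), and that weak separation forces these local states to be comparable across all of $C$ at each gap. (2) Encode each $(P;Q)\in C$ by the sequence of its local states around the circle; show that weak separation implies this sequence ``changes direction'' in a controlled way, so that the set of achievable sequences from a pairwise weakly separated family embeds into a set of lattice paths / intervals of size at most $\binom{n}{2}$. (3) Count: a clean way to get exactly $\binom{n}{2}$ is to set up an injection from $C$ into the set of unordered pairs $\{i,j\}\subset[n]$ (or pairs (gap, height)), perhaps by sending $(P;Q)$ to a canonical ``corner'' or extremal pair of indices where its local profile is maximal, and checking injectivity via the weak separation hypothesis. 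Since Proposition \ref{initialseed} and the count $|\mc D_n| = \binom{n}{2}$ already realize $\binom{n}{2}$ as the size of a maximal weakly separated family of solid pairs, the target number is forced and the combinatorial model just needs to be chosen so that it has exactly $\binom{n}{2}$ states.

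**Main obstacle.** The hard part will be handling circular pairs of \emph{different sizes} simultaneously: the LZ argument in the subset case is cleanest when all sets have the same cardinality, and here $C$ may contain pairs of every size $k$ from $0$ to $\lfloor n/2\rfloor$. I expect the subtlety to be in showing that the local-state/invariant assignment still yields an injection — i.e., that two weakly separated circular pairs of different sizes cannot collide — and in checking that the footnote's non-standard convention for weak separation of sets of unequal size interacts correctly with this. A secondary nuisance is the convention $(P;Q) = (\widetilde Q;\widetilde P)$, which must be respected by whatever invariant is chosen (so the invariant should be symmetric under swapping $P$ and $Q$). I would manage these by first proving the bound for families of pairs all of one fixed size (direct port of LZ), then inducting or stratifying by size and using the subset axiom / nesting structure (a weakly separated family restricted to pairs of size $\ge k$ still behaves well) to assemble the full bound; alternatively, one can thicken each circular pair of size $k$ into a genuine pair of $k$-subsets of a doubled ground set $[2n]$ (left copy for $P$, right copy for $Q$), translate circular weak separation into ordinary weak separation there, and invoke the known LZ bound on $[2n]$, then prune the overcount back down to $\binom{n}{2}$ using the structural constraints that distinguish images of circular pairs. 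I would pursue whichever of these two routes produces the cleaner injection.
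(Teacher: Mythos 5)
Your proposal is a strategy outline rather than a proof, and the crux of the statement is exactly what is left unestablished in both of the routes you sketch. In the first route, everything hinges on the claim that one can assign to each $(P;Q)\in C$ a canonical pair of indices (a ``corner'' of its local profile) so that weak separation forces injectivity; but no mechanism for injectivity is given, and this is not a routine verification. Note that the paper itself does not produce any canonical injection $C\to\{\{i,j\}\}$ directly: it first proves the counting bound and only afterwards extracts an injection via Hall's marriage theorem, which is a strong hint that a direct ``send each pair to its extremal chord'' map fails to be injective on the nose. Likewise your intermediate claim that weak separation makes the local crossing-states ``comparable across all of $C$ at each gap'' is left undefined and unproved, and your step (2) (``embeds into a set of lattice paths of size at most $\binom{n}{2}$'') simply restates the desired bound in different language. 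In the second route, the doubling trick has two concrete problems: the paper's notion of weak separation of circular pairs is a condition on the four unions $P\cup R$, $Q\cup S$, $P\cup S$, $Q\cup R$, and it is not shown (nor obvious) that it translates into Leclerc--Zelevinsky weak separation of subsets of a doubled ground set $[2n]$; and even granting such a translation, the LZ bound on $[2n]$ is on the order of $\binom{2n}{2}$, roughly four times $\binom{n}{2}$, so ``pruning the overcount'' is not a cleanup step but the entire content of the proposition, and no argument for it is offered.

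For comparison, the paper proves a stronger statement (Proposition \ref{strongbound}): viewing each circular pair as its set of non-crossing chords $E(P;Q)$ (Definition \ref{connset}), if $E=\bigcup_{(P;Q)\in C}E(P;Q)$ then $|C|\le|E|$, from which the bound follows by taking $E$ to be all $\binom{n}{2}$ chords. The induction deletes a chord $\{a,b\}$ chosen extremal (no other chord of $E$ has both endpoints on the clockwise arc from $a$ to $b$) via a projection $J$, and rests on two lemmas: the projected family is still pairwise weakly separated (Lemma \ref{wsinJ}), and at most one element of $C$ can contain $\{a,b\}$ and have its projection already in $C$ (Lemma \ref{atmost1J}), so at most one element of $C$ is lost per deleted chord. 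If you want to salvage your plan, proving analogues of these two lemmas is the missing work; as written, your proposal has a genuine gap at precisely the point where the bound must be forced.
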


In fact, we can prove a slightly stronger result by interpreting a circular pair as a set of edges.

\begin{defn}
For a circular pair $(P;Q) = (p_1, \ldots, p_k; q_1, \ldots, q_k)$, define $E(P;Q) = \{ \{p_i, q_i\} \mid i \in \{1, \ldots, k \}\}$ (cf. Definition \ref{connset}). Similarly, for a set $D \subset \{ \{i,j\} \mid 1 \leq i < j \leq n \}$ of edges such that no two edges in $D$ cross, let $P(E)$ be the circular pair for which $E(P(D)) = D$.
\end{defn}

\begin{proposition} \label{strongbound}
If $C$ is a set of pairwise weakly separated circular pairs with 
\begin{equation*}
E = \bigcup_{(P;Q) \in C} E(P;Q),
\end{equation*}
then $|C| \leq |E|$.
\end{proposition}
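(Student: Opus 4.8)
The plan is to induct on $|C|$, peeling off a single circular pair from $C$ whose removal deletes at least one edge from the union $E$. Concretely, I would look for a circular pair $(P;Q) \in C$ that contains an edge $\{p_i,q_i\}$ which appears in no other pair of $C$; removing $(P;Q)$ then gives a pairwise weakly separated set $C'$ with $|C'| = |C|-1$ and $|E'| \le |E| - 1$, so the inductive hypothesis $|C'| \le |E'|$ finishes. The base case is $C = \{\epair\}$, where $|C| = 1$ and we may regard $|E| = 0$ with the (empty) convention, or more cleanly start the induction at the first nonempty stage; I would phrase the base case as $E = \emptyset \Rightarrow C \subseteq \{\epair\}$.

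The heart of the argument, and the main obstacle, is producing such a ``removable'' pair: I need to show that whenever $C$ has more than one element (equivalently $E \ne \emptyset$), some $(P;Q) \in C$ owns an edge not shared with any other member of $C$. I would approach this by an extremal choice. Order the edges of $E$ by, say, the clockwise distance $d(p,q)$ between their endpoints (ties broken by position on the circle), and consider an edge $e = \{p,q\} \in E$ that is \emph{extremal} in a suitable sense — for instance, an edge whose ``span'' (the shorter arc between its endpoints) is inclusion-minimal among all edges of $E$, or an edge incident to a boundary vertex that is used by as few pairs as possible. The weak separation condition, rewritten via the four-intersection criterion established just above Proposition~\ref{wsconsec} (namely that $(P;Q)$ and $(R;S)$ are weakly separated iff $\{p_1,q_1\}$ and $\{p_a,q_a\}$ each meet $R\cup S$ and symmetrically), severely constrains how two pairs can share or fail to share edges: if two weakly separated circular pairs both contain edges ``close to'' the minimal edge $e$, their edge sets are forced to interact in a way that contradicts minimality or forces the pairs to agree on $e$. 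I expect that a minimal-span edge $e$ is contained in exactly one pair $(P;Q) \in C$: if $e \in E(P;Q) \cap E(R;S)$ with $(P;Q) \ne (R;S)$, then since the two pairs are weakly separated and each has $e$ as one of its connection-edges, one can derive that $e$ is the full innermost connection of both, and then use minimality of span together with the fact that the pairs differ to extract a crossing or a violation of weak separation.

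Finally, once the removable pair $(P;Q)$ is located, I would verify the two bookkeeping claims: (i) $C \setminus \{(P;Q)\}$ is again pairwise weakly separated — immediate, since weak separation is a property of pairs of elements; and (ii) $\bigcup_{(R;S) \in C \setminus \{(P;Q)\}} E(R;S) \subsetneq E$, because the chosen edge $e$ lies in $E(P;Q)$ and in no other $E(R;S)$. Then $|C| - 1 = |C \setminus\{(P;Q)\}| \le |E'| \le |E| - 1$, giving $|C| \le |E|$. Proposition~\ref{bound} then follows as an immediate corollary, since the union of edge sets over all circular pairs on $n$ boundary vertices has at most $\binom{n}{2}$ elements. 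The one technical point to be careful about is the convention $(P;Q) = (\widetilde Q; \widetilde P)$: the edge set $E(P;Q)$ is insensitive to this swap, so the notion of ``an edge belonging to exactly one element of $C$'' is well-defined, but I would remark on this explicitly to avoid ambiguity.
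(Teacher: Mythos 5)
Your overall strategy is different from the paper's, and it has a genuine gap at its central step. You induct on $|C|$ by removing a pair that owns a ``private'' edge (an edge lying in no other member of $C$), which requires the structural claim that such a pair always exists. The specific extremal mechanism you propose for producing it is false: a minimal-span edge can lie in two distinct members of a pairwise weakly separated family. For instance, on a boundary circle with $n\ge 4$ vertices, take $C=\{(1;2),\,(4,1;3,2)\}$; both are circular pairs, they are weakly separated (check the four-intersection criterion or the definition directly), and both contain the connection $\{1,2\}$, which joins adjacent boundary vertices and hence has minimal span. So ``a minimal-span edge is contained in exactly one pair'' fails, and your sketch of how weak separation would force uniqueness does not go through. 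In this small example the other pair happens to own $\{3,4\}$ privately, so the weaker existence claim survives here, but that weaker claim is exactly what your induction needs and you have not proved it. Note also that it does not follow from Hall-type considerations: a set system can satisfy $|C'|\le|E'|$ for all subfamilies while no member has a private element (e.g.\ three $2$-element sets pairwise sharing one element), so the existence of a removable pair would have to be extracted from the weak-separation structure itself, which is the hard content and is missing. Whether that claim is even true is not settled by your argument or by the proposition.

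The paper takes a route that sidesteps this issue: it inducts on $|E|$ rather than $|C|$. One chooses an ``innermost'' edge $\{a,b\}$ (no other edge of $E$ has both endpoints on the clockwise arc from $a$ to $b$), deletes it from every pair containing it via the projection $J$, and proves two lemmas: the projected family $C'$ is still pairwise weakly separated (Lemma \ref{wsinJ}), and at most one pair containing $\{a,b\}$ can collide with an element already in $C$ after projection (Lemma \ref{atmost1J}), whence $|C'|\ge |C|-1$ while $|E'|=|E|-1$. The collision control of Lemma \ref{atmost1J} plays the role that your ``private edge'' claim was meant to play, but it is a statement about at most one coincidence after deleting a single edge, not about a whole pair being removable, and it is provable from weak separation. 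If you want to salvage your approach, you would need either a correct proof that some member of $C$ always owns a private edge, or to switch to an edge-deletion argument of the paper's type.
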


\begin{proof}
Procced by induction on $|E|$. The case $|E|=0$ is trivial, so assume the result is true for $|E| < m$. Suppose that we have $C,E$ with $|E|=m$, and assume for sake of contradiction that $|C|>m$. Choose some $\{a,b\} \in E$ such that, for any other $\{c,d\} \in E$, $c$ and $d$ do not both lie on the arc drawn from $a$ to $b$ in the clockwise direction (this arc is taken to include both $a$ and $b$). Now, letting $E' = E \setminus \{ \{a,b\} \}$, define the projection map $J:2^E \to 2^{E'}$ by:
\begin{equation*}
J(D) = \begin{cases}
D \setminus \{\{a,b\}\}  & \text{if } \{a,b\} \in D, \\
D &\text{otherwise.}
\end{cases}
\end{equation*}
We may define $J$ for circular pairs analogously: $J(P;Q) = V(J(E(P;Q)))$, and let $C' = \{ J(P;Q) \mid (P;Q) \in C \}$. Let us now prove two lemmas.

\begin{lemma}\label{wsinJ}
The elements of $C'$ are pairwise weakly separated.
\end{lemma}
\begin{proof}
Assume, for sake of contradiction, that we have $(P;Q), (R;S) \in C$, such that $J(P;Q)$ and $J(R;S)$ are not weakly separated. If $\{a,b\}\in E(P;Q),E(R;S)$ or $\{a,b\}\notin E(P;Q),E(R;S)$, the claim is clear. Thus, we may assume without loss of generality, that $a \in P$, $b \in Q$, and $\{a,b\} \notin E(R;S)$. Because $J(P;Q)$ and $J(R;S)$ are not weakly separated, suppose, without loss of generality, that $w,y \in R \cup (P \setminus \{a\})$ and $x,z \in S \cup (Q \setminus \{b\})$ such that $w,x,y,z$ are in clockwise order, and furthermore $w,y \notin S \cup (Q \setminus \{b\})$ and $x,z \notin R \cup (P \setminus \{a\})$. Note that, if $a,b \notin \{w,x,y,z\}$, then $w,x,y,z$ would also show that $(P;Q)$ is not weakly separated from $(R;S)$.

Assume that $a=w$; the other cases are similar. First, suppose that $b \neq x$. Then, we must have $w \in R$, and we obtain a similar contradiction to before. On the other hand, if $b=x$, then $a=w \in R$ and $b=x \in S$. But, since $\{a,b\} \notin E(R;S)$, $a$ and $b$ are in distinct non-intersecting edges in $E(R;S)$. Because these edges are also in $E$, we have a contradiction of the construction of $\{a,b\}$. The lemma follows.
\end{proof}

\begin{lemma}\label{atmost1J}
There is at most one $(P;Q) \in C$ with $\{a,b\} \in E(P;Q)$ and $J(P;Q) \in C$.
\end{lemma}

\begin{proof}
Assume for sake of contradiction that we have distinct circular pairs $(P;Q), (R;S) \in C$, with $\{a,b\} \in E(P;Q), E(R;S)$, and $J(P;Q), J(R;S) \in C$. Without loss of generality, assume that $a \in P,R$ and $b \in Q,S$. By the fact that $J(P;Q), J(R;S) \in C$ and the construction of $\{a,b\}$, there exist two points $u,v$ on the clockwise arc from $b$ to $a$ not containing its endpoints, which are both in exactly one of $P,Q,R,S$.

First, if $u \in Q$ and $v \in P$, then the points $a,b,u,v$ force $J(P;Q)$ and $(R;S)$ not to be weakly separated. However, $J(P;Q),(R;S)\in C$, so we have a contradiction. 

If $u \in Q$ and $v \in R$, then we get a similar contradiction if $d(b,v)<d(b,u)$, so we have that $a,b,u,v$ are in clockwise order. Because $|Q|=|P|$ and $|R|=|S|$, there must be an $x$ such that $x \in P \cup S$ but $x \notin R \cup Q$.

We have four cases for the position of $x$, relative to $a,b,u,v$. If $a,x,b$ are in clockwise order, then we get a contradiction of our construction of $\{a,b\}$. If $b,x,u$ are in clockwise order, then $a,b,x,u$ either contradicts that $(P;Q)$ is a circular pair or that $(P;Q)$ is weakly separated from $J(R;S)$. The case in which $v,x,a$ are in clockwise order is similar. Finally, if $u,x,v$ are in clockwise order, then either $a,b,u,x$ or $a,b,x,v$ contradicts that either $(P;Q)$ is weakly separated from $J(R;S)$ or that $J(P;Q)$ is weakly separated from $(R;S)$.

The other cases follow similarly.
\end{proof}

We can now finish the proof of Proposition \ref{strongbound}. By Lemma \ref{wsinJ}, the elements of $C'$ are pairwise weakly separated, and we also have $E'=\bigcup_{(P;Q)\in C'}E(P;Q)$. Thus, by the inductive hypothesis, $|C'|\le|E'|=|E|-1$. However, it is easy to see from Lemma \ref{atmost1J} that $|C'|\ge|C|-1$, so the induction is complete.
\end{proof}

Now, Proposition \ref{bound} follows easily by taking $E = \{ \{i,j\} \mid 1 \leq i < j \leq n \}$ in Proposition \ref{strongbound}. Proposition \ref{strongbound} also has another natural corollary:

\begin{corollary}
For any set $S$ of pairwise weakly separated circular pairs, there is an injective map $e : S \to \{ \{i,j\} \mid 1 \leq i < j \leq n \}$ such that $e(P;Q) \in E(P;Q)$ for each $(P;Q) \in S$.
\end{corollary}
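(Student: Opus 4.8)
The plan is to deduce this corollary directly from Proposition \ref{strongbound} by a Hall's theorem / defect-version matching argument, exactly as one obtains a system of distinct representatives from a covering bound. Set $\mathcal{E} = \{\{i,j\}\mid 1\le i<j\le n\}$ and consider the bipartite graph $H$ whose left vertices are the circular pairs in $S$, whose right vertices are the edges in $\mathcal{E}$, and where $(P;Q)$ is joined to each element of $E(P;Q)$. An injective map $e\colon S\to\mathcal{E}$ with $e(P;Q)\in E(P;Q)$ is precisely a matching of $H$ saturating $S$, so by Hall's marriage theorem it suffices to verify Hall's condition: for every subset $C\subseteq S$, $\bigl|\bigcup_{(P;Q)\in C} E(P;Q)\bigr|\ge |C|$.

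The key observation is that this is \emph{verbatim} the conclusion of Proposition \ref{strongbound}: any subset $C$ of $S$ is again a set of pairwise weakly separated circular pairs (weak separation of a set is inherited by all its subsets), so applying Proposition \ref{strongbound} to $C$ with $E = \bigcup_{(P;Q)\in C}E(P;Q)$ yields exactly $|C|\le |E|$. Hence Hall's condition holds for every $C\subseteq S$, and Hall's theorem produces the desired injective choice function $e$.

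I expect no real obstacle here; the only point requiring a word of care is the reduction itself, namely noting that a subset of a pairwise weakly separated family is still pairwise weakly separated (immediate, since weak separation is a condition on pairs of circular pairs) so that Proposition \ref{strongbound} may be invoked for \emph{every} subset, not just for $S$ itself. Given that, the corollary is a one-line consequence of Hall's theorem. If one prefers to avoid citing Hall's theorem, the same conclusion can be extracted by re-running the induction in the proof of Proposition \ref{strongbound}: at each stage one removes the edge $\{a,b\}$, and Lemma \ref{atmost1J} guarantees that at most one circular pair $(P;Q)\in C$ containing $\{a,b\}$ survives the projection $J$, so one may assign $e(P;Q)=\{a,b\}$ to that pair (or to any one pair of $C$ containing $\{a,b\}$ if none survive) and recurse on $C'$, $E'$; the injectivity of $e$ follows because distinct edges are assigned at distinct stages. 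Either route completes the proof.
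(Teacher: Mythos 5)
Your proposal is correct and takes essentially the same route as the paper, whose proof is the one-line observation that Proposition \ref{strongbound} supplies exactly the Hall condition; your spelled-out verification (subsets of a pairwise weakly separated family remain pairwise weakly separated, so the proposition applies to every $C\subseteq S$) is precisely what that one line leaves implicit.
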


\begin{proof}
Proposition \ref{strongbound} gives exactly the condition required to apply Hall's marriage theorem.
\end{proof}

\section{Acknowledgments}

This work was undertaken at the REU (Research Experiences for Undergraduates) program at the University of Minnesota-Twin Cities, supported by NSF grants DMS-1067183 and DMS-1148634. The authors thank Joel Lewis, Gregg Musiker, Pavlo Pylyavskyy, and Dennis Stanton for their leadership of the program, and are especially grateful to Joel Lewis and Pavlo Pylyavskyy for introducing them to this problem and for their invaluable insight and encouragement. The authors also thank to Thomas McConville for many helpful discussions. Finally, the authors thank Vic Reiner, Jonathan Schneider, and Dennis Stanton for suggesting references, and Damien Jiang and Ben Zinberg for formatting suggestions.

\appendix

\section{Proof of Theorem \ref{circposthm} in the BSP case}\label{BSPcase}

\numberwithin{equation}{section}

We now finish the proof of Theorem \ref{circposthm}. Recall that $S_{0}$ is an electrical positroid for which no circular planar graph $G$ has $\pi(G)=S_{0}$, and that $S_{0}$ is chosen to be maximal among electrical positroids with this property. By assumption, $S_{0}$ has the $(i,i+1)$-BEP for each $i$, and does not have the $1$-BSP. Furthermore, recall the construction of $S_{1}$ from the end of \S\ref{posproof}. Then, we have:

\begin{lemma}
$S_{1}$ is an electrical positroid, and has the $1$-BSP.
\end{lemma}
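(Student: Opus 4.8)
The plan is to verify the two assertions separately, leaning on the structure of $S_1$ as the ``$1$-BSP closure'' of $S_0$. Recall that $S_1 = S_0 \cup \{(P+x;Q+y) : (P+x;Q+1),(P+1;Q+y)\in S_0,\ 1\notin P\cup\{x\},\ 1\notin Q\cup\{y\}\}$. I would first record the easy half: $S_1$ has the $1$-BSP essentially by construction, but one must check closure is actually attained in a single step, i.e.\ that applying the $1$-BSP rule to elements of $S_1$ produces nothing new. This amounts to showing that if $(P+x;Q+1),(P+1;Q+y)\in S_1$, then $(P+x;Q+y)\in S_1$; since each of the two hypotheses is either in $S_0$ or arises from a $1$-BSP completion, one chases through the (few) cases. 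The key observation is that $S_0$ already has all $n$ BEPs, so the BEPs available in $S_0$ let us ``slide'' the boundary index $1$ around, and iterating the BSP rule reduces to a bounded number of applications; Lemma \ref{allBEPBSP}'s proof technique (induction on size of the circular pair) is the model to imitate.

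The substantive half is that $S_1$ is an electrical positroid, i.e.\ satisfies Axioms \ref{axside}--\ref{axempty} of Definition \ref{posaxioms}. Axiom \ref{axempty} is immediate since $\epair\in S_0\subset S_1$, and the Subset Axiom \ref{axsub} follows because every circular pair added to $S_1$ is added ``together with all its subsets'' — or rather, one checks directly that if $(P+x;Q+y)\in S_1\setminus S_0$ via $(P+x;Q+1),(P+1;Q+y)\in S_0$, then deleting a matched pair from $(P+x;Q+y)$ either deletes $x,y$ (giving $(P;Q)\in S_0$ by the Subset Axiom applied to either witness) or deletes a pair from $(P;Q)$, and in the latter case the Subset Axiom in $S_0$ applied to both witnesses $(P+x;Q+1)$ and $(P+1;Q+y)$ shows the corresponding subset is again a $1$-BSP completion, hence in $S_1$. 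The six Grassmann--Pl\"ucker axioms are the real work: given a Grassmann--Pl\"ucker instance whose hypothesis terms lie in $S_1$, we must produce the conclusion terms in $S_1$. The strategy is casework on which of the hypothesis terms lie in $S_0$ versus $S_1\setminus S_0$, and when a term is in $S_1\setminus S_0$ we unwind it into its two $S_0$-witnesses. The case where every relevant term is in $S_0$ is free (since $S_0$ is an electrical positroid, though one must check the conclusion terms land in $S_1$, not just avoid $S_0$). When some term involves the boundary vertex $1$ nontrivially, the idea is to replace $1$ in that term by an auxiliary index — concretely, to reconstruct from the witnesses a modified Grassmann--Pl\"ucker instance living entirely in $S_0$, apply an $S_0$-axiom there, and then re-complete via the $1$-BSP. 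This is exactly the flavor of the casework carried out in the proof of Lemma \ref{findclos} for the boundary-edge setting, adapted from BEP to BSP.

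The main obstacle I anticipate is the bookkeeping in the Grassmann--Pl\"ucker verification: the index $1$ (and the auxiliary indices $x,y$ used in the $1$-BSP) can appear in various positions among the $a_i, b_j$ of Definition \ref{posaxioms}, and for each axiom the cyclic position of $1$ relative to $a,b,c,d$ matters. There is real danger of missing a case or of an instance where neither the ``everything in $S_0$'' shortcut nor the ``re-index and re-complete'' trick applies cleanly; in those situations one must combine several of the $S_0$-axioms (as in Lemma \ref{rtol}, Lemma \ref{ltol}) to bridge the gap. My expectation, by analogy with Lemma \ref{findclos}, is that each problematic case is resolvable but that writing all of them out is lengthy; accordingly I would present the two or three representative cases in full (the term being $(\,x;\,1\,)$-type, the term being $(1,\ldots;\ldots)$-type of larger size) and assert the remaining cases follow by the same logic — indeed, given the paper's stated convention of deferring the parallel computations, it is reasonable here to write ``The proof is analogous to that of Lemma \ref{findclos}, with the boundary edge replaced by the boundary spike, and is omitted'' after isolating the one genuinely new ingredient, namely that the $1$-BSP completions interact with each axiom in the expected way.
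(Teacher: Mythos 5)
The paper does not actually supply an argument here: its proof of this lemma (and of the parallel BEP-case lemma in \S 3.2) is simply the word ``Straightforward,'' so there is no official proof for your outline to diverge from. Measured against what the authors evidently have in mind, your plan is sound and, if anything, more detailed than the paper. Two comments. First, the $1$-BSP half is even easier than you make it: every pair in $S_1\setminus S_0$ is of the form $(P+x;Q+y)$ with $1\notin P\cup\{x\}\cup Q\cup\{y\}$ (if $x=1$ or $y=1$ the pair is already in $S_0$), so no element of $S_1\setminus S_0$ contains the vertex $1$. Hence in any instance of the $1$-BSP for $S_1$, the hypotheses $(P+x;Q+1)$ and $(P+1;Q+y)$ automatically lie in $S_0$, and the conclusion $(P+x;Q+y)$ is in $S_1$ by the very definition of $S_1$; no ``sliding via BEPs'' or iteration of the closure is needed. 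Second, your treatment of the Subset Axiom is a bit too quick: in $(P+x;Q+y)$ the element $x$ need not be matched with $y$, and in the witnesses $(P+x;Q+1)$ and $(P+1;Q+y)$ the vertex $1$ need not be matched with $x$, $y$, or the deleted element, since inserting $1$ shifts the pairing. So ``apply the Subset Axiom to the witnesses'' does not by itself produce the required smaller witnesses; one must bridge the shifted matchings using combinations of the first six axioms, in the style of Lemmas \ref{walk}, \ref{rtol}, and \ref{ltol}, exactly as you anticipate for the Grassmann--Pl\"ucker casework. With that caveat, your strategy (unwinding each element of $S_1\setminus S_0$ into its two $S_0$-witnesses and doing casework on the position of $1$, as in Lemma \ref{findclos} and the appendix lemma on $T=S_0\cap S_0'$) is the natural way to fill in what the paper leaves unwritten, and I see no step that would fail.
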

\begin{proof}
Straightforward.
\end{proof}

By assumption, $S_{1}=\pi(G_{1})$, for some circular planar graph $G_{1}$, which has a boundary spike at 1. Let $G_{0}$ be graph obtained after contracting the boundary spike in $G_{1}$. We will prove that $\pi(G_{0})=S_{0}$, which will yield the desired contradiction. The proof is similar to the case handled in \S\ref{posproof}.

Recall the notation from Definition \ref{betternotation}, where we let $A_{k,\ell}$ denote the sequence $a_{k},\ldots,a_{\ell}$.

\begin{defn}
A circular pair $(P;Q)=(A_{1,n};B_{1,n})$ is said to be \textbf{incomplete} if $(P;Q)\notin S$ but $(P;Q')=(A_{1,n}; 1,B_{2,n})\in S$ and $(P';Q)=(1,A_{1,n};B_{1,n})\in S$. If, on the other hand, $(P;Q)\in S$ in addition to $(P;Q')$ and $(P';Q)$, $(P;Q)$ is said to be \textbf{complete}.
\end{defn}

We also define the set $\mc{P}$ of \textbf{primary} circular pairs as in \S\ref{posproof}, where we take circular pairs of the form $(P;Q)=(A_{1,n};B_{1,n})$ with the property that $(A_{1,n}; 1, B_{2,n}),(1,A_{2,n},B_{2,n})\in S$. It is easy to see that the analogue of Lemma \ref{crossing} holds when $(P;Q)$ is incomplete. Then, because $S_{0}$ has all BEPs, the primary circular pairs $(A_{1,n};B_{1,n})$ will all have $a_1=2,b_1=n$. We now prove a series of lemmas, mirroring those in \ref{posproof}.

\begin{lemma}
For an incomplete circular pair $(P;Q)=(A_{1,n},B_{1,n})$, any electrical positroid $Z$ satisfying $S_{0}\cup\{(P;Q)\}\subset Z\subset S_{1}$ con  $(P+a;Q+b)$ with $a>a_{|P|},b<b_{|P|}$ when $(P+a;Q+b)$ is incomplete.
\end{lemma}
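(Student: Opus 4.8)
The plan is to carry over, essentially line for line, the proof of Lemma~\ref{Pa1Qbn} from \S\ref{posproof}, with the ``completion'' of an incomplete pair --- which there was $(P+1;Q+n)$ --- replaced by the pair $(P;Q)$ itself, since in the boundary-spike setting it is precisely the adjunction of $(P;Q)$ to $S_{0}$ that completes it. The first ingredient I would establish is the structural observation that plays the role of ``every element of $Z\setminus S_{0}$ has the form $(P'+1;Q'+n)$'': from the construction of $S_{1}$ at the end of \S\ref{posproof}, every circular pair in $S_{1}\setminus S_{0}$, hence every circular pair in $Z\setminus S_{0}$, omits the spike vertex $1$; equivalently, any circular pair lying in $Z$ that uses the vertex $1$ must already lie in $S_{0}$. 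I would also note, as in \S\ref{posproof}, that the boundary-spike analogue of Lemma~\ref{crossing} (which holds for incomplete pairs, as remarked just above the statement) supplies all the ``crossing'' pairs $(a_{i};b_{i+1}),(a_{i+1};b_{i})$ that will be needed as hypotheses.

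The main step is then a single application of Axiom~\ref{axside} to the circular pair obtained from $(P+a;Q+b)$ by routing its first connection $\{a_{1},b_{1}\}$ through the spike, i.e.\ by replacing $a_{1}$ with $1$ on the appropriate side; the hypotheses of the axiom are supplied by $(P;Q)\in Z$ together with one of the two $1$-substituted witnesses that $(P+a;Q+b)$ is incomplete (both of which lie in $S_{0}\subset Z$, using also the Subset Axiom). The axiom yields a dichotomy: either $(P+a;Q+b)\in Z$, in which case we are done, or two ``crossed'' auxiliary circular pairs lie in $Z$. Each of those auxiliary pairs uses the vertex $1$, so by the structural observation above it must lie in $S_{0}$; feeding the two of them into Axiom~\ref{axcross} (in the same spliced configuration) then forces $(P;Q)\in S_{0}$, contradicting the incompleteness of $(P;Q)$. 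Hence only the first alternative can occur, which is the assertion. The degenerate cases --- where $a$ or $b$ is a circular neighbour of the spike vertex $1$, so that one of the incompleteness witnesses of $(P+a;Q+b)$ collapses onto $(P;Q)$ itself --- are dispatched by the same argument with a shorter chain of axiom invocations, exactly as the corresponding degenerate cases are handled in \S\ref{posproof}.

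I expect the main obstacle to be positional bookkeeping rather than anything conceptual: pinning down the clockwise location of the spike vertex $1$ relative to $P$, $Q$, $a$, and $b$ so that the cited instances of Axioms~\ref{axside} and~\ref{axcross} are legitimate (the four distinguished vertices occurring in the correct cyclic order and with the correct multiplicities), and confirming that the ``crossed'' auxiliary pairs genuinely use the vertex $1$ and therefore cannot be among the new, $1$-free members of $Z\setminus S_{0}$. Once these checks are in place, every object in sight is a circular pair and the remainder is the same routine casework as in the proof of Lemma~\ref{Pa1Qbn}.
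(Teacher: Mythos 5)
Your structural observation (every pair in $Z\setminus S_{0}$ omits the spike vertex $1$, so any pair in $Z$ containing $1$ already lies in $S_{0}$) is correct, but the main step does not go through. First, the instance of Axiom \ref{axside} you describe is not well-formed: the two hypothesis pairs of Axiom \ref{axside} must be obtained from one common square pair by deleting a single row and a single column, hence have equal size, whereas $(P;Q)$ has size $|P|$ and the $1$-substituted witnesses of the incompleteness of $(P+a;Q+b)$ have size $|P|+1$; and if you instead take the ambient pair to be the $1$-substituted pair $(1,A_{2,|P|},a;B_{1,|P|},b)$, then neither $(P;Q)$ nor the target $(P+a;Q+b)$ is a sub-pair of it (both contain $a_{1}$, which has been replaced by $1$), so the dichotomy of Axiom \ref{axside} can never output $(P+a;Q+b)\in Z$. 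The only viable instance whose favourable branch is the claim is the one the paper uses: ambient pair $(P+a;Q+b)$, with hypotheses $(P;Q)\in Z$ and $(A_{2,|P|},a;B_{2,|P|},b)\in S_{0}$, the latter obtained from a witness by the Subset Axiom.

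Second, and more fundamentally, in that instance the crossed pairs produced by the unfavourable branch are $(A_{2,|P|},a;B_{1,|P|})$ and $(A_{1,|P|};B_{2,|P|},b)$, and these do \emph{not} contain the vertex $1$ (nothing inside $(P+a;Q+b)$ does). So your structural observation gives no way to place them in $S_{0}$: they may a priori be among the new $1$-free pairs of $S_{1}\setminus S_{0}$, and applying Axiom \ref{axcross} inside $Z$ would then only yield $(P;Q)\in Z$, which is no contradiction since $(P;Q)$ was added to $Z$ by hypothesis. This is precisely where the boundary-spike case diverges from Lemma \ref{Pa1Qbn}: the paper follows the Axiom \ref{axside} step with two applications of Axiom \ref{axmid} on enlarged non-square pairs that genuinely involve the vertex $1$ (for instance $(P+a+1;Q+b)$ with $1,a_{1},a;b_{1}$, using the witness $(1,A_{2,|P|},a;B_{1,|P|},b)\in S_{0}$), each of which either yields the claim outright or upgrades one of the crossed pairs from membership in $Z$ to membership in $S_{0}$; only then does a final axiom application force $(P;Q)\in S_{0}$, contradicting incompleteness. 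Your axside-plus-axcross shortcut from the boundary-edge case omits this intermediate upgrading step, and without it the argument does not close.
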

\begin{proof}
First by Axiom \ref{axside} in $Z$, $(P;Q)\in Z$ and $(P+a-a_1;Q+b-b_1)\in Z$ implies that we either have our claim, or we have $(P+a-a_1;Q)\in Z$ and $(P;Q+b-b_1)\in Z$. We first apply Axiom \ref{axmid} to $1,a_1,a;b_1$ on the circular pair $(P+a+1,Q+b)$. We have $(P+1+a-a_1;Q+b)\in S$ by definition, and we also have $(P;Q+b-b_1)\in Z$, so this implies that we either have our claim, or we have $(P+1;Q+b)\in Z$. The latter then implies that $(P;Q+b-b_1)\in S$. A similar argument for Axiom \ref{axmid} on $a_1;1,b_1,b$ gives either our claim or that $(P+a-a_1;Q)\in S$. Then, Axiom \ref{axmid} implies that $(P;Q)\in S$, a contradiction. Thus, we have our claim.
\end{proof}

\begin{lemma}\label{PCPsp}
For any incomplete circular pair $(P;Q)$, there exists a circular pair $(P';Q')\in \mc{P}$ such that any electrical positroid $Z$ satisfying $S_{0}\cup\{(P;Q)\}\subset Z\subset S_{1}$ contains $(P;Q)$.
\end{lemma}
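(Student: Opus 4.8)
The statement to prove is Lemma \ref{PCPsp}: for any incomplete circular pair $(P;Q)$ in the BSP case, there exists a primary circular pair $(P';Q') \in \mc{P}$ such that any electrical positroid $Z$ with $S_0 \cup \{(P';Q')\} \subset Z \subset S_1$ contains $(P;Q)$. (The statement as typeset contains a typo: it should read $(P';Q')$ in the hypothesis rather than $(P;Q)$, mirroring Lemma \ref{PCPcont} in the BEP case.)

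The plan is to mirror the proof of Lemma \ref{PCPcont} from \S\ref{posproof}, transplanting it to the boundary spike setting. First, using the preceding lemma, I would reduce to the case where $(P;Q)$ is a \textbf{minimal} incomplete circular pair, i.e. the pair obtained by deleting the last connection is complete; this is legitimate because that lemma shows that completing the minimal pair inside $Z$ forces the completion of the larger one. Next, I would invoke the BSP-analogue of Lemma \ref{crossing} (which the text asserts holds) to get that consecutive pairs of indices in $E(P;Q)$ cross, so that the construction of $\mc{P}$ applies to $(P;Q)$, and then identify the (at most two, in fact exactly one relevant after the crossing-inequality dichotomy) primary circular pairs sharing the first $i$ connections with $(P;Q)$ — call the distinguished one $(P';Q')$, the \emph{primary circular pair associated to} $(P;Q)$.

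The core of the argument is a retrograde induction on $i$, the number of leading connections $(P;Q)$ shares with $(P';Q')$. The base case $i = k$ is handled by the preceding lemma, and the case where the shared connections already exhaust $(P';Q')$ is immediate from the Subset Axiom. For the inductive step, I would first need to show that replacing the $(i{+}1)$-st connection of $(P;Q)$ by that of $(P';Q')$ yields a circular pair still in $S_0$ — this uses Lemmas \ref{trip} and \ref{rtol} exactly as in the proof of Lemma \ref{PCPcont}, with the hypotheses of those lemmas supplied by the crossing relations from (the BSP version of) Lemma \ref{crossing}. Then, observing that this modified pair has the same associated primary circular pair and one more shared connection, the inductive hypothesis gives its completion in $Z$; a final application of Lemma \ref{ltol} (with the degenerate sub-cases where an index coincides handled by a parallel but simpler argument) propagates this back to $(P;Q)$, using Axioms \ref{axmid}, \ref{axleft}, \ref{axright} in place of their ``side/cross'' counterparts where the presence of the extra boundary vertex $1$ in the row set forces a $(N{+}1) \times N$ shape.

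The main obstacle I expect is bookkeeping: in the BSP case the asymmetry between $P$ and $Q$ (here it is the vertex $1$ sitting only on one side, rather than the edge $(n,1)$ touching both) means the Grassmann--Pl\"ucker relations of type (\ref{P2}) — hence Axioms \ref{axmid}, \ref{axleft}, \ref{axright} — must be used where (\ref{P1})-type axioms sufficed before, and one must check carefully that every invocation of Lemmas \ref{rtol}, \ref{ltol}, \ref{trip} still has its clockwise-order and membership hypotheses met after the substitution. I do not anticipate any genuinely new idea is needed beyond what already appears in \S\ref{posproof}; the lemma is ``the BSP shadow'' of Lemma \ref{PCPcont}, and the remaining lemmas of this appendix (the analogues of Lemmas \ref{unPCP}, \ref{contPCP}, Corollary \ref{uniclos}, and Lemma \ref{findclos}) will then assemble in the same way to finish the contradiction and complete the proof of Theorem \ref{circposthm}.
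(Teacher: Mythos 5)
Your proposal takes essentially the same route as the paper's own proof: the paper likewise inducts on the number of leading connections shared with the associated primary circular pair, establishes that the modified pair lies in $S_0$ via Lemmas \ref{crossing} and \ref{rtol} (with the Subset Axiom and Lemma \ref{walk} covering the small-$i$ cases), and propagates membership back exactly as in Lemma \ref{PCPcont}, with Axiom \ref{axcross} handling the $i=0$ case forced by the one-sided vertex $1$. Your reading of the misprint in the statement (the hypothesis should be $S_{0}\cup\{(P';Q')\}\subset Z$) is also the intended one.
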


\begin{proof}
Proceed by induction on $i$, where $i$ is such that the first $i$ connections (see Definition \ref{connset}) of $(P;Q)$ are the same as those of a primary circular pair. The base case may be handled similarly as in the proof of Lemma \ref{PCPcont}. Now, let $(R;T)=(A_{1,n};B_{1,n})$ be the primary circular pair such that if $(P;Q)=(C_{1,m};D_{1,m})$, then $a_i\le c_i, b_i\le d_i$ for all $i$; $(R;T)$ exists by an identical argument as in the proof of Lemma \ref{PCPcont}. Call $(R;T)$ the primary circular pair associated to $(P;Q)$. 

Recalling that $a_1=2,b_1=n$, we first need to show that $(2,A_{2,i+1}, C_{i+2,m}; 1, B_{2,i+1}, D_{i+2,m})\in S$. The same result replacing $(2,1)$ with $(1,n)$ would follow from an identical argument. By the definition of the $(1,2)$-BEP, we need to show that $(A_{2,i+1}, C_{i+2,m}; B_{2,i+1}, D_{i+2,m})\in S$. In the case that $i>2$, we do so by applying Lemmas \ref{crossing} and Lemma \ref{rtol}. In the case that $i=0$, we may apply the Subset Axiom. Finally, in the case that $i=1$, we may apply Lemma \ref{walk}.

We now claim that, if $(A;B)=(A_{1,i+1}, C_{i+2,m}; B_{1,i+1}, D_{i+2,m})\in Z$, then $(P';Q')\in Z$. The lemma will then follow, because $(A;B)$ and $(P;Q)$ are easily seen to have the same primary associated circular pair. If $i>0$, the proof of the claim is identical to that of \ref{PCPcont}, so assume that $i=0$. In this case, we have $(a_1,C_{2,m};b_1,D_{2,m}),(C_{1,m};1,D_{2,m})\in Z$. If $a_1\neq c_1$, Axiom \ref{axcross} implies that we have $(C_{1,m};b_1,D_{2,m})\in Z$. Then, as $(1,C_{2,m};D_{1,m})\in Z$, we are done by Axiom \ref{axcross}.
\end{proof}

\begin{lemma}\label{primaryBSP}
There is exactly one circular pair in $\mc{P}$ that does not lie in $S_{0}$, which we call the \textbf{$S_{0}$-primary circular pair.}
\end{lemma}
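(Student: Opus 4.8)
The statement to prove is Lemma \ref{primaryBSP}: there is exactly one circular pair in $\mc{P}$ that does not lie in $S_{0}$. This is the BSP-case analogue of Lemma \ref{unPCP}, and the plan is to mirror that proof closely, making the adjustments forced by the boundary-spike setting.

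First I would establish existence: $\mc{P}\setminus S_{0}$ is nonempty. Since $S_{0}$ fails the $1$-BSP, there is a circular pair $(P;Q)\in S_{0}$ together with $x,y$ such that $(P+x;Q+1),(P+1;Q+y)\in S_{0}$ but $(P+x;Q+y)\notin S_{0}$, so $(P+x;Q+y)$ is incomplete in the sense of the new definition. By Lemma \ref{PCPsp}, the primary circular pair associated to $(P+x;Q+y)$ must lie in any electrical positroid $Z$ with $S_{0}\cup\{(P+x;Q+y)\}\subset Z\subset S_{1}$; taking $Z$ to be the closure, this primary pair cannot be in $S_{0}$ (else $(P+x;Q+y)$ would already be forced into $S_{0}$, contradicting incompleteness). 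Hence $|\mc{P}\setminus S_{0}|\ge 1$.

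For uniqueness, I would argue by contradiction exactly as in Lemma \ref{unPCP}: suppose $\mc{P}\setminus S_{0}$ contains two distinct elements. Recalling that all primary circular pairs here begin with $a_1=2,b_1=n$ (because $S_{0}$ has all BEPs), write the two pairs so that they agree on their first $i-1$ connections and first differ at position $i$, say as $(A_{1,i-1},c,P;B_{1,i-1},d,Q)$ and $(A_{1,i-1},e,P';B_{1,i-1},f,Q')$ with, say, $f>d$. Using that the first pair is not in $S_{0}$ while the appropriate ``shrunk'' versions are (by the construction of $\mc{P}$ and the analogue of Lemma \ref{crossing}), an application of Axiom \ref{axside} forces $(A_{2,i-1},c,P;B_{1,i-1},Q)\in S_{0}$, hence $(A_{2,i-1},c;B_{1,i-1})\in S_{0}$ by the Subset Axiom; the symmetric argument on $e,f$ gives $(A_{1,i-1};B_{2,i-1},f)\in S_{0}$, and then Axiom \ref{axcross} yields $(A_{2,i-1},c;B_{2,i-1},f)\in S_{0}$. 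Since $f>d$, this contradicts the minimality built into the definition of $d$ in the construction of $\mc{P}$. Therefore $|\mc{P}\setminus S_{0}|=1$.

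The main obstacle I anticipate is bookkeeping rather than conceptual: one must check that the branching structure of the algorithm constructing $\mc{P}$ in the BSP case produces at most two primary pairs sharing any given prefix of connections, and that the extremality conditions used in the algorithm ($c$ chosen as small as possible counterclockwise, $d$ as large as possible) are precisely the ones contradicted above — this requires carefully re-reading how the construction of $\mc{P}$ transfers from \S\ref{posproof} to the spike setting, and verifying that the analogue of Lemma \ref{crossing} supplies exactly the membership facts needed to invoke Axioms \ref{axside} and \ref{axcross}. Once those compatibility points are pinned down, the proof is a routine transcription of the argument for Lemma \ref{unPCP}.
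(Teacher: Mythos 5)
Your proposal is correct and matches the paper's approach: the paper's proof of this lemma is literally the one-line remark that the argument is the same as that of Lemma \ref{unPCP}, and your transcription (existence from the failure of the $1$-BSP via the primary-pair lemmas, uniqueness via Axiom \ref{axside}, the Subset Axiom, and Axiom \ref{axcross} contradicting the extremal choice of $d$) is exactly that argument adapted to the spike setting, including the observation that primary pairs now begin with $a_1=2$, $b_1=n$.
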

\begin{proof}
The argument is the same to that of Lemma \ref{unPCP}.
\end{proof}

\begin{lemma}\label{spPCP}
For any incomplete circular pair $(P;Q)$, any electrical positroid $Z$ satisfying $S_{0}\cup\{(P;Q)\}\subset Z\subset S_{1}$ contains the $S_{0}$-primary circular pair.
\end{lemma}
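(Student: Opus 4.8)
\textbf{Plan for the proof of Lemma \ref{spPCP}.} The statement is the exact BSP analogue of Lemma \ref{contPCP}, so the plan is to mirror that proof, making the substitutions dictated by the BSP setup: instead of adjoining $(P+1;Q+n)$ to $S_{0}$ we adjoin an incomplete circular pair $(P;Q)$ itself, and ``completeness'' is now governed by the two witnesses $(A_{1,n};1,B_{2,n})$ and $(1,A_{1,n};B_{1,n})$ rather than by a single flanking pair. First I would set up the retrograde induction on $i$, where $i$ is the length of the longest common prefix of connections shared between $(P;Q)$ and some primary circular pair; by the Subset Axiom we may reduce to the case that $(P;Q)$ is a minimal incomplete pair, which makes the base case $i = k$ immediate (there $(P;Q)$ \emph{is} the $S_{0}$-primary pair, or contains it as an initial segment, by Lemma \ref{primaryBSP}).

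For the inductive step, write $(P;Q) = (C_{1,k};D_{1,k})$ and let $(R;T) = (A_{1,n};B_{1,n})$ be the primary circular pair associated to $(P;Q)$ as constructed in Lemma \ref{PCPsp}, with $a_{i+1}\le c_{i+1}$ and $b_{i+1}\le d_{i+1}$. The goal is to show $(P;Q)\in Z$ forces $(A_{1,i},c_{i+1},A_{i+2,k};B_{1,i},d_{i+1},B_{i+2,k})$-type pairs, and then the associated primary pair, into $Z$. I would first use Lemmas \ref{rtol} and \ref{trip} (whose hypotheses are supplied by the BSP-analogue of Lemma \ref{crossing}, already noted to hold) to push the $S_{0}$-membership of the relevant ``intermediate'' pair $(A_{1,i},c_{i+1},A_{i+2,k};B_{1,i},d_{i+1},B_{i+2,k})$ and its two completeness-witnesses; then I would run the same chain of applications of Axiom \ref{axmid} (together with Axioms \ref{axleft}/\ref{axright} and the Subset Axiom) used in Lemma \ref{contPCP}, taking care that at the outermost step — where the index $1$ is being inserted on the $P$-side and $n$ on the $Q$-side — the arguments to Axiom \ref{axmid} are chosen as $1, c_{i+1}, a_{i+1}, n$ and then $n, d_{i+1}, b_{i+1}, 1$, exactly as in the BEP case. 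When $a_{i+1} = c_{i+1}$ or $b_{i+1} = d_{i+1}$, one degenerate branch of each case collapses and the same conclusion follows from the shorter argument, as in Lemma \ref{contPCP}. The conclusion that $(P;Q)\in Z$ implies the intermediate pair and then the associated primary pair lie in $Z$, combined with the observation that $(P;Q)$ and the intermediate pair have the \emph{same} associated primary circular pair, closes the induction; since by Lemma \ref{primaryBSP} there is a unique primary pair outside $S_{0}$, that forced primary pair must be the $S_{0}$-primary circular pair.

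\textbf{Main obstacle.} The substantive difficulty is not the axiom-chasing per se but verifying that the BSP-analogue of the completeness bookkeeping is internally consistent: in Lemma \ref{contPCP} one repeatedly uses that certain pairs are \emph{not} in $S_{0}$ (deriving contradictions that force the needed memberships), and these non-membership facts came from $S_{0}$ failing the $(n,1)$-BEP. Here the corresponding non-membership facts must come from $S_{0}$ failing the $1$-BSP, and one must check that every place the BEP proof invoked ``$(\cdot)\notin S_{0}$ because it lacks the boundary connection $(1,n)$'' has a genuine BSP counterpart — e.g. that the witnesses $(A_{1,n};1,B_{2,n})$ and $(1,A_{1,n};B_{1,n})$ being in $S_{0}$ while $(A_{1,n};B_{1,n})$ is not is precisely the obstruction, and that no pair built during the induction accidentally acquires both witnesses inside $S_{0}$. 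I expect this to require re-deriving the degenerate-case analysis carefully, and it is the step where a subtle error in the BEP argument, if present, would propagate; everything else is a routine transcription of the already-established lemmas.
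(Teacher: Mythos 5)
Your skeleton matches the paper's: retrograde induction on the number of initial connections shared with the $S_0$-primary circular pair, with the shared-prefix cases handled by transcribing Lemma \ref{contPCP}. The genuine gap is exactly where you flag your ``main obstacle'' and leave it unresolved, because that is where the new content of the BSP case lives. In this setting the primary circular pairs all begin with the connection $(2,n)$ rather than $(1,n)$, so an incomplete pair $(P;Q)$ may share \emph{no} connections with the $S_0$-primary pair; the retrograde induction therefore bottoms out at $i=0$, a case with no counterpart in Lemma \ref{contPCP}, and it cannot be handled by re-running the Axiom \ref{axmid} chain. Your plan never treats this case (your base case is $i=k$ and your inductive step is a transcription of the $i\ge 1$ argument), and your guess that the needed non-membership facts ``come from $S_0$ failing the $1$-BSP'' is not how the paper obtains them.

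What the paper actually does at $i=0$, and what your proposal is missing, is the following short argument. Since $(P;Q)\notin S_0$, the Subset Axiom gives $(P+1;Q+n),(P+2;Q+1)\notin S_0$; and since every pair added in passing from $S_0$ to $S_1$ in the BSP construction avoids the vertex $1$, any circular pair involving vertex $1$ lies in $S_1$ only if it already lies in $S_0$. Hence $(P+1;Q+n),(P+2;Q+1)\notin Z$ for any $Z\subset S_1$. Feeding these two non-membership facts into Axiom \ref{axside} twice forces first $(2,C_{2,m};D_{1,m})\in Z$ and then $(2,C_{2,m};n,D_{2,m})\in Z$, i.e.\ a pair whose first connection agrees with the $S_0$-primary pair, after which the $i\ge 1$ machinery (your transcription of Lemma \ref{contPCP}) takes over, and Lemma \ref{primaryBSP} identifies the forced primary pair as the $S_0$-primary one. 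Supplying this structural observation about $S_1\setminus S_0$ and the two applications of Axiom \ref{axside} is the missing step; without it the induction has no way to terminate when $(P;Q)$ shares no connections with the $S_0$-primary pair.
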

\begin{proof}
Proceed by retrograde induction on $i$, where $i$ is such that the first $i$ connections of $(P;Q)$ are the same as those of the $S_{0}$-primary circular pair. If $i>0$, we can argue exactly as in the proof of Lemma \ref{contPCP}. Thus, assume that $i=0$.

Let $(R;T)=(A_{1,n}; B_{1,n})$ be the $S_{0}$-primary circular pair. By Lemma \ref{primaryBSP}, $(P;Q)=(C_{1,m}; D_{1,m})$ has the property that $a_i\le c_i, b_i\le d_i$ for all $i$. Also, by how the construction of $S_{1}$, for any circular pair $(C,D)$, $(C+1;D+n)\in S_{0} \Leftrightarrow (C+1;D+n)\in S_{1}$ and $(C+2;D+1)\in S_{0} \Leftrightarrow (C+2;D+1)\in S_{1}$. Therefore, it follows that $(P+1;Q+n),(P+2;Q+1)\notin Z$. Then, by Axiom \ref{axside}, $(2, C_{2,m}; D_{1,m}\in Z$, and another application of Axiom \ref{axside} yields $(2, C_{2,m}; n,D_{2,m})\in Z$, completing the proof.
\end{proof}

\begin{lemma}\label{uniclossp}
For any two incomplete circular pairs $(P;Q)$ and $(P';Q')$ any electrical positroid $Z$ containing $S$ and contained in $S'$ with $(P;Q)$ must contain $(P';Q')$.
\end{lemma}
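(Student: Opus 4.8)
\textbf{Proof proposal for Lemma \ref{uniclossp}.} The plan is to mirror exactly the structure used to prove Corollary \ref{uniclos} in the boundary-edge case, now using the BSP-versions of the intermediate lemmas already established in this appendix. The statement asserts that the set of circular pairs one is forced to add, in order to ``complete'' any single incomplete circular pair, does not depend on which incomplete pair one starts with; equivalently, any electrical positroid $Z$ with $S_{0}\cup\{(P;Q)\}\subset Z\subset S_{1}$ automatically contains $(P';Q')$ for every incomplete $(P';Q')$.

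First I would invoke Lemma \ref{spPCP}: since $(P;Q)$ is incomplete and lies in $Z$ (together with $S_{0}$), the positroid $Z$ must contain the $S_{0}$-primary circular pair, which is well-defined and unique by Lemma \ref{primaryBSP}. This step is purely a citation of what has just been proved. Next, I would apply Lemma \ref{PCPsp} in the other direction: for the second incomplete pair $(P';Q')$, there is a primary circular pair $(P'';Q'')\in\mc{P}$ such that any electrical positroid $Z$ with $S_{0}\cup\{(P'';Q'')\}\subset Z\subset S_{1}$ contains $(P';Q')$. Because $(P'';Q'')$ is a primary circular pair lying outside $S_{0}$ only in the single exceptional case, and by Lemma \ref{primaryBSP} the only primary pair outside $S_{0}$ is the $S_{0}$-primary pair, we conclude $(P'';Q'')$ either lies in $S_{0}\subset Z$ already or equals the $S_{0}$-primary pair, which we have shown lies in $Z$. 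In both cases $(P'';Q'')\in Z$, so $S_{0}\cup\{(P'';Q'')\}\subset Z\subset S_{1}$, and Lemma \ref{PCPsp} then yields $(P';Q')\in Z$, as desired.

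The argument is therefore a two-line chaining of Lemmas \ref{spPCP}, \ref{primaryBSP}, and \ref{PCPsp}, completely parallel to the proof of Corollary \ref{uniclos}. The only point requiring a little care — and the step I would expect to be the main (minor) obstacle — is the bookkeeping needed to see that the primary pair $(P'';Q'')$ supplied by Lemma \ref{PCPsp} for $(P';Q')$ is indeed forced into $Z$: one must check that $(P'';Q'')$ is either in $S_{0}$ or is the $S_{0}$-primary pair, using Lemma \ref{primaryBSP} to rule out any other primary pair lying outside $S_{0}$. Once this is observed the proof closes immediately.

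\begin{proof}
By Lemma \ref{spPCP}, $Z$ contains the $S_{0}$-primary circular pair. Now let $(P';Q')$ be an arbitrary incomplete circular pair. By Lemma \ref{PCPsp}, there exists a primary circular pair $(P'';Q'')\in\mc{P}$ such that every electrical positroid $W$ with $S_{0}\cup\{(P'';Q'')\}\subset W\subset S_{1}$ contains $(P';Q')$. By Lemma \ref{primaryBSP}, the only element of $\mc{P}$ not contained in $S_{0}$ is the $S_{0}$-primary circular pair, so $(P'';Q'')$ is either an element of $S_{0}$ or is the $S_{0}$-primary circular pair. In either case $(P'';Q'')\in Z$, since $S_{0}\subset Z$ and, by the previous paragraph, the $S_{0}$-primary circular pair lies in $Z$ as well. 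Hence $S_{0}\cup\{(P'';Q'')\}\subset Z\subset S_{1}$, and applying Lemma \ref{PCPsp} with $W=Z$ gives $(P';Q')\in Z$, as desired.
\end{proof}
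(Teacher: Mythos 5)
Your proof is correct and takes essentially the same route as the paper: the paper's own proof is the same two-step chaining of Lemmas \ref{spPCP} and \ref{PCPsp} (whose citation order there appears to be swapped by a typo), relying implicitly on the uniqueness statement of Lemma \ref{primaryBSP}. You simply make explicit the bookkeeping that the primary pair supplied for $(P';Q')$ lies either in $S_{0}$ or equals the $S_{0}$-primary pair, and hence lies in $Z$.
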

\begin{proof}
By Lemma \ref{PCPsp},  $Z$ contains the primary circular pair. The claim then follows by Lemma \ref{spPCP}.
\end{proof}

\begin{lemma} Let $T=S_{0}\cap S'_{0}$. Then, $T$ is an electrical positroid.
\end{lemma}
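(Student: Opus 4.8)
The plan is to establish that $T = S_0 \cap S_0'$ satisfies all eight electrical positroid axioms. Axioms \ref{axsub} (Subset) and \ref{axempty} are immediate: $\epair$ lies in both $S_0$ and $S_0'$, and if $(P;Q) \in S_0 \cap S_0'$ then $(P-a_i; Q-b_i)$ lies in each of $S_0, S_0'$ separately since each is an electrical positroid, hence in $T$. The content is in the first six axioms, which are the Grassmann--Pl\"ucker-type axioms. The key structural fact to exploit, established in the discussion preceding this lemma (and mirroring Lemma~\ref{findclos}), is that $S_0$ and $S_0'$ differ from their common ``core'' only in circular pairs involving the specific boundary structure at vertex $1$: recall $S_0'$ was obtained from $S_0$ by contracting/adjusting the boundary spike at $1$, so every circular pair in the symmetric difference $S_0 \triangle S_0'$ involves the vertex $1$ in a controlled way (it is of the form related to $(P+2;Q+1)$ or $(P+1;Q+n)$ constructions). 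I would first isolate this: show that if a circular pair $(R;T)$ does \emph{not} involve vertex $1$ in the relevant position, then $(R;T) \in S_0 \iff (R;T) \in S_0' \iff (R;T) \in S_1$, so membership in $T$ for such pairs is automatic.

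The main work is then a case analysis for each of Axioms \ref{axside}--\ref{axright}. Each axiom has the shape ``$\mc{A}, \mc{B} \in T \Rightarrow \mc{C}, \mc{D} \in T$'' (possibly with a disjunction on the conclusion). Given $\mc{A}, \mc{B} \in T$, we have $\mc{A},\mc{B} \in S_0$ and $\mc{A},\mc{B}\in S_0'$, so since each is an electrical positroid, the conclusion holds in $S_0$ and holds in $S_0'$. The issue is only when the disjunctive conclusions are witnessed by \emph{different} disjuncts in $S_0$ versus $S_0'$ — then neither disjunct is forced to lie in $T$. For the non-disjunctive axioms (\ref{axcross}, \ref{axsep}, \ref{axleft}, \ref{axright}) there is no such issue and we are done immediately. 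For the two disjunctive axioms (\ref{axside} and \ref{axmid}), I would argue as follows: if the disjunct chosen by $S_0$ does not involve vertex $1$ in the problematic position, then by the isolation step above it lies in $S_0'$ too, hence in $T$, and symmetrically. So the only genuinely delicate situation is when the $S_0$-witness and the $S_0'$-witness are each a pair genuinely touching vertex $1$; here I would use the explicit description of $T_0$ and the relations between $S_0, S_0', S_1$ (in particular that $(C+1;D+n)\in S_0 \iff (C+1;D+n)\in S_1$ and $(C+2;D+1)\in S_0 \iff (C+2;D+1)\in S_1$, as already recorded in the proof of Lemma~\ref{spPCP}) together with the already-proven Lemma~\ref{uniclossp} / Corollary-type closure statements to force one common disjunct into both $S_0$ and $S_0'$.

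Concretely, for Axiom \ref{axside} with hypotheses $(P-a;Q-c),(P-b;Q-d) \in T$: if $1 \notin P$ or $1$ is not in the relevant extremal position, the four conclusion pairs are insensitive to the $S_0$ vs.\ $S_0'$ distinction and we conclude directly; otherwise we run the casework of Lemma~\ref{findclos} essentially verbatim, replacing ``the algorithm adds to $T$'' with ``the conclusion already holds in both $S_0$ and $S_0'$,'' using the closure results Lemma~\ref{PCPsp}, Lemma~\ref{spPCP}, and Lemma~\ref{uniclossp} to pin down a common witness. Axiom \ref{axmid} is handled identically, using Axiom \ref{axsep} to transfer a witness as in the $a = 1, d \neq n$ case of Lemma~\ref{findclos}. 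I expect the main obstacle to be precisely bookkeeping the boundary-spike analogue of the case analysis: verifying that in every subcase where $S_0$ and $S_0'$ could a priori disagree on which disjunct to take, the extra membership facts about $S_1$ and the uniqueness of the $S_0$-primary (and $S_0'$-primary) circular pair force agreement. Once this is checked, all six Grassmann--Pl\"ucker axioms hold for $T$, and with the trivial Subset and empty axioms, $T$ is an electrical positroid, completing the proof (and hence, via $\pi(G_0) = T = S_0 = S_0'$, the contradiction that finishes Theorem~\ref{circposthm} in the BSP case).
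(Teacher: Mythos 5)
Your reduction to the two disjunctive axioms is sound and agrees with the paper: Axioms \ref{axsub} and \ref{axempty} are immediate, and for Axioms \ref{axcross}, \ref{axsep}, \ref{axleft}, \ref{axright} the (unique) conclusion is forced in each of $S_{0}$ and $S'_{0}$ separately, hence lies in $T$. The problem is how you propose to handle Axioms \ref{axside} and \ref{axmid}, which is where the entire content of the lemma sits. Your ``isolation'' step is not correct as stated: by construction $S_{1}\setminus S_{0}$ consists of pairs $(P+x;Q+y)$ with $1\notin (P+x)\cup(Q+y)$, so it is exactly on pairs \emph{not} containing the vertex $1$ that $S_{0}$ and $S_{1}$ (and a priori $S_{0}$ and $S'_{0}$) may disagree; the fact recorded in the proof of Lemma \ref{spPCP} goes the other way (pairs containing $1$ lie in $S_{0}$ iff they lie in $S_{1}$). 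The structural input actually needed, and used in the paper, is that $S_{0}$ and $S'_{0}$ can differ only on pairs $(P;Q)$ with $1\notin P\cup Q$ such that $(P-a_{1}+1;Q)$ and $(P;Q-b_{1}+1)$ lie in $S_{0}\cap S'_{0}$.

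More seriously, the tools you invoke to ``pin down a common witness'' --- Lemmas \ref{PCPsp}, \ref{spPCP}, \ref{uniclossp} --- do not apply at this point: each of them concerns an electrical positroid $Z$ with $S_{0}\cup\{(P;Q)\}\subseteq Z\subseteq S_{1}$ for some incomplete pair $(P;Q)$, and neither $T=S_{0}\cap S'_{0}$ nor $S'_{0}$ is known to contain $S_{0}$ here; that containment is essentially what the appendix is trying to prove, and Lemma \ref{uniclossp} is the tool applied \emph{after} the present lemma, once $T$ is known to be an electrical positroid. What the paper actually does, and what your sketch leaves out, is a direct case analysis on the positions of $a,b,c,d$ relative to $1,a_{1},a_{2},b_{1}$: in each case one shows that if one of $S_{0},S'_{0}$ contains a given disjunct of the conclusion, then either the other does as well, or the other contains the ``shifted'' pair obtained by the replacement above, and an explicit application of Axiom \ref{axcross}, \ref{axleft}, or \ref{axsep} (together with the Subset Axiom) then transfers membership, forcing the two positroids onto a common disjunct. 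Asserting that the casework of Lemma \ref{findclos} goes through ``essentially verbatim'' with the primary-circular-pair closure lemmas leaves precisely this substantive verification missing, so the proposal has a genuine gap.
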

\begin{proof}
The proof follows the same outline as that of Lemma \ref{findclos}; here, we verify that $T$ satisfies each electrical positroid axiom. By construction, $S_{0}$ and $S'_{0}$ only differ in the circular pairs $(P;Q)$ for which $(P-a_1+1;Q),(P;Q-b_1+1)\in S_{0}\cap S'_{0}$. In particular, $1\neq P,Q$. $T$ is easily seen to satisfy the electrical positroid axioms other than \ref{axside} and \ref{axmid}. 

We first consider Axiom \ref{axside}: suppose that $(P-a;Q-c),(P-b;Q-d)\in T$; we show that either $(P-a;Q-d),(P-b;Q-c)\in T$ or $(P;Q),(P-a-b;Q-c-d)\in T$. We have the following cases:

\begin{itemize}
\item
$a,c\neq 1$. Suppose that $(P-a;Q-d)\in S'_{0}$. Then, either $(P-a;Q-d)\in S_{0}$ or $(P-a;Q-b_1-d+1)\in S_{0}$. Axiom \ref{axcross} applied to $(P;Q-d+1)$ with $a,b,1,b_1$ gives $(P-a;Q-d)\in S_{0}$. Similarly, the roles of $S'_{0}$ and $S_{0}$ may be swapped, and we may apply the same argument with $(P-b;Q-c)$.  Thus, either $S_{0}$ and $S'_{0}$ both contain $(P-a;Q-d)$ and $(P-b;Q-c)$ or both do not, in which case they both contain $(P;Q)$ and $(P-a-b;Q-c-d)$.

\item
$a=1, b\neq a_2$. Suppose that $(P-a;Q-d)\in S'_{0}$ and $(P-b;Q-c)\in S'_{0}$. Then, $(P-b;Q-c)\in S_{0}$. If $(P-a;Q-d)\in S_{0}$, we are done. Otherwise, if $(P-a;Q-d)\notin S_{0}$, as $S_{0}$ is an electrical positroid, we find $(P;Q)\in S_{0}$ and $(P-a-b;Q-c-d)\in S_{0}$. Then, we must have $(P;Q)\in S'_{0}$, in which case we are done, or either $(P-1-b;Q-c-d)\in S'_{0}$ or $(P-a_2-b;Q-c-d)\in S'_{0}$. In the latter case, Axiom \ref{axleft} applied to $(p;Q-d)$ with $1,a_2,b,c$ gives that $(P-1-b;Q-c-d)\in S'_{0}$. Thus, $S_{0}$ and $S'_{0}$ contain $(P-a-b;Q-c-d)$ and $(P;Q)$, so we are done in this case as well.

\item
The cases $a=1, b=a_2,c\neq b_1$ and $a=1,b=a_2,c=b_1$ are handled by similar logic; the details are omitted. The case $c=1$ is symmetric with $a=1$.
\end{itemize}

We now consider Axiom \ref{axmid}: suppose that $(P-b;Q),(P-a-c;Q-d)\in T$; we show that either $(P-a;Q),(P-b-c;Q-d)\in S$ or $(P-c;Q),(P-a-b;Q-d)\in T$. We have the following cases:

\begin{itemize}
\item
$a,d\neq 1, d\neq b_1$. If $(P-a;Q)\in S'_{0}$, then either $(P-a;Q)\in S_{0}$ or $(P-a;Q-b_1+1)\in S_{0}$. Then, an application of Axiom \ref{axcross} to $(P;Q+1)$ with $a,b,1,b_1$ yields that $(P-a;Q)\in S_{0}$. The same argument holds if $(P-a;Q)\in S_{0}$ to show that $(P-a;Q)\in S'_{0}$ does as well. Now, suppose $(P-b-c;Q-d)\in S'{0}$. Then, either $(P-b-c;Q-d)\in S_{0}$, which case we are done, or $(P-a_1-b-c+1;Q-d)\in S_{0}$. Then, the Subset Axiom, $(P-a_1-b-c;Q-b_1-d)\in S{0}$. Applying Axiom \ref{axsep} to $(P-b;Q)$ with $(a_1,c,b_1,d)$ then yields that $(P-b-c;Q-d)\in S_{0}$, as desired. The same argument holds if we swap the roles of $S_{0}$ and $S'_{0}$.

\item
$a\neq 1, d= b_1$. If $(P-a;Q)\in S'_{0}$, by the same argument as with the case $a,d\neq1,d\neq b_{1}$, we have $(P-a;Q)\in S_{0}$. If $(P-b-c;Q-d)\in S'_{0}$ as well, then either $(P-b-c;Q-d)\in S_{0}$, in which case we are done, or $(P+1-a_1-b-c;Q-d)\in S_{0}$. In the latter case, because $S_{0}$ is an electrical positroid, $(P-c;Q)\in S_{0}$. Then, applying Axiom \ref{axleft} to $(P+1-c;Q)$ with $(1,a_1,b,d)$ gives that $(P-b-c;Q-d)\in S_{0}$, so we are done.

\item
The cases $a=1$ and $d=1$ are handled with similar logic; the details are omitted.
\end{itemize}
We have exhausted all cases, so the proof of the lemma is complete.
\end{proof}

Now, by Lemma \ref{uniclossp}, we have $S_{0}=S'_{0}$, so the proof of Theorem \ref{circposthm} is complete.

\section{Mutation at limiting minors in $\mc{LM}_{n}$}\label{nonplucker}

\numberwithin{equation}{section}

Recall that, to complete the proof of Theorem \ref{lpalg}, we need an additional technical result, which we state and prove here.

\begin{proposition} \label{mutlimit}
From the initial cluster of $\mc{LM}_n$, mutating at a non-frozen limiting solid circular pair $(P;Q)$ gives a new cluster variable which is a polynomial in the entries of $M$, and relatively prime to $(P;Q)$.
\end{proposition}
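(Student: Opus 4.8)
## Proof proposal for Proposition \ref{mutlimit}

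The plan is to unwind the definition of the exchange polynomial at a limiting vertex and identify the resulting new variable explicitly. Recall from Definition \ref{deflmn} that in $U_n$ the edges incident to a limiting circular pair $(P;Q)$ consist of the usual ``Grassmann--Pl\"ucker'' edges coming from a substitution into (\ref{P1}), together with the extra edge connecting $(P;Q)$ to the other limiting pair whose size differs by $1$. After choosing the alternating orientation, the exchange polynomial $F_{(P;Q)}$ is a binomial of the form $F_{(P;Q)} = x_{(R_1;S_1)} x_{(R_2;S_2)} + x_{(R_3;S_3)} x_{(R_4;S_4)}$, where one of the two monomials is the ``genuine'' right-hand side of a (\ref{P1}) relation and the other monomial involves the neighboring limiting pair (this is exactly the shape exhibited in the example immediately after Definition \ref{deflmn}, namely $x_{(45;18)}x_{(12;65)}+x_{(1;5)}x_{(812;654)}$ for $n=8$). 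So the first step is to write down, for general $n$, the three limiting circular pairs (of sizes $k-1,k,k+1$ where $k$ is half-ish of $n$) and the precise form of $F_{(P;Q)}$; this is bookkeeping with the definitions of \textbf{limiting}, \textbf{picked}, and the statistics $d_1,d_2$.

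The second step is the heart of the matter: to show the new variable $x' = F_{(P;Q)}/x_{(P;Q)}$ is a \emph{polynomial} in the entries of $M$. Since $F_{(P;Q)}$ is a sum of products of circular minors, it suffices to exhibit a polynomial identity of the form $x_{(P;Q)} \cdot (\text{polynomial in entries of } M) = F_{(P;Q)}$. I would produce this identity by applying the Grassmann--Pl\"ucker relation (\ref{P1}) (or possibly (\ref{P2})) to an auxiliary $(n+1)\times n$ or $(n+1)\times(n+1)$ configuration of rows and columns of $M$ chosen so that the ``extra'' index is one that wraps around the boundary circle. Concretely: the pair $(812;654)$ in the $n=8$ example is $(\widetilde{Q};\widetilde{P})$ for a \emph{solid} pair once we relabel using the convention $(P;Q)=(\widetilde Q;\widetilde P)$ — so the monomial $x_{(1;5)}x_{(812;654)}$ is really a product of two honest circular minors, and (\ref{P1}) applied to the appropriate $4$-element row/column choice rewrites $x_{(P;Q)}\,x' $ as that product plus $x_{(45;18)}x_{(12;65)}$. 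Carrying this out in general requires identifying which four indices $a,b,c,d$ (in clockwise order) to feed into (\ref{P1}), with $x'$ appearing as the minor $\Delta$ on a sublattice that is \emph{not} a circular pair (its rows/columns are ``split'' across the point where $d_1$ and $d_2$ are unequal). The key claim is then that this $\Delta$, being an ordinary minor of the integer matrix $M$, is automatically a polynomial; no division is needed once the identity is established, because the identity expresses $F_{(P;Q)}$ as $x_{(P;Q)}$ times $\Delta$ directly.

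The third step is coprimality of $x'$ with $x_{(P;Q)}$. Both are minors (or, for $x'$, a signed sum-of-products that we have just identified with a single minor $\Delta$) of a matrix of distinct indeterminates; by the classical irreducibility of such minors — the same fact invoked in the proof of Theorem \ref{lpalg} — each is irreducible, and two distinct irreducible minors are coprime provided they are not equal up to scalar. So it remains only to check $x' \neq c\cdot x_{(P;Q)}$, which is clear since they involve different sets of matrix entries (the row or column index sets genuinely differ), e.g. by comparing which variables $M_{ij}$ occur. I expect the main obstacle to be step two — specifically, pinning down the correct Grassmann--Pl\"ucker substitution in full generality (keeping careful track of the $(P;Q)=(\widetilde Q;\widetilde P)$ identification, the parity of $n$, and which of the two monomials of $F_{(P;Q)}$ is ``solid'') so that $x'$ is manifestly a minor of $M$. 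Once the identity $x_{(P;Q)}\,\Delta = F_{(P;Q)}$ with $\Delta$ an honest minor is in hand, polynomiality and coprimality both follow immediately from irreducibility of minors.
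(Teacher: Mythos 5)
Your step two contains the fatal gap: you claim that after the right Grassmann--Pl\"ucker substitution the new variable $x'=F_{(P;Q)}/x_{(P;Q)}$ is ``an honest minor $\Delta$ of $M$,'' and then you hang both polynomiality and coprimality on irreducibility of minors. This is false, and it is precisely the point of the paper's appendix that mutation at a limiting pair does \emph{not} replace a minor by another minor. In the paper's computation, with ground set $(I;J)=\bigl(\tfrac n2,\dots,\tfrac n2+k;\,2,1,n,\dots,n-k+1\bigr)$ and the indicated $b,c,d,e,f,g$, the limiting variable is $\Delta^{c,dg}$ and the new cluster variable works out to
\[
\Pi \;=\; \Delta^{b,de}\cdot\Delta^{c,fg}\;-\;\Delta^{b,fg}\cdot\Delta^{c,de},
\]
a \emph{difference of products} of two $k\times k$ minors, hence of degree $2k$ in the entries of $M$. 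Any single minor supported on the rows $I$ (there are only $k+1$ of them) has degree at most $k+1<2k$ for $k\ge 2$, so no identity of the form $x_{(P;Q)}\cdot(\text{single minor})=F_{(P;Q)}$ can exist; your proposed reduction to a Pl\"ucker substitution producing a lone minor $\Delta$ cannot be carried out. Polynomiality has to be established instead by verifying the identity $F_{(P;Q)}=\Delta^{c,dg}\cdot\Pi$ directly.

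Consequently your step three also collapses: since $x'=\Pi$ is not a minor, you cannot invoke irreducibility of minors and ``distinct irreducibles are coprime.'' The paper's coprimality argument is different in kind: $\Delta^{c,dg}$ is irreducible, so it suffices to show it does not divide $\Pi$, and this is done by a support argument on the variables from row $b$ --- every monomial of $\Delta^{c,dg}$ contains exactly one variable $x_{bz}$ with $z\notin\{2,n-k+1\}$, whereas $\Pi$ has surviving terms containing $x_{b2}$. Some such divisibility argument (or an explicit factorization analysis of $\Pi$) is unavoidable; as written, your proposal has no mechanism to rule out $\Delta^{c,dg}\mid\Pi$ once the ``$x'$ is a minor'' claim is withdrawn.
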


\begin{proof}
Consider the limiting solid circular pair $(P;Q)$ of size $k$. Fix the ground set 
\[ (I;J) = \left( \frac{n}{2}, \frac{n}{2}+1,\dots,\frac{n}{2}+k; 2,1,n,n-1,\ldots,n-k+1\right), \]
so that $\Delta$ denotes the determinant of the submatrix of $M$ with rows indexed by $I$ and columns indexed by $J$. Furthermore, let
\[ b=\frac{n}{2}+k-1,~ c = \frac{n}{2}+k,~ d=2,~ e=1,~ f=n-k+2,~ g=n-k+1.  \]
Then, the cluster variable associated to the vertex $(P;Q)$ is $\Delta^{c,dg}$, and its corresponding exchange polynomial in the initial seed of $\mc{LM}_{n}$ is
\[ \Delta^{\emptyset,d} \cdot \Delta^{c,fg} \cdot \Delta^{bc,deg} + \Delta^{\emptyset,g} \cdot \Delta^{c,de} \cdot \Delta^{bc,dfg}.\]
The new cluster variable from mutating at $(P;Q)$ is
\[ \frac{ \Delta^{\emptyset,d} \cdot \Delta^{c,fg} \cdot \Delta^{bc,deg} + \Delta^{\emptyset,g} \cdot \Delta^{c,de} \cdot \Delta^{bc,dfg} }{\Delta^{c,dg}} = \Delta^{b,de} \cdot \Delta^{c,fg} - \Delta^{b,fg} \cdot \Delta^{c,de},\]
where the last equality may be checked directly. We wish to show that $\Delta^{b,de} \cdot \Delta^{c,fg} - \Delta^{b,fg} \cdot \Delta^{c,de}$ is relatively prime to $\Delta^{c,dg}$, which is irreducible, so it is enough to check that $\Delta^{c,dg}$ does not divide $\Pi=\Delta^{b,de} \cdot \Delta^{c,fg} - \Delta^{b,fg} \cdot \Delta^{c,de}$.

Let us outline the argument. If it is the case that $\Delta^{c,dg}$ divides $\Pi$, then each term in the expansion of $\Pi$ must be divisible by a monomial in the expansion of $\Delta^{c,dg}$. However, we claim that this cannot be true. Indeed, any monomial in the expansion of $\Delta^{c,dg}$ contains exactly one factor of a variable $x_{bz}$ in the row of $M$ corresponding to $b$, and the column of $M$ corresponding to $z\neq2,n-k+1$. However, it is easily checked that there are terms of $\Pi$, after expanding and collecting like terms, containing variables $X_{bz}$ with $z=2$, so the claim is established. 
\end{proof}

\end{document}